\documentclass[12pt]{amsart}
\usepackage{amsmath,amssymb,latexsym,cancel,rotating}

\textwidth15.1cm \textheight21cm \headheight12pt
\oddsidemargin.4cm \evensidemargin.4cm \topmargin0.5cm

\addtolength{\marginparwidth}{-13mm}

\numberwithin{equation}{section}

\newtheorem{theorem}{Theorem}[section]
\newtheorem{proposition}[theorem]{Proposition}
\newtheorem{conjecture}[theorem]{Conjecture}
\newtheorem{corollary}[theorem]{Corollary}
\newtheorem{lemma}[theorem]{Lemma}

\theoremstyle{definition}

\newtheorem{remark}[theorem]{Remark}

\newtheorem{definition}[theorem]{Definition}

\input xy
\xyoption{poly}
\xyoption{2cell}
\xyoption{all}

\def\ZZ{\mathbb{Z}}

\def\QQ{\mathbb{Q}}

\def\Acal{\mathcal{A}}

\def\Fcal{\mathcal{F}}

\def\Pcal{\mathcal{P}}
\def\Qcal{\mathcal{Q}}

\def\Tcal{\mathcal{T}}
\def\Xcal{\mathcal{X}}

\def\Fbb{\mathbb{F}}
\def\Sbb{\mathbb{S}}

\def\dim{\text{dim}}

\def\gcd{\text{gcd}}
\def\Hom{\text{Hom}}
\def\Ext{\text{Ext}}

\def\half{\frac{1}{2}}

\renewcommand{\eqref}[1]{{\rm (\ref{#1})}}

\begin{document}

\title[On Quantum Analogue of The Caldero-Chapoton Formula]
{On Quantum Analogue of The Caldero-Chapoton Formula}

\author{Dylan Rupel}
\address{\noindent Department of Mathematics, University of Oregon,
 Eugene, OR 97403}
\email{drupel@uoregon.edu}

\date{April 15, 2010}

\maketitle

\section{Introduction and Main Results}

Quantum cluster algebras  were introduced by Berenstein and Zelevinsky in
\cite{berzel} as a noncommutative analogue of cluster algebras (see \cite{ca2})
to lay the groundwork for a study of the canonical basis.  A quantum cluster
algebra $\Acal_q(Q)$ of rank $n$ is generated by a (possibly
infinite) set of generators called the \textit{cluster variables} inside an
ambient skew-field $\Fcal_q$.  The goal of this paper is to explicitly
compute all cluster variables for a large class of quantum cluster algebras.

We start with rank $2$ quantum cluster algebras.  Let $q$ be a formal variable.  Let $\Tcal_{q}$ be the
$2$-dimensional quantum torus, i.e.,
$\Tcal_{q}=\ZZ[q^{\pm1/2}]\langle X_1^{\pm1}, X_2^{\pm1}: X_1X_2=qX_2X_1\rangle$ and
let ${\mathcal F}_{q}$ be the skew field of fractions of $\Tcal_{q}$.  For
$b,c\in\ZZ_{>0}$ define the quantum cluster algebra $\Acal_{q}(b,c)$ to be the
$\ZZ[q^{\pm1/2}]$-subalgebra of ${\mathcal F}_{q}$ generated by the cluster variables
$X_k$, $k\in\ZZ$, defined recursively by 
\begin{equation}\label{eq: clRel}
X_{m-1}X_{m+1}=
\begin{cases}q^{b/2}X_m^b+1 & \text{ if $m$ is odd}\\ q^{c/2}X_m^c+1 & \text{ if
$m$ is even.}\\\end{cases}
\end{equation}
In what follows we will routinely specialize $q$ to a positive real number, usually the size of a finite field.  The quantum Laurent phenomenon (\cite[Corollary 5.2]{berzel}) implies that each
$X_k$, in fact, belongs to the subring $\Tcal_{q}$ of $\Fcal_{q}$ and thus $\Acal_q(b,c)$ is contained in $\Tcal_{q}$. However, the explicit computation of each $X_k$ as a Laurent polynomial in $X_1$ and $X_2$ is a non-trivial task.

We will use the following notation throughout the paper.  Define $X^{(a_1,a_2)}\in\Tcal_{q}$ by the formula $X^{(a_1,a_2)}:=q^{-\half a_1a_2}X_1^{a_1}X_2^{a_2}$.  Also define the symmetrized quantum binomial coefficient 
\begin{equation}\label{eq:sym-bin}
{n \brack r}_q:=\frac{(q^n-q^{-n})(q^{n-1}-q^{-n+1})\cdots(q^{n-r+1}-q^{-n+r-1})}{(q^r-q^{
-r})\cdots(q-q^{-1})}.
\end{equation}
The following result shows that all cluster variables $X_k$ for $b=c=2$ are computable combinatorially.  
\begin{proposition}
\label{th:XZ-formula}
For every $n \geq 0$, we have in $A_q(2,2)$:
\begin{eqnarray}
&\label{eq:X-n-formula}X_{-n} &\textstyle  =  X^{(n+2,-n-1)} + \sum_{p + r \leq
n} {n-r \brack p}_q{n+1-p \brack r}_q X^{(2r-n,2p-n-1)};\\
&\label{eq:Xn-formula}X_{n+3} &\textstyle  =  X^{(-n-1,n+2)} + \sum_{p + r \leq
n} {n-r \brack p}_q{n+1-p \brack r}_q X^{(2p-n-1,2r-n)}.
\end{eqnarray}
\end{proposition}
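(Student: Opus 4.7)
The plan is to prove \eqref{eq:X-n-formula} by induction on $n\geq 0$, and then to prove \eqref{eq:Xn-formula} by the identical inductive scheme run along the opposite half of the exchange pattern. Since $b=c=2$, every exchange relation in $\Acal_q(2,2)$ takes the uniform shape $X_{m-1}X_{m+1}=qX_m^2+1$, so the same combinatorial step that carries us from $(X_{-n+1},X_{-n})$ to $X_{-n-1}$ on one side will, mutatis mutandis, carry us from $(X_{n+2},X_{n+3})$ to $X_{n+4}$ on the other.

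I would dispatch the base cases $n=0,1$ by direct substitution: for $n=0$ the proposed formulas yield $X_0=qX_1^2X_2^{-1}+X_2^{-1}$ and $X_3=qX_1^{-1}X_2^2+X_1^{-1}$, which are the unique Laurent solutions of $X_0X_2=qX_1^2+1$ and $X_1X_3=qX_2^2+1$, respectively; the case $n=1$ is a slightly longer but equally routine check against $X_{-1}X_1=qX_0^2+1$ and $X_2X_4=qX_3^2+1$.

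For the inductive step of \eqref{eq:X-n-formula}, I would let $Y_{-n-1}$ denote the right-hand side of \eqref{eq:X-n-formula} with $n$ replaced by $n+1$ and verify
$$X_{-n+1}\,Y_{-n-1}\;=\;qX_{-n}^2+1,$$
expanding $X_{-n+1}$ and $X_{-n}$ via the inductive hypothesis. Uniqueness of the Laurent solution to the exchange relation then forces $Y_{-n-1}=X_{-n-1}$, closing the induction. Both sides are computed monomial-by-monomial inside $\Tcal_q$ by means of the commutation rule $X^{\alpha}X^{\beta}=q^{(\alpha_1\beta_2-\alpha_2\beta_1)/2}X^{\alpha+\beta}$; the specific parametrization $X^{(2r-n,2p-n-1)}$ in \eqref{eq:X-n-formula} is arranged so that the Laurent monomials on the two sides range over the same lattice region, reducing the equality to coefficient-by-coefficient comparison in each $X^{(\gamma,\delta)}$.

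The main obstacle, and the source of most of the labor, is verifying the resulting family of identities among products of symmetrized $q$-binomial coefficients. After absorbing the symplectic twist and the $q^{-\half a_1a_2}$ normalization of $X^{(a_1,a_2)}$, I expect each coefficient-matching identity to reduce to a combination of the two symmetrized $q$-Pascal rules
$${n\brack r}_q\;=\;q^{-(n-r)}{n-1\brack r-1}_q+q^{r}{n-1\brack r}_q\;=\;q^{n-r}{n-1\brack r-1}_q+q^{-r}{n-1\brack r}_q$$
together with a $q$-Chu-Vandermonde type convolution applied to the double product ${n-r\brack p}_q{n+1-p\brack r}_q$. The algebraic identity should collapse cleanly once everything is written in symmetrized form; careful bookkeeping of the half-integer powers of $q$ coming from the twist and the normalization will occupy most of the computation.
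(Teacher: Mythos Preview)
Your approach is sound in principle---it is essentially the direct inductive verification carried out in \cite{lampe}, which the paper cites as an independent proof---but it is entirely different from the argument the paper gives. The paper never checks the recursion \eqref{eq:X-recursion} against the closed formulas. Instead it invokes Theorem~\ref{rank2} to identify $X_{-n}=X_{P_n}$ and $X_{n+3}=X_{I_n}$, where $P_n$ and $I_n$ are the preprojective and postinjective indecomposables of the Kronecker quiver, and then inserts Sz\'ant\'o's explicit Grassmannian counts $|Gr_{(a,b)}(P_n)|={n+1-a\choose n+1-b}_{q^2}{b-1\choose a}_{q^2}$ (and the analogous formula for $I_n$) into the general expression \eqref{eq:XV rank 2}; after a reindexing and the conversion from ordinary to symmetrized $q$-binomials, the stated sums drop out with no induction and no Pascal or Vandermonde identities whatsoever. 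A density argument over prime-power specializations then lifts the equality to formal $q$. Your route is elementary and self-contained but leaves the genuinely laborious part---the quadratic $q$-binomial convolution identities---as a promise; the paper's route is nearly computation-free but imports the full reflection-functor machinery behind Theorem~\ref{rank2} together with an external combinatorial result. One small slip in your sketch: the exchange relation reads $X_{-n-1}X_{-n+1}=qX_{-n}^2+1$, so in the noncommutative torus the quantity to verify is $Y_{-n-1}X_{-n+1}$, not $X_{-n+1}Y_{-n-1}$.
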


This result was proved independently in a recent preprint \cite{lampe}, we
present our proof for the convenience of the reader. Setting $q=1$ we recover
the Caldero-Chapoton-Zelevinsky formulas for $x_k$ from \cite{calzel}.
This suggests that Proposition~\ref{th:XZ-formula} should have a categorical and
quiver-theoretic interpretation in the form of a deformation of the
Caldero-Chapoton formula \cite[Eq. 14]{caldchap}.

In order to state our generalization of Proposition \ref{th:XZ-formula} to any  algebra
$\Acal_q(b,c)$, we need some quiver-theoretic notation.  Let $d=\gcd(b,c)$ and let $Q_{b,c}$ be the quiver
\xymatrix{\circ_1 \ar[r]^d & \circ_2} with $d$ arrows.  Let $\Fbb$ be a finite field and $\bar{\Fbb}$ an algebraic closure of $\Fbb$.  For any integer $n>0$ we will denote by $\Fbb_n$ the degree $n$ field extension of $\Fbb$ inside $\bar{\Fbb}$.  We define a valued representation $V$ of $Q_{b,c}$ by assigning an $\Fbb_c$-vector space $V_1$
to the first vertex, an $\Fbb_b$-vector space $V_2$ to the second vertex,
and an $\Fbb_d$-linear map $\varphi_i:V_1\to V_2$, $i=1,2,\ldots,d$, to each
arrow.

Let $V$ be a valued representation of $Q_{b,c}$ with dimension vector $[V] =
(v_1,v_2)$.  For ${\bf e}=(e_1,e_2)\in\ZZ_{\ge0}^2$, denote by $Gr_{\bf e}(V)$
the set of all subrepresentations $M$ of $V$ (i.e., $M=(M_1,M_2)$, where
$M_\delta$ is a subspace of $V_\delta$, $\delta=1,2$ such that
$\varphi_i(M_1)\subset M_2$ for $i=1,2,\ldots,d$) with $[M]={\bf e}$.  This is a finite set since $V$ is finite.  For each
valued representation $V$ of $Q_{b,c}
$ we define the element $X_V$ of the quantum torus $\Tcal_{|\Fbb|}$ by 
\begin{equation}
\label{eq:XV rank 2}
X_V=\sum_{{\bf e}} |\Fbb|^{-\half d_{{\bf e}}^V}|Gr_{{\bf e}}(V)|
X^{(-v_1+bv_2-be_2,ce_1-v_2)}
\end{equation}
where $d_{{\bf e}}^V=ce_1(v_1-e_1)-b(ce_1-e_2)(v_2-e_2)$.  When $b=c$, the valued representations of $Q_{b,b}$ are just the ordinary $\Fbb_{b}$-representations of $Q_{b,b}$ and, as we will demonstrate below, this formula gives a
deformation of the Caldero-Chapoton formula.

Let $C=\left(\begin{array}{cc} 2 & -b\\ -c & 2\end{array}\right)$ be a Cartan matrix and let $\Phi$
be the associated root system with simple roots $\{\alpha_1, \alpha_2\}$. We will label all negative real roots of $\Phi$ by $\ZZ\setminus \{1,2\}$ recursively as follows:
$$\alpha_{m-1}+\alpha_{m+1}=\begin{cases} 
ba_m & \text{if $m$ is odd} \\
ca_m & \text{if $m$ is even} \\
\end{cases} $$
for $m\in \ZZ\setminus \{1,2\}$ 
with the convention $\alpha_0=-\alpha_2$, $\alpha_3=-\alpha_1$. 
 
Then denote by $V_{(m)}$ the unique indecomposable valued
representation of $Q_{b,c}$ with dimension vector $-\alpha_m$ (see e.g., \cite[Theorem 16]{hub}). 

\begin{theorem}\label{rank2}
For any  $b,c\in\ZZ_{>0}^2$ and  each $m\in\ZZ\setminus \{1,2\}$, the $m$-th cluster variable $X_m$ of $\Acal_{|\Fbb|}(b,c)$ equals $X_{V_{(m)}}$.
\end{theorem}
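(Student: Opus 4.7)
The plan is to prove the theorem by induction on $|m|$, using the exchange relation \eqref{eq: clRel} to propagate outward from the initial cluster in both directions. The base cases $m \in \{0, 3\}$ correspond to the simple valued representations $V_{(0)}$ and $V_{(3)}$ with dimension vectors $(0,1)$ and $(1,0)$; for each of them the sum \eqref{eq:XV rank 2} collapses to just two summands, and a direct expansion, using the $q$-commutation $X_1 X_2 = q X_2 X_1$ to move $X_2^{-1}$ past $X_1^b$, confirms $X_{V_{(0)}} = X_0$ and $X_{V_{(3)}} = X_3$ as computed from \eqref{eq: clRel} at $m = 1$ and $m = 2$.

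The inductive step reduces to verifying a single exchange identity of the form
\[
X_{V_{(m-1)}}\, X_{V_{(m+1)}} \;=\; q^{d'/2}\, X_{V_{(m)}}^{d'} + 1,
\]
where $d' = b$ when $m$ is odd and $d' = c$ when $m$ is even; at the first step in each direction ($m = 3$ and $m = 0$) one of $X_{V_{(m \pm 1)}}$ is to be read as the initial generator $X_1 = X^{(1,0)}$ or $X_2 = X^{(0,1)}$. Since both sides lie in $\Tcal_q$ and are Laurent polynomials over $\ZZ[q^{\pm 1/2}]$, it suffices to verify this identity with $q$ a formal parameter; the specialization to $q = |\Fbb|$ is then automatic. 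I would expand both sides in the basis $\{X^{(a_1,a_2)}\}$ and match coefficients lattice point by lattice point. The left-hand side becomes a double sum indexed by pairs $(M', M'') \in Gr_{{\bf e}'}(V_{(m-1)}) \times Gr_{{\bf e}''}(V_{(m+1)})$, with the $q$-commutations in $\Tcal_q$ contributing explicit quadratic powers of $q$ in the dimension coordinates. The right-hand side, after raising $X_{V_{(m)}}$ to the $d'$-th power via a quantum multinomial calculation and straightening the resulting monomials, forces the symmetrized quantum binomials ${n \brack r}_q$ of \eqref{eq:sym-bin} to appear.

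The conceptual bridge between the two expansions is the almost-split short exact sequence
\[
0 \to V_{(m+1)} \to V_{(m)}^{\oplus d'} \to V_{(m-1)} \to 0
\]
in the representation category of $Q_{b,c}$, provided by Auslander--Reiten theory over valued quivers (cf.\ \cite{hub}). A subrepresentation $N \subseteq V_{(m)}^{\oplus d'}$ restricts to $N \cap V_{(m+1)} \subseteq V_{(m+1)}$ and projects to a subrepresentation of $V_{(m-1)}$, and for a prescribed pair $(M', M'')$ one must count the preimages $N$ realizing them. The main obstacle, which I expect to be the technical heart of the argument, is to establish that this Hall-type fiber count over $\Fbb$ evaluates to exactly the $q$-binomial weight extracted from the right-hand side, uniformly in $b$ and $c$; the affine and wild regimes $bc \geq 4$, where the orbit geometry of $Q_{b,c}$ is no longer classical, are the cases demanding the most care and where the valued representation theory of \cite{hub} is indispensable.
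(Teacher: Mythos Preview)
Your approach diverges substantially from the paper's. The paper derives Theorem~\ref{rank2} as an immediate corollary of a general mutation/reflection compatibility statement (Theorem~\ref{tool}): for any $N\in rep~Q\langle i\rangle$ one has $\mu_i X_N = X_{\Sbb_i N}$, where $\Sbb_i$ is the BGP reflection functor. The engine behind this is a purely Grassmannian recursion (Theorem~\ref{th:recursion}) expressing $Gr^{\mu_iQ}_{\bf e}(M)$ as a disjoint union of products of Grassmannians of $\Sbb_i M$ and of semisimple pieces $cS_i$. Once that is in hand, the rank~2 statement follows by induction along the mutation chain: the base case $X_{[k;k]}=X_{S_k}$ is a two-line computation (Lemma~\ref{simple}), and each further sink/source mutation replaces $N$ by $\Sbb_i N$. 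No multiplication formula among the $X_V$'s is ever invoked.

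Your proposal instead aims to verify the exchange relation $X_{V_{(m-1)}}X_{V_{(m+1)}}=q^{d'/2}X_{V_{(m)}}^{d'}+1$ directly via the almost-split sequence. That is a quantum analogue of the Caldero--Keller cluster multiplication theorem, and it is a considerably harder statement than anything the paper proves. You correctly flag the Hall-type fibre count as ``the technical heart,'' but you do not carry it out; in the quantum setting the bookkeeping of $q$-powers is genuinely delicate --- for instance $X_V^{d'}$ and $X_{V^{\oplus d'}}$ differ by a nontrivial power of $q$, so the passage from subrepresentations of $V_{(m)}^{\oplus d'}$ to the expansion of $X_{V_{(m)}}^{d'}$ already requires its own argument before one even reaches the fibre analysis. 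As written this is a plausible outline of an alternative route, but the central identity remains unproved, so it is not yet a proof. The paper's reflection-functor argument sidesteps all of this: it never multiplies two nontrivial $X_V$'s, only tracks how a single $X_V$ transforms under one mutation of the initial seed.
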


We will prove Theorem \ref{rank2} in Section \ref{def}.
In section \ref{finitetype} we will illustrate Theorem 1.2 by computing all $X_k$ of finite types, i.e., when $bc\le 3$.

Now we consider a general class of rank $n\ge 2$ quantum cluster
algebras attached to valued quivers on $n$ vertices.  

Let $Q$ be a quiver without vertex loops or 2-cycles.  Suppose $Q$ has vertices $[1,n]$ and $d_{ij}$ edges from $i$ to $j$, and let $d_i$, $i=1,\ldots,n$ be \emph{valuations} on the vertices. We will call such a quiver a \emph{valued quiver}. 
Define the matrix $B=B_Q=(b_{ij})$ by 
\[b_{ij}=\begin{cases} d_{ij}d_j/gcd(d_i,d_j) & \text{ if $i\to j$ in
$Q$}\\ -d_{ij}d_j/gcd(d_i,d_j) & \text{ if $j\to i$ in $Q$}\\ 0 &
\text{ otherwise.}\end{cases}\]
For $D=diag(d_1,\ldots,d_n)$ we have $DB$ is skew-symmetric, i.e.
$d_ib_{ij}=-d_jb_{ji}$ for all $i$ and $j$.  From the pair $(B,D)$ we can construct a valued quiver $Q_B$ having vertices $[1,n]$ with valuations $d_i$ and, when $b_{ij}>0$, $Q_B$ has $gcd(|b_{ij}|,|b_{ji}|)$ edges from vertex $i$ to vertex $j$.  This gives a one-to-one correspondence between valued quivers and skew-symmetrizable matrices with symmetrization data.  Thus we will freely identify the valued quiver $Q$ with the pair $(B,D)$ and call the pair $(B,D)$ a \emph{valued quiver}.
We will call the valued quiver $Q$ \emph{invertible} if $B$ is invertible.  If $B$ is not invertible, we replace the valued quiver $(B,D)$ by the invertible valued quiver $\tilde{Q}=(\tilde{B},D\oplus D)$ with $2n$ vertices as in section \ref{def}, we will call $Q$ the principal subquiver of $\tilde{Q}$. Thus we will assume that $n$ is even and valued quiver will always mean invertible valued quiver.

We define valued representations of $Q$ by assigning an $\Fbb_{d_i}$-vector space to
each vertex $i$ and an $\Fbb_{gcd(d_i,d_j)}$-linear map to each edge $i\to
j$.  Denote the category of all finite-dimensional valued representations of $Q$ by $rep~Q$.  It is well-known (see e.g., \cite{schiffmann}) that $rep~Q$ is a \emph{length category}, i.e. the Grothendieck group
$\Qcal$ of $rep~Q$ is a free abelian group
generated by the classes $\alpha_i=[S_i]$, $i=1,\ldots,n$ of simple representations associated to the vertices.  In particular, the class $[V]\in \Qcal$ of an object $V$ is
naturally identified with the dimension vector of $V$. The Euler form on
$\Qcal$ is given by bilinearly extending the following formula 
\[\langle\alpha_i,\alpha_j\rangle=
\begin{cases}
d_i & \text{if $i=j$}\\ 
-\max(0,d_ib_{ij}) & \text{if $i\ne j$.}
\end{cases}\]
Furthermore, abbreviate $\alpha_i^\vee:=\frac{1}{d_i}\alpha_i$ and note that 
$\langle \alpha_i^\vee,{\bf e}\rangle$ and $\langle{\bf e}, 
\alpha_i^\vee\rangle$ are integers for all ${\bf e}\in
\Qcal$. Then for ${\bf e} \in \Qcal$ 
define vectors ${}^*{\bf e},{\bf e}^*\in \ZZ^n$ by 
\[{}^*{\bf e}=(\langle \alpha_1^\vee,{\bf e}\rangle,\ldots,\langle
\alpha_n^\vee,{\bf e}\rangle),~{\bf e}^*=(\langle {\bf
e},\alpha_1^\vee\rangle,\ldots,\langle {\bf e},\alpha_n^\vee\rangle).\]

Denote $\Lambda=\Lambda_Q := -DB^{-1}$.  By definition $\Lambda$ is skew-symmetric and satisfies $\Lambda B = -B^T\Lambda = -D$. 
Write $\Lambda=(\lambda_{ij})$ and let $d$ be the least common multiple of all the denominators of the $\lambda_{ij}$'s. Let $\Tcal_{\Lambda_Q,q}$ denote the \mbox{$n$-dimensional} quantum torus, i.e. \[\Tcal_{\Lambda_Q,q}=\ZZ[q^{\pm\frac{1}{2d}}]\langle X_1^{\pm
1},\ldots,X_{n}^{\pm 1}| X_iX_j=q^{\lambda_{ij}} X_jX_i\rangle.\]  
For each ${\bf a}=(a_1,\ldots,a_n)\in \ZZ^n$ we define a monomial $X^{({\bf a})}\in
\Tcal_{\Lambda_Q,q}$ by: 
$$X^{({\bf a})}:=q^{-\frac{1}{2}\sum\limits_{i<j} \lambda_{ij}a_i a_j} X_1^{a_1}\cdots
X_n^{a_n}.$$   

For $V\in rep~Q$ and ${\bf e}\in
\Qcal$ define $Gr_{\bf e}(V)$ to be the set of all
subobjects $W$ of $V$ such that $[W]=\bf e$.  Sometimes we will think of this as the set of all short exact sequences $\{0\to W\subset V\to V/W\to 0 : [W]={\bf e}\}$.  Note
that $Gr_{\bf e}(V)$ is finite since $V$ is a finite set. Define the element $X_V\in \Tcal_{\Lambda_Q,|\Fbb|}$
by the formula:
\begin{equation}\label{qcc-formula2}
X_V=\sum\limits_{{\bf e}\in \Qcal} |\Fbb|^{-\half\langle{\bf
e},[V]-{\bf e}\rangle}|Gr_{{\bf e}}(V)| X^{(B{\bf e}-{}^*[V])}.
\end{equation}
Note that $B{\bf e}={}^*{\bf e}-{\bf e}^*$.
This is equivalent to the following formula for $X_V$ which the reader may find useful:
$$X_V=\sum\limits_{M\subset V} |\Fbb|^{-\half\langle [M],[V/M]\rangle} X^{(-[M]^*-{}^*[V/M])}.$$
It is easy to see that when $n=2$ equation \eqref{qcc-formula2} specializes to equation \eqref{eq:XV rank 2}.
When $d_1=\cdots=d_n=\delta$, i.e. the quiver is equally valued it is known (see \cite[Corollary 4]{caldrein}, footnote 5 on page 6 of \cite{naka}, and the corrected proof in \cite{qin})
that for $V$ exceptional and indecomposable $Gr_{{\bf e}}(V)$ is the set of $\Fbb_{\delta}$ points of an algebraic variety of dimension $\langle{\bf
e},[V]-{\bf e}\rangle/\delta$ and $|Gr_{{\bf e}}(V)|$ is given by a positive polynomial
in $|\Fbb|^\delta$. 

For a sink or source $i$ of $Q$ denote by $\mu_iQ$ the valued quiver $(\mu_iB,D)$ where we change the sign of each entry of $B$ in row $i$ or column $i$.  This is equivalent to reversing all arrows with vertex $i$ as source or target.  We will call a sequence of vertices $k_1$, $k_2$, \ldots, $k_{r+1}$ in $Q$ \emph{admissible} if the following hold:
\begin{itemize}
\item $k_i\ne k_{i+1}$ for each $i$;
\item $k_1$ is a sink or source in $Q$;
\item for each $1\le i\le r-1$, vertex $k_{i+1}$ is a sink or source in the quiver $\mu_{k_i}\mu_{k_{i-1}}\cdots\mu_{k_1}Q$.
\end{itemize}
Note that $k_{r+1}$ does not have to be a sink or a source.

Let $C$ denote the $n\times n$ Cartan counterpart to $B$, i.e. define 
\[c_{ij}=\begin{cases}2 & \text{ if } i=j\\ -|b_{ij}| & \text{ if } i\neq
j.\end{cases}\]
Let $\Phi$ denote the root system associated to $C$, see \cite{kac}.  We will
identify $\Qcal$ with the root lattice of $\Phi$ by taking
$\Pi=\{\alpha_1,\ldots,\alpha_n\}$ to be the set of simple roots in $\Phi$. 
Define simple reflections $\sigma_i$ in the Weyl group of $\Phi$ by setting
$\sigma_i(\alpha_j)=\alpha_j-c_{ij}\alpha_i$ and extending linearly (this is the
correspondence we will use between the root system $\Phi$ and $C$).

Denote by $\Acal_q(Q)\subset \Tcal_{\Lambda_Q,q}$ the quantum cluster
algebra corresponding to the invertible valued quiver $Q$ (see
Section~\ref{def} for details).  

\begin{remark}\label{rmk:coeff}
We restrict our attention to invertible valued quivers for simplicity.  However the results below hold for any compatible pair $(\Lambda,\tilde{B})$ where $Q$ is the valued quiver associated to the principal part of $\tilde{B}$.
\end{remark}

Our main result is the following

\begin{theorem}\label{main}
Let $V_\alpha$ be the unique indecomposable object of $rep~Q$ with dimension vector $\alpha$ given by $\alpha=\sigma_{k_1}\sigma_{k_2}\cdots\sigma_{k_r}(\alpha_{k_{r+1}})$ 
where $k_1,k_2,\ldots,k_{r+1}$ is an admissible sequence in $Q$.  Then $X_{V_\alpha}$ is a cluster variable of $\Acal_{|\Fbb|}(Q)$. Conversely if the quiver $Q$ 
is acyclic, each cluster variable of $\Acal_{|\Fbb|}(Q)$ belonging to any acyclic cluster is of the form $X_{V_\alpha}$ for some $\alpha$ as above.
\end{theorem}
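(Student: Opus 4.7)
The plan is to proceed by induction on the length $r+1$ of the admissible sequence, using BGP reflection functors as the categorical shadow of quiver mutation, and reducing the theorem to a single compatibility between the formula \eqref{qcc-formula2} and mutation of quantum cluster algebras.

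The technical crux is the following mutation compatibility: for any sink or source $k$ of $Q$ and any indecomposable $V\in rep~Q$ with $V\not\cong S_k$, the elements $X_V\in \Tcal_{\Lambda_Q,|\Fbb|}$ and $X_{\Sigma_k V}\in \Tcal_{\Lambda_{\mu_k Q},|\Fbb|}$ agree in the common ambient skew field after identification along the quantum exchange relation at vertex $k$, where $\Sigma_k$ is the appropriate BGP reflection functor. Proving this requires tracking four quantities under the reflection: (a) the subrepresentation Grassmannian, with $Gr_{\bf e}(V)$ in bijection with $Gr_{\sigma_k {\bf e}}(\Sigma_k V)$ whenever $V\not\cong S_k$; (b) the Euler pairing $\langle {\bf e},[V]-{\bf e}\rangle$, which matches the pairing in $\mu_k Q$ after applying $\sigma_k$; (c) the transformation of the vector $B{\bf e}-{}^*[V]$ under $\mu_k B$; and (d) the change of quantum torus generators controlled by the identity $\Lambda\,\mu_k(B)=-D$. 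After substituting the quantum exchange relation at vertex $k$ and collecting terms by dimension vector, the identity reduces to a pair of monomial relations; the case $V=S_k$ serves as a consistency check, as the two monomials on the right of the exchange relation are then precisely the two summands of $X_{S_k}$.

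With the compatibility in hand, the base case $r=0$ asserts that $X_{S_{k_1}}$ is a cluster variable whenever $k_1$ is a sink or source of $Q$. Since the only proper subrepresentations of $S_{k_1}$ are $0$ and $S_{k_1}$, the formula \eqref{qcc-formula2} collapses to a sum of two monomials, which one verifies coincides with the quantum exchange relation defining the mutated cluster variable $X_{k_1}'$. For the inductive step, given an admissible sequence $k_1,\ldots,k_{r+1}$ with $r\ge 1$, the tail $k_2,\ldots,k_{r+1}$ is admissible in $\mu_{k_1}Q$, so by induction $X_{V_{\alpha'}}$ with $\alpha'=\sigma_{k_2}\cdots\sigma_{k_r}(\alpha_{k_{r+1}})$ is a cluster variable of $\Acal_{|\Fbb|}(\mu_{k_1}Q)$. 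Using $V_\alpha\cong \Sigma_{k_1}V_{\alpha'}$ and the fact that $\Acal_{|\Fbb|}(Q)$ and $\Acal_{|\Fbb|}(\mu_{k_1}Q)$ live in the same ambient skew field and share the same set of cluster variables, the compatibility lemma promotes $X_{V_{\alpha'}}$ to $X_{V_\alpha}$ in $\Acal_{|\Fbb|}(Q)$.

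For the converse in the acyclic case, the combinatorics of acyclic quantum cluster algebras ensures that every cluster variable in an acyclic cluster can be reached from the initial cluster by a sequence of mutations at vertices which are sinks or sources of the intermediate valued quivers. Reading off such a sequence $k_1,\ldots,k_{r+1}$ and applying the forward direction produces the desired $V_\alpha$. I expect the main obstacle to be the mutation compatibility: the weight $|\Fbb|^{-\half\langle{\bf e},[V]-{\bf e}\rangle}$ and the monomial $X^{(B{\bf e}-{}^*[V])}$ must be tracked through simultaneous mutation of $B$ and $\Lambda$, and the required cancellation rests on a delicate identity among the resulting $q$-powers once the quantum exchange relation has been substituted and subrepresentations have been reindexed via the reflection functor.
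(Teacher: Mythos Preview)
Your overall architecture is exactly that of the paper: the compatibility $\mu_k X_V = X_{\Sbb_k V}$ for $V\in rep~Q\langle k\rangle$ (the paper's Theorem~\ref{tool}), the base case $X_{S_k}=X_{[k;k]}$ (Lemma~\ref{simple}), induction along the admissible sequence (Theorem~\ref{th:mutations}), and for the converse the fact from \cite[Corollary~4]{caldkell} that acyclic clusters are connected by sink/source mutations (Corollary~\ref{acyclic}).

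However, your item (a) is wrong as stated, and this is precisely where the real work lies. There is \emph{no} bijection between $Gr_{\bf e}(V)$ and $Gr_{\sigma_k{\bf e}}(\Sbb_k V)$: the reflection functor only identifies the restricted Grassmannians $Gr^{Q\langle k\rangle}$ in which both the subobject and the quotient lie in $rep~Q\langle k\rangle$. A general subrepresentation of $V$ may have $S_k$ split off from the sub (if $k$ is a source) or the quotient (if $k$ is a sink), and these pieces are not seen by $\Sbb_k$. What the paper actually proves (Theorem~\ref{th:recursion}) is a stratified decomposition
\[
Gr_{\bf e}^{\mu_kQ}(M)\;=\;\coprod_{c\ge0}\;\Fbb^{\,d_kc(\sigma_k({\bf e})_k+c)}\times Gr_{c\alpha_k}\bigl((m_k-\sigma_k({\bf e})_k-e_k)S_k\bigr)\times Gr_{\sigma_k({\bf e})+c\alpha_k}^{Q}(\Sbb_kM),
\]
obtained by first peeling off copies of $S_k$ on each side and then applying $\Sbb_k$ to what remains. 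Taking cardinalities, the affine fiber and the simple-module Grassmannian combine to give exactly the $q$-binomial ${c_k\brack r}_{q^{d_k/2}}$ that appears when a monomial is mutated via \eqref{eq:cl_exp}; this is why the two recursions match term by term after the reindexing ${\bf f}=\sigma_k({\bf e})+c\alpha_k$. So the identity does not ``reduce to a pair of monomial relations'' as you suggest: it is an equality of $q$-binomial sums, and establishing the Grassmannian recursion above is the step you are missing.
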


We will prove Theorem \ref{main} in Section \ref{def}.  

\begin{theorem}\label{cor:fintype}
If the valued quiver $Q$ is a Dynkin diagram, then each cluster variable of $\Acal_{|\Fbb|}(Q)$ is of the form $X_N$ for some indecomposable $N\in rep~Q$.
\end{theorem}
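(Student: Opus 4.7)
The plan is to combine the converse part of Theorem~\ref{main}, the finiteness of the Auslander-Reiten quiver in Dynkin type, and the valued Gabriel theorem. Since $Q$ is Dynkin, it is acyclic, and the mutation class of any Dynkin quiver consists only of acyclic orientations of the same Dynkin diagram; in particular every cluster of $\Acal_{|\Fbb|}(Q)$ is acyclic. The converse statement of Theorem~\ref{main} therefore applies to every cluster variable, which must have the form $X_{V_\alpha}$ with $\alpha=\sigma_{k_1}\sigma_{k_2}\cdots\sigma_{k_r}(\alpha_{k_{r+1}})$ for some admissible sequence $k_1,\ldots,k_{r+1}$ in $Q$.

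The next step is to show that the set of $\alpha$ so produced is exactly $\Phi^+$. The sequence $k_1,\ldots,k_r$ corresponds to a sequence of BGP reflection functors at sinks or sources of the successively mutated quivers; applied to the simple $S_{k_{r+1}}$ these functors produce indecomposable objects of $rep~Q$, whose dimension vectors are precisely the displayed $\alpha$. The indecomposables reached this way are the preprojective (equivalently postinjective) modules. The key observation is that in Dynkin type the preprojective component of the AR-quiver coincides with the entire AR-quiver --- equivalently, the Coxeter element has finite order --- so every indecomposable of $rep~Q$ is preprojective and every positive root of $\Phi$ is produced by an admissible sequence. Finally, by the Dlab-Ringel valued analogue of Gabriel's theorem, the dimension vector map is a bijection between isomorphism classes of indecomposables of $rep~Q$ and positive roots, so $V_\alpha$ is the unique indecomposable with $[V_\alpha]=\alpha$ and we may take $N=V_\alpha$.

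The main obstacle is the second step: verifying that the admissibility constraint (reflect only at a current sink or source) does not skip any positive root. In acyclic non-Dynkin cases it reaches only preprojective and preinjective indecomposables and misses the regular ones; it is precisely the Dynkin hypothesis --- finiteness of the AR-quiver, equivalently finite order of the Coxeter element --- that forces every positive root to be admissible. The initial variables $X_1,\ldots,X_n$ are handled either by regarding them as formal values of the $X_N$ formula on the shifted projectives $P_i[1]$ of the cluster category or by adopting the convention that length-zero admissible sequences produce the simple roots; either way the statement of the theorem is recovered.
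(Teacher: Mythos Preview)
Your argument contains a genuine gap: the claim that the mutation class of a Dynkin quiver consists only of acyclic orientations of the same diagram is false. Already in type $A_3$, mutating the linear quiver $1\to 2\to 3$ at vertex $2$ yields the oriented $3$-cycle $1\to 3\to 2\to 1$. Thus Dynkin-type cluster algebras have cyclic seeds, and the converse part of Theorem~\ref{main} (equivalently Corollary~\ref{acyclic}) cannot be invoked for an arbitrary cluster variable the way your first step requires. One could try to weaken the hypothesis to ``every cluster variable lies in \emph{some} acyclic cluster'', but this is a nontrivial statement about the exchange graph which you have not justified, and in any case it is not what you asserted.

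The paper's proof avoids this difficulty by reversing the direction of the argument. Instead of starting from an arbitrary cluster variable and locating an acyclic seed containing it, one starts from an arbitrary indecomposable $N\in rep~Q$ and shows that $X_N$ is a cluster variable. Concretely: fix an admissible ordering $k_1,\ldots,k_n$ of $Q$, form the Coxeter functor $C=\Sbb_{k_1}\cdots\Sbb_{k_n}$, and use the Dlab--Ringel result that in Dynkin type every indecomposable is of the form $C^rP_{k_t}$ for suitable $r,t$. Since $C$ is a composite of reflection functors at successive sinks, each $C^rP_{k_t}$ is reached from a simple by an admissible sequence of sink/source reflections, so Theorem~\ref{th:mutations} exhibits $X_{C^rP_{k_t}}$ as a cluster variable. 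The known bijection between non-initial cluster variables and positive roots (hence indecomposables, via valued Gabriel) then forces the map $N\mapsto X_N$ to be onto the set of non-initial cluster variables. The point is that this direction only requires the \emph{path} of mutations to be through sinks and sources; it is indifferent to whether the eventual seed containing $X_N$ is acyclic.
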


We will prove Theorem \ref{cor:fintype} in Section \ref{def}.  For any invertible valued quiver $Q$ let $\Tcal_{DB,q}=\ZZ[q^{\pm\half}]\langle Z_1^{\pm1},\ldots,Z_{n}^{\pm 1}| Z_iZ_j=q^{d_ib_{ij}} Z_jZ_i\rangle$ be the quantum torus associated to the skew-symmetric matrix $DB$.  It is easy to see that the assignment $Z_i\mapsto X^{({\bf b}^i)}$ defines an embedding of quantum tori which extends to an embedding of skew-fields $j: \Fcal_{DB,q}\to \Fcal_{\Lambda_Q,q}$.  We say that an element $X\in\Fcal_{\Lambda_Q,q}$ is \emph{$B$-compatible} if $X$ can be written as $j(F_X)X^{({\bf g}_X)}$ for some $F_X\in\Fcal_{DB,q}$ and ${\bf g}_X\in\ZZ^n$.  We refer to $F_X$ as an $F$-factor for $X$ thus generalizing the definition of $F$-polynomial from \cite[Theorem 5.3]{tran}.  It follows from \cite[Theorem 5.3]{tran} that each cluster monomial $X$ is $B$-compatible and (under appropriate choice of ${\bf g}_X$) the $F$-factor $F_X$ is actually a polynomial.  The following result justifies these definitions.

\begin{proposition}\label{prop:compatible}\mbox{}
\begin{enumerate}
\item\label{comp} The mutation of $B$-compatible elements in direction $k$ gives $\mu_kB$-compatible elements.
\item\label{f-factor} For $V\in rep~Q$, $X_V$ is $B$-compatible and we have $$F_{X_V}=1+\sum\limits_{\bf e\in\Qcal\setminus \{0\}}q^{\half\langle {\bf e},{\bf e}\rangle}|Gr_{\bf e}(V)|Z^{({\bf e})}.$$
\end{enumerate}
\end{proposition}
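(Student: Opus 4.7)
The plan is to establish \eqref{f-factor} first, since it reduces to a direct computation inside the quantum torus, and then use it together with the quantum exchange relation to deduce \eqref{comp}.

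For \eqref{f-factor}, the cornerstone is the identity
\[ j\bigl(Z^{(\mathbf{e})}\bigr) = X^{(B\mathbf{e})},\qquad \mathbf{e}\in\Qcal. \]
I would prove it by expanding $Z^{(\mathbf{e})} = q^{-\frac{1}{2}\sum_{i<j} d_ib_{ij}e_ie_j}\,Z_1^{e_1}\cdots Z_n^{e_n}$, applying $j$, and collecting the resulting product $X^{(e_1\mathbf{b}^1)}\cdots X^{(e_n\mathbf{b}^n)}$ into $X^{(B\mathbf{e})}$ via the commutation $X^{(\mathbf{a})}X^{(\mathbf{b})} = q^{\frac{1}{2}\mathbf{a}^T\Lambda\mathbf{b}}X^{(\mathbf{a}+\mathbf{b})}$. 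The pairings $(\mathbf{b}^i)^T\Lambda\mathbf{b}^j = d_ib_{ij}$ follow directly from $\Lambda B = -D$, and the cumulative reordering factor cancels the prefactor built into $Z^{(\mathbf{e})}$. With this identity in hand, I would verify $X_V = j(F_V)\,X^{(-{}^*[V])}$ by moving $X^{(B\mathbf{e})}$ past $X^{(-{}^*[V])}$: using $B^T\Lambda = D$ and $d_i\langle\alpha_i^\vee,[V]\rangle = \langle\alpha_i,[V]\rangle$, one gets $(B\mathbf{e})^T\Lambda\,{}^*[V] = \langle\mathbf{e},[V]\rangle$, so the $q$-exponent $\frac{1}{2}\langle\mathbf{e},\mathbf{e}\rangle - \frac{1}{2}\langle\mathbf{e},[V]\rangle$ collapses to $-\frac{1}{2}\langle\mathbf{e},[V]-\mathbf{e}\rangle$, reproducing \eqref{qcc-formula2} on the nose.

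For \eqref{comp}, my strategy is to show that each old generator is $\mu_kB$-compatible and then combine these via closure under products. First, $X_i = X_i'$ for $i\ne k$ is trivially $\mu_kB$-compatible. For $X_k$, I would rewrite the quantum exchange relation as
\[ X_k' = X^{(-\mathbf{e}_k-[\mathbf{b}^k]_-)}\bigl(1+q^cZ_k\bigr), \]
with $q^c$ read off from the $\Lambda$-commutation, and then invert it to obtain $X_k = (X')^{(\mathbf{g}_0)}\bigl(1+q^{c'}Z_k'\bigr)$, which is manifestly $\mu_kB$-compatible. The quantum $Y$-variable mutation formula (in the spirit of \cite[Theorem 5.3]{tran}), which sends $Z_k\mapsto(Z_k')^{-1}$ and $Z_i\mapsto Z_i'\cdot(1+q^{c_i}Z_k')^{\pm b_{ki}}$ up to $q$-scalars, then yields $\mathrm{Im}(j)\subseteq\mathrm{Im}(j')$. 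Given any $B$-compatible $X = j(F)X^{(\mathbf{g})}$, I would rewrite $j(F) = j'(\widetilde F)$ and $X^{(\mathbf{g})} = (X')^{(\mathbf{g}')}\,j'(H)$ for suitable $\widetilde F,H\in\Fcal_{D\mu_kB,q}$ and $\mathbf{g}'\in\ZZ^n$; swapping $(X')^{(\mathbf{g}')}$ past $j'(H)$ rescales each $Z_i'$ occurring in $H$ by $q^{-d_ig_i'}$ (since $(\mu_k\Lambda)(\mu_kB)\mathbf{e}_i = -d_i\mathbf{e}_i$), and this rescaling preserves $\Fcal_{D\mu_kB,q}$. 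Hence $X\in j'(\Fcal_{D\mu_kB,q})\cdot(X')^{(\mathbf{g}')}$, i.e.\ $X$ is $\mu_kB$-compatible.

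The main obstacle is the $q$-factor bookkeeping throughout \eqref{comp}: the prefactors in the $X^{(\cdot)}$ and $Z^{(\cdot)}$ normalizations, the $\Lambda$-commutation factors under swapping, and the $q$-scalars in the quantum $Y$-mutation formula all have to conspire so that the final $F$-factor lives in $\Fcal_{D\mu_kB,q}$ and not in a larger extension. The key identity $B^T\Lambda = D$ (together with its mutated version $(\mu_kB)^T(\mu_k\Lambda) = D$) is precisely what ensures that conjugation by $X$-monomials acts on the $Z$-subalgebra by $q$-scalar rescalings only.
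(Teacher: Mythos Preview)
Your proof of \eqref{f-factor} is correct and matches the paper's one-line justification (``an easy computation using $\langle\alpha_i,\alpha_j\rangle=\Lambda(\mathbf{b}^i,{}^*\alpha_j)$''), which you have fully unpacked: the identity $j(Z^{(\mathbf{e})})=X^{(B\mathbf{e})}$ together with $(B\mathbf{e})^T\Lambda\,{}^*[V]=\langle\mathbf{e},[V]\rangle$ is exactly what is needed.

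For \eqref{comp} your argument is also correct and uses the same two ingredients as the paper: the quantum $Y$-variable (Fock--Goncharov) mutation to control the $F$-factor, and the cluster exchange formula \eqref{eq:cl_exp} to control the monomial part. The paper packages the first ingredient as a single intertwining lemma $\mu_k\circ j' = j\circ\underline{\mu_k}$ (with $\underline{\mu_k}$ the Fock--Goncharov mutation on $\Fcal_{D\mu_kB,q}$), from which $\mathrm{Im}(j)\subseteq\mathrm{Im}(\mu_k\circ j')$ is immediate; you instead arrive at the same inclusion by writing out the $Y$-mutation on generators. One small caution: your opening line ``show each old generator is $\mu_kB$-compatible and combine via closure under products'' is not by itself sufficient, since a general $B$-compatible element has $F$-factor in the full skew field $\Fcal_{DB,q}$, not in the polynomial ring generated by the $X_i$. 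Your subsequent argument (via $\mathrm{Im}(j)\subseteq\mathrm{Im}(j')$ and the conjugation-by-monomials observation) does handle the general case correctly, so this is only a presentational wrinkle.
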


\begin{remark}
Following up on Remark~\ref{rmk:coeff}, since all our results hold for principal coefficients, Proposition~\ref{prop:compatible}\eqref{f-factor} together with \cite[Theorem 5.3(2)]{tran} directly let one work with any coefficients.
\end{remark}

\begin{corollary}\label{cor:fin-fpoly}
Let $Q$ be of finite type.  Then for any indecomposable $V\in rep~Q$ we have $$F_{X_V}\in 1+\sum\limits_{{\bf e}\in\ZZ_{\ge0}^n\setminus\{0\}} q^{\half}\ZZ[q^{\half}]Z^{({\bf e})}.$$
\end{corollary}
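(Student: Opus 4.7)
The plan is to combine the explicit formula for $F_{X_V}$ from Proposition \ref{prop:compatible}\eqref{f-factor} with two features specific to finite type: positive definiteness of the Euler form, and polynomiality of Grassmannian point counts for indecomposable $V$.

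First I would rewrite Proposition \ref{prop:compatible}\eqref{f-factor} as
\[
F_{X_V}=1+\sum_{{\bf e}\in\Qcal\setminus\{0\}}q^{\half\langle{\bf e},{\bf e}\rangle}|Gr_{\bf e}(V)|Z^{({\bf e})}.
\]
Every subrepresentation of $V$ has dimension vector in $\ZZ_{\ge 0}^n$, so this sum is in fact indexed by ${\bf e}\in\ZZ_{\ge 0}^n\setminus\{0\}$, and the corollary reduces to showing that for each such ${\bf e}$ the coefficient $q^{\half\langle{\bf e},{\bf e}\rangle}|Gr_{\bf e}(V)|$ lies in $q^{\half}\ZZ[q^{\half}]$.

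The $q^{\half}$-exponent is easy to bound. Since $Q$ is of Dynkin type, the symmetric form $({\bf e},{\bf f}):=\langle{\bf e},{\bf f}\rangle+\langle{\bf f},{\bf e}\rangle$ is the positive definite Tits form of the associated root system. The definition of $\langle\cdot,\cdot\rangle$ gives $\langle\alpha_i,\alpha_j\rangle\in\ZZ$ for all $i,j$, hence $\langle{\bf e},{\bf e}\rangle\in\ZZ$; positive definiteness then forces $\langle{\bf e},{\bf e}\rangle\ge 1$ whenever ${\bf e}\ne 0$, so the prefactor $q^{\half\langle{\bf e},{\bf e}\rangle}$ already contributes at least a factor of $q^{\half}$.

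The key input is then polynomiality of $|Gr_{\bf e}(V)|$ as a function of $q=|\Fbb|$. Because $V$ is indecomposable over a Dynkin quiver, $V$ is rigid and exceptional. In the equally valued case the count is a polynomial in $q$ with non-negative integer coefficients, as recalled in the introduction (Caldero--Reineke, Qin). For the non-simply-laced Dynkin species ($B_n,C_n,F_4,G_2$) this extends by Ringel's Hall polynomial theorem for species of finite representation type: each Hall number $F^V_{W_1,W_2}$ lies in $\ZZ[q]$, and
\[
|Gr_{\bf e}(V)|=\sum_{[W_1]={\bf e},\ [W_2]=[V]-{\bf e}} F^V_{W_1,W_2}
\]
is a finite sum of such polynomials, hence itself lies in $\ZZ[q]$. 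Alternatively one can pass to a simply-laced unfolding of $Q$ and descend the known point count. Carrying out this extension of the equally valued references to the general Dynkin case is the only place any real work is required, and is the main obstacle in the plan. With it in hand, $q^{\half\langle{\bf e},{\bf e}\rangle}|Gr_{\bf e}(V)|$ is a polynomial in $q^{\half}$ with integer coefficients whose minimum $q^{\half}$-degree is at least $\langle{\bf e},{\bf e}\rangle\ge 1$, so it lies in $q^{\half}\ZZ[q^{\half}]$, which is the content of Corollary \ref{cor:fin-fpoly}.
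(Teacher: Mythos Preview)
Your argument is correct and shares with the paper the two easy ingredients: the explicit formula for $F_{X_V}$ from Proposition~\ref{prop:compatible}\eqref{f-factor}, and the observation that positive definiteness of the Euler form in Dynkin type forces $\langle{\bf e},{\bf e}\rangle\ge 1$ for ${\bf e}\ne 0$. The divergence is in how you establish that $|Gr_{\bf e}(V)|$ is an integer polynomial in $q$. The paper proceeds type by type, invoking Qin's positivity result for the simply-laced cases, \cite[Theorem~6.1]{tran2} for types $B_n$ and $C_n$, and the explicit computation in Section~\ref{g2} for $G_2$. Your route via Ringel's Hall polynomials for representation-finite hereditary species is a genuinely different and more uniform argument: it handles all Dynkin types at once (in particular $F_4$, which the paper's list of references does not explicitly cover), and it uses only the weaker input of integer polynomiality of Hall numbers rather than positivity of Grassmannian counts. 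What the paper's references buy in return is the stronger positivity statement recorded in the Remark immediately following the corollary, which your Hall polynomial argument does not supply.
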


\begin{remark}
For simply laced types the positivity of coefficients of $F_{X_V}$ follows from the positivity of grassmannians from \cite{qin}.  See Section~\ref{g2} for positivity in type $G_2$.  Positivity for selected clusters in types $B_n$ and $C_n$ follow from \cite[Theorem 6.1]{tran2}.
\end{remark}

Based on the above results, we now conjecture a general deformation of the Caldero-Chapoton formula for any quantum cluster algebra with acyclic seed. 

\begin{conjecture}\label{qccconj}
Let $Q$ be an acyclic valued quiver.  Suppose 
$V\in rep~Q$ is an indecomposable exceptional valued representation.
Then $X_V$ is a cluster variable in $\Acal_{|\Fbb|}(Q)$.
\end{conjecture}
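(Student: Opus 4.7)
The plan is to reduce Conjecture~\ref{qccconj} to Theorem~\ref{main}: it suffices to show that the dimension vector of every indecomposable exceptional $V \in rep~Q$ is of the form $\sigma_{k_1}\sigma_{k_2}\cdots\sigma_{k_r}(\alpha_{k_{r+1}})$ for some admissible sequence $k_1,\ldots,k_{r+1}$ in $Q$, after which Theorem~\ref{main} immediately identifies $X_V$ as a cluster variable of $\Acal_{|\Fbb|}(Q)$.

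The main tool is the theory of BGP/APR reflection functors, adapted to the valued (species) setting by Dlab--Ringel and Hubery. For a sink or source $k$ of the acyclic valued quiver $Q$, the reflection functor $\sigma_k^\pm : rep~Q \to rep(\mu_k Q)$ restricts to an equivalence of the full subcategory of representations with no summand isomorphic to the simple $S_k$ onto the analogous subcategory of $rep(\mu_k Q)$, sends indecomposables to indecomposables, preserves exceptionality, and acts on dimension vectors by the simple reflection $\sigma_k$. Given an indecomposable exceptional $V \in rep~Q$ that is not simple, I would choose a sink or source $k_1$ of $Q$ with $V \not\cong S_{k_1}$ and pass to $\sigma_{k_1}^\pm V \in rep(\mu_{k_1}Q)$, then repeat in the mutated quiver, producing an admissible sequence by construction since each $k_{i+1}$ is selected to be a sink or source of $\mu_{k_i}\cdots\mu_{k_1}Q$. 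If the procedure terminates with the current representation being the simple at some vertex $k_{r+1}$, one reads off $[V] = \sigma_{k_1}\cdots\sigma_{k_r}(\alpha_{k_{r+1}})$ and applies Theorem~\ref{main}.

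The main obstacle is termination of this iterative reduction for general (in particular wild) acyclic valued quivers. The Euler form $\langle \alpha, \alpha \rangle$ is $W$-invariant, so one cannot induct on it directly. Instead I would try to induct on the total dimension $\sum_i v_i$ where $[V]=(v_1,\ldots,v_n)$, using that any non-simple indecomposable exceptional $V$ over an acyclic quiver admits both a sink $k$ with $v_k > 0$ at which the reflection $\sigma_k^+$ strictly decreases the $k$-th component and a source with the symmetric property. The precise statement needed is the theorem (going back to Crawley-Boevey and Schofield in the simply-laced case, with the species version arising from Hubery and Ringel) that every real Schur root of an acyclic quiver is in the orbit of a simple root under a sequence of sink/source reflections. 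Verifying this combinatorial/categorical claim in the valued setting, and certifying that the resulting sequence satisfies the admissibility condition $k_i \neq k_{i+1}$ without needing auxiliary rearrangement, is the crux of the proof. An alternative route, which I would pursue in parallel for tame and wild cases, is to argue via tilting theory: every indecomposable exceptional $V$ embeds into some complete tilting object (equivalently, into a cluster tilting object of the associated cluster category), and by \cite{tran} and Proposition~\ref{prop:compatible}\eqref{comp} the $F$-factor of $X_V$ transforms correctly under mutation, which combined with Theorem~\ref{main} applied to the acyclic seeds reachable by admissible mutations should identify $X_V$ with a cluster variable.
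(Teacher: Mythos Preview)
The statement you are attempting to prove is labelled \emph{Conjecture} in the paper, and the paper does not prove it; it proves only the partial result Theorem~\ref{main} (and its corollaries covering finite type, rank~2, and almost acyclic clusters), presents numerical evidence in Section~\ref{rank4}, and records that Qin established the equally valued case by other methods. So there is no ``paper's own proof'' to compare against.

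Your reduction to Theorem~\ref{main} has a genuine gap, and it is exactly the gap that separates Theorem~\ref{main} from Conjecture~\ref{qccconj}. An admissible sequence in the paper's sense mutates only at sinks or sources; since mutating an acyclic quiver at a sink or source yields another acyclic quiver, every seed reached this way is acyclic. By \cite[Corollary~4]{caldkell} all acyclic seeds are connected by such mutations, so the cluster variables produced by Theorem~\ref{main} are precisely those lying in some acyclic (or, via Corollary~\ref{acyclic}, almost acyclic) seed. For a wild acyclic $Q$ there are cluster variables that lie in no acyclic seed, and correspondingly there are indecomposable exceptional representations whose dimension vectors are real Schur roots not of the form $\sigma_{k_1}\cdots\sigma_{k_r}(\alpha_{k_{r+1}})$ for any admissible sequence. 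Your proposed descent on $\sum_i v_i$ fails here: for such $V$ there need be no sink or source $k$ at which the reflection strictly decreases the total dimension while keeping the representation exceptional and indecomposable in $rep(\mu_k Q)$. The claim you attribute to Crawley-Boevey--Schofield is not that every real Schur root is reachable by \emph{sink/source} reflections from the initial orientation; rather, arbitrary simple reflections (equivalently, arbitrary mutations, including at vertices that are neither sinks nor sources) are required, and Theorem~\ref{main} does not apply to those.

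Your alternative route through tilting/cluster-tilting objects is closer in spirit to how the equally valued case is actually handled (Qin), but as stated it is a sketch rather than an argument: one needs to show that the quantum Caldero--Chapoton map is compatible with \emph{arbitrary} mutations, not just sink/source ones, which is substantially harder than Theorem~\ref{tool} and is the real content of the conjecture.
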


In Corollary~\ref{acyclic} we show that all cluster variables in almost acyclic clusters (clusters which are one mutation away from an acyclic cluster) are of the form $X_V$ for some $V\in rep~Q$.
We present further evidence for this conjecture in section \ref{rank4}.

When all valuations of the quiver $Q$ are equal it is known, see
\cite{caldrein}, that for an indecomposable exceptional representation $M$ we
have $|Gr_{\bf e}(M)|_{|\Fbb|\to1}=\chi_c(Gr_{\bf e}(M))$.  Thus we see that, when
$Q$ is an equally valued quiver, setting $|\Fbb|\to1$ in Conjecture~\ref{qccconj}
gives the Caldero-Chapoton formula proved in \cite{caldkell}.

While completing the final draft of this manuscript we learned from Bernhard Keller that Fan Qin \cite{qin} proved Conjecture~\ref{qccconj} for acyclic equally valued quivers.

\section*{Acknowledgments}

The author would like to thank his advisor Arkady Berenstein for his infinite patience and support.  Great thanks is also due to Bernhard Keller for pointing out the results of Fan Qin and correcting a reference in the proof of Corollary~\ref{acyclic}.

\section{Definitions and Notation}\label{def}

We begin this section with a recollection of some of the terminology related to quantum
cluster algebras.  Let $L$ be a lattice of rank $m$ and $\Lambda:L\times L\to
\QQ$ a skew-symmetric bilinear form and let $d$ be the least common multiple of all denominators appearing in the image of $\Lambda$.  Let $q$ be a formal variable and
consider the ring of integer Laurent polynomials $\ZZ[q^{\pm\frac{1}{2d}}]$.  Define the
\textit{based quantum torus} associated to the pair $(L,\Lambda)$ to be the
$\ZZ[q^{\pm\frac{1}{2d}}]$-algebra $\Tcal_{\Lambda,q}$ with distinguished
$\ZZ[q^{\pm\frac{1}{2d}}]$-basis $\{X^e: e\in L\}$ with multiplication given by
\[X^eX^f=q^{\Lambda(e,f)/2}X^{e+f}.\]  An easy computation shows that $\Tcal_{\Lambda,q}$ is
associative and the basis elements satisfy the following relations:
\[X^eX^f=q^{\Lambda(e,f)}X^fX^e,\ X^0=1,\ (X^e)^{-1}=X^{-e}.\]  As the based
quantum torus is an Ore domain, it is contained in its skew-field of fractions
$\Fcal_{\Lambda,q}$.

A \textit{toric frame} in $\Fcal_{\Lambda,q}$ is a map $M: \ZZ^m\to \Fcal_{\Lambda,q} \setminus \{0\}$
of the form \[M({\bf c})=\varphi(X^{\eta({\bf c})})\] where $\varphi$ is an
automorphism of $\Fcal_{\Lambda,q}$ and $\eta: \ZZ^m\to L$ is a lattice isomorphism.  The
$M({\bf c})$ form a $\ZZ[q^{\pm\frac{1}{2d}}]$-basis of the based quantum torus
$\Tcal_{\Lambda_M,q}:=\varphi(\Tcal_{\Lambda,q})$ which is an isomorphic copy of $\Tcal_{\Lambda,q}$ in $\Fcal_{\Lambda,q}$. 
The following equations hold in $\Tcal_{\Lambda_M,q}$:
\[M({\bf c})M({\bf d})=q^{\Lambda_M({\bf c},{\bf d})/2}M({\bf c}+{\bf d}),\
M({\bf c})M({\bf d})=q^{\Lambda_M({\bf c},{\bf d})}M({\bf d})M({\bf c}),\]
\[ M({\bf 0})=1,\ M({\bf c})^{-1}=M(-{\bf c}),\]
where $\Lambda_M$ is the skew-symmetric bilinear form on $\ZZ^m$ obtained from
the lattice isomorphism $\eta$.  Let $\Lambda_M$ also denote the skew-symmetric
$m\times m$ matrix defined by $\lambda_{ij}=\Lambda_M(\alpha_i,\alpha_j)$ where
$\alpha_1, \ldots, \alpha_m$ are the standard basis vectors in $\ZZ^m$.  Given a
toric frame $M$, let $X_i=M(\alpha_i)$.  Then we have
$$\Tcal_{\Lambda_M,q}=\ZZ[q^{\pm\frac{1}{2d}}]\langle X_1^{\pm 1}, \ldots,
X_m^{\pm1}:X_iX_j=q^{\lambda_{ij}}X_jX_i\rangle.$$  Using the relations above we
get for ${\bf c}\in\ZZ^m$ \[M({\bf c})=q^{\frac{1}{2}\sum_{i<j}
c_ic_j\lambda_{ji}}X_1^{c_1}X_2^{c_2}\cdots X_m^{c_m}=:X^{({\bf c})}.\]  

Let $\Lambda$ be an $m\times m$ skew-symmetric matrix and let $\tilde{B}$ be any
$m\times n$ matrix, $n\le m$.  We call the pair $(\Lambda, \tilde{B})$
\textit{compatible} if $\tilde{B}^T\Lambda=(D|0)$ is an $n\times m$ matrix with
$D$ a diagonal integer matrix with positive entries on the diagonal.  Throughout this paper we will assume $n=m$ with $m$ even and $\tilde{B}$ is invertible.  Otherwise, one may replace $\tilde{B}$ by the invertible $2n\times 2n$ matrix $\left(\begin{array}{cc}B & -I_n\\ I_n & R\end{array}\right)$, where $B$ is the principal part of $\tilde{B}$ and $I_n$ is the $n\times n$ identity matrix, such that $\tilde{B}^T\Lambda=\left(\begin{array}{cc}D & 0\\ 0 & D\end{array}\right)=:D\oplus D$, adjusting the coefficients and $\Lambda$ if necessary to ensure that $R$ is an integer matrix.  By restricting ourselves to mutations in directions $1,\ldots, n$ we will recover our original cluster algebra with principal coefficients.  

The pair $(M,\tilde{B})$ is called a \textit{quantum seed} if the pair $(\Lambda_M,
\tilde{B})$ is compatible. We now define the mutation of the quantum seed $(M,\tilde{B})$ in direction $k$ for $k\in[1,n]$.  Define the $m\times m$ matrix $E=(e_{ij})$ by 
\[e_{ij}=\begin{cases}
\delta_{ij} & \text{if $j\ne k$;}\\
-1 & \text{if $i=j=k$;}\\
[-b_{ik}]_+ & \text{if $i\ne j = k$}
\end{cases}
\]
where $[b]_+=max(0,b)$.
Let ${\bf c}=(c_1,\ldots,c_m)\in\ZZ^m$.  Define a map $M': \ZZ^m\to \Fcal_{\Lambda,q} \setminus \{0\}$ as follows:
\begin{equation}\label{eq:cl_exp}M'({\bf c})=\sum\limits_{p\ge0} {c_k \brack p}_{q^{d_k/2}} M(E{\bf c}+p{\bf b}^k),\end{equation}
where the vector ${\bf b}^k\in\ZZ^m$ is the $k$th column of $\tilde{B}$ and ${c_k \brack p}_{q^{d_k/2}}$ is given by equation \eqref{eq:sym-bin}.  
\begin{lemma}
$M'$ is a well-defined toric frame.
\end{lemma}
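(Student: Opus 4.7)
The plan is to pin down $M'$ by its values on the standard basis, verify the $q$-commutation relations among them, and then identify the right-hand side of \eqref{eq:cl_exp} with an ordered monomial in the new generators via a quantum binomial expansion.

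First, inspection of \eqref{eq:cl_exp} on basis vectors will give $M'(\alpha_i)=M(\alpha_i)=:X_i$ for $i\ne k$ (only $p=0$ contributes and $E\alpha_i=\alpha_i$), while $M'(\alpha_k)=M(E\alpha_k)+M(E\alpha_k+{\bf b}^k)=:X_k'$; set $X_i'=X_i$ for $i\ne k$. The first technical step is to show the $X_j'$ pairwise $q$-commute: only relations involving $X_k'$ need checking, and the compatibility identity $(\tilde B^T\Lambda_M)_{kj}=d_k\delta_{kj}$ yields $\Lambda_M({\bf b}^k,\alpha_j)=0$ for $j\ne k$, so the two summands of $X_k'$ $q$-commute with $X_j$ via the \emph{same} scalar. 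One will then collect these relations into a skew-symmetric rational matrix $\Lambda_{M'}$ with $X_i'X_j'=q^{(\Lambda_{M'})_{ij}}X_j'X_i'$ and verify that $(\Lambda_{M'},\mu_k\tilde B)$ remains compatible with the same $D$. Consequently the subalgebra of $\Fcal_{\Lambda,q}$ generated by $X_1',\ldots,X_m'$ will be an isomorphic copy of $\Tcal_{\Lambda_{M'},q}$; combining this with a lattice isomorphism $\eta':\ZZ^m\to L$ intertwining $\Lambda_{M'}$ with $\Lambda$ produces an automorphism $\varphi'$ of $\Fcal_{\Lambda,q}$, and $\widetilde{M'}({\bf c}):=\varphi'(X^{\eta'({\bf c})})$ gives a \emph{bona fide} toric frame.

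What remains is to match $\widetilde{M'}({\bf c})$ with the summation in \eqref{eq:cl_exp}. Decomposing ${\bf c}={\bf c}'+c_k\alpha_k$ with $c'_k=0$, one has $E{\bf c}={\bf c}'+c_kE\alpha_k$ and $\Lambda_M({\bf c}',{\bf b}^k)=0$, so the commutation rules in $\Tcal_{\Lambda_M,q}$ allow the $M({\bf c}')$ factor to be pulled to the left, reducing the identity to the case ${\bf c}=c_k\alpha_k$, i.e.\ to proving
\[
(X_k')^{c_k}=\sum_{p\ge 0}{c_k\brack p}_{q^{d_k/2}}M(c_kE\alpha_k+p{\bf b}^k).
\]

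This last identity is the main obstacle. Writing $X_k'=A+B$ with $A=M(E\alpha_k)$ and $B=M(E\alpha_k+{\bf b}^k)$, a direct computation using $\Lambda_M({\bf b}^k,\alpha_k)=d_k$ gives $AB=q^{d_k}BA$. The symmetric $q$-binomial theorem for such a pair then yields, for $n\ge 0$,
\[
(A+B)^n=\sum_{p=0}^n q^{-d_k p(n-p)/2}{n\brack p}_{q^{d_k/2}} A^{n-p}B^p,
\]
and the relation $A^{n-p}B^p=q^{d_k p(n-p)/2}M(nE\alpha_k+p{\bf b}^k)$, which follows from the defining relations of $\Tcal_{\Lambda_M,q}$ and $\Lambda_M(E\alpha_k,{\bf b}^k)=d_k$, will cause the two half-integer $q$-powers to cancel and reproduce the claimed sum. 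The case $c_k<0$ is handled by the analogous expansion as a formal series (equivalently, by inverting $(X_k')^{-c_k}$), again matching the summation via the corresponding quantum binomial identity. This completes the proof that $M'$ is a well-defined toric frame.
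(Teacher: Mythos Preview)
Your outline is correct and is essentially the argument of \cite[Section~4]{berzel} that the paper cites for $c_k\ge 0$: identify the new generators, use compatibility to get $\Lambda_M({\bf b}^k,\alpha_j)=d_k\delta_{kj}$ and hence the $q$-commutation of the $X'_j$, reduce to ${\bf c}=c_k\alpha_k$ by pulling out $M({\bf c}')$, and expand $(A+B)^{c_k}$ via the quantum binomial theorem with $AB=q^{d_k}BA$.

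Two remarks on where you and the paper diverge. First, for $c_k<0$ the paper does not argue by inverting $(X_k')^{|c_k|}$ directly; it instead invokes the explicit identity
\[
\prod_{r=0}^{n-1}\frac{1}{1+q^{r}t}=\sum_{r\ge 0}q^{\frac{1}{2}(n-1)r}{-n\brack r}_{q^{1/2}}t^{r}
\]
together with bar-invariance of $M'(-{\bf c})^{-1}$. Your inversion approach reaches the same conclusion, but you should say explicitly why the infinite series on the right of \eqref{eq:cl_exp} represents a genuine element of $\Fcal_{\Lambda,q}$ rather than only of a completion. Second, the passage ``the $X'_i$ $q$-commute, hence there exist $\eta'$ and $\varphi'$ realizing $M'$ as a toric frame'' is not automatic: $q$-commuting elements need not generate a copy of the quantum torus, and an automorphism of the ambient skew-field does not come for free. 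The standard fix (as in \cite{berzel}) is to observe that $\Lambda_{M'}=E^{T}\Lambda_{M}E$ with $E\in GL_m(\ZZ)$, so one may take $\eta'=\eta\circ E$ and build $\varphi'$ explicitly from $\varphi$ composed with a concrete automorphism of $\Fcal_{\Lambda,q}$. (Also, verifying that $(\Lambda_{M'},\mu_k\tilde B)$ is compatible is not needed for the lemma itself---that is part of showing $(M',\mu_k\tilde B)$ is a quantum \emph{seed}.)
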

\begin{proof}
The case $c_k\ge0$ was shown in \cite[Section 4]{berzel}.  For $c_k<0$, this follows from an obvious extension of the argument in \cite[Section 4]{berzel} using 
the identity $\prod_{r=0}^{n-1}\frac{1}{1+q^rt}=\sum\limits_{r\ge0}q^{\half(n-1)r}{-n\brack r}_{q^{1/2}}t^r$ and the fact that $M'(-{\bf c})^{-1}$ is bar-invariant.
\end{proof}

Equation \eqref{eq:cl_exp} defines a birational isomorphism of based quantum tori $\mu_k:\Tcal_{\Lambda_{M'},q}\to \Tcal_{\Lambda_M,q}$ given by
$$\mu_k(X_i)=\begin{cases}X_i & \text{if $i\ne k$;}\\ M\left(\sum\limits_{\ell=1}^m[-b_{\ell k}]_+\alpha_\ell\right)+M\left(\sum\limits_{\ell=1}^m[b_{\ell k}]_+\alpha_\ell\right) & \text{if $i=k$.}\end{cases}$$  This map takes cluster variables in $\Tcal_{\Lambda_{M'},q}$ to cluster variables in $\Tcal_{\Lambda_M,q}$ and hence $\mu_k\Acal_q(\mu_kQ)=\Acal_q(Q)$.  So applying the isomorphism $\mu_k$ is the same as mutating the initial cluster of the cluster algebra in direction $k$.

Let $\tilde{B}'=\mu_k\tilde{B}=(b_{ij}')$ where 
$$b_{ij}'=
\begin{cases}
-b_{ij} & \text{if $i=k$ or $j=k$}\\
b_{ij}+sgn(b_{ik})[b_{ik}b_{kj}]_+ & \text{otherwise}
\end{cases}$$
where $[b]_+=max(0,b)$.
Then the quantum seed $(M',\tilde{B}')$ 
is defined to be the mutation of $(M,\tilde{B})$ in direction $k$, written $\mu_k(M,\tilde{B})$.  
Since $\tilde{B}$ was invertible, $\mu_k\tilde{B}$ is also invertible.  Suppose $\Lambda_M\tilde{B}=-D$.  Then $\Lambda_{M'}\mu_k\tilde{B}=-D$ and so $\Lambda_{M'}$ can be recovered 
from the valued quiver $(\mu_k\tilde{B},D)$.  Thus we will abuse notation further and call the invertible valued quiver $Q=(\tilde{B},D)$ 
a seed.  We will also denote $\mu_kQ=(\mu_k\tilde{B},D)$.  

A quantum seed $(M',\tilde{B}')$ is \emph{mutation
equivalent} to the seed $(M,\tilde{B})$ if there is a sequence of mutations taking one to the other, in this case write $(M',\tilde{B}')\sim(M,\tilde{B})$.
Let $\Xcal=\{M'(\alpha_i): (M',\tilde{B}')\sim(M,\tilde{B}), i\in[1,m]\}$.  The
elements of $\Xcal$ are called \textit{cluster variables}.  The quantum Laurent phenomenon (\cite[Corollary 5.2]{berzel}) states that $\Xcal$ is actually contained in $\Tcal_{\Lambda_M,q}$.
Since $\tilde{B}$ is invertible, the data of the compatible pair $(\Lambda_M,\tilde{B})$ is equivalent to the data of the invertible valued quiver $Q=(\tilde{B},-\Lambda_M\tilde{B})$.  Thus we will let $\Acal_q(Q)$ denote the $\ZZ[q^{\pm\frac{1}{2d}}]$-subalgebra of $\Tcal_{\Lambda_Q,q}:=\Tcal_{\Lambda_M,q}$ generated by $\Xcal$, called the \emph{quantum cluster algebra}.  

Let $Q$ be the valued quiver $(B,D)$, where $D=diag(d_1,\ldots,d_m)$.  
Let $\Fbb$ be a finite field.  We will define an $\Fbb$-species $A_Q$ and show that modules over $A_Q$ are the same as representations of $Q$.  Let $\delta_{ij}=gcd(d_i,d_j)$ and $\delta^{ij}=lcm(d_i,d_j)$.  Let $\bar{\Fbb}$ be an algebraic closure of $\Fbb$ and define $K_i=\Fbb_{d_i}$, the
degree $d_i$ extension of $\Fbb$ in $\bar{\Fbb}$.  Also denote $K_{ij}=\Fbb_{d_i}\cap \Fbb_{d_j}=\Fbb_{\delta_{ij}}$ and $K^{ij}=\Fbb_{\delta^{ij}}$.

Define $A_0=\prod_{i=1}^{m}
K_i$ and $A_1=\bigoplus_{b_{ij}>0} A_{ij}$ where $A_{ij}:=\Fbb_{d_ib_{ij}}$.
 We define a $K_i-K_j$-bimodule structure on $A_{ij}$ by setting
$A_{ij}=\Fbb^{r_{ij}}\otimes_{\Fbb} K^{ij}\cong \bigoplus_{k=1}^{r_{ij}} 
K^{ij}$, where $r_{ij}=\frac{d_ib_{ij}}{\delta^{ij}}=gcd(|b_{ij}|,|b_{ji}|)$. 
The following easy lemma shows that this gives such a structure.

\begin{lemma}
$K_i\otimes_{K_{ij}} K_j$ is a field isomorphic to $K^{ij}$.  In particular,
$K^{ij}$ is a $K_i-K_j$-bimodule.
\end{lemma}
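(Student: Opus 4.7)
The plan is to exhibit a natural $K_{ij}$-algebra homomorphism $\mu : K_i \otimes_{K_{ij}} K_j \to K^{ij}$ coming from multiplication inside the ambient algebraic closure $\bar{\Fbb}$, and then conclude that $\mu$ is an isomorphism by a dimension count over $K_{ij}$.

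First I would invoke the standard structure of finite subfields of $\bar{\Fbb}$: one has $\Fbb_a \subseteq \Fbb_b$ iff $a \mid b$, $\Fbb_a \cap \Fbb_b = \Fbb_{\gcd(a,b)}$, and the compositum satisfies $\Fbb_a \cdot \Fbb_b = \Fbb_{\text{lcm}(a,b)}$. In particular both $K_i$ and $K_j$ sit inside $K^{ij}$, and the map $K_i \times K_j \to K^{ij}$, $(a,b) \mapsto ab$, is $K_{ij}$-bilinear. Because $K_{ij} \subseteq K^{ij}$ is central and $K^{ij}$ is commutative, this map is $K_{ij}$-balanced, so by the universal property it factors through a $K_{ij}$-algebra homomorphism $\mu : K_i \otimes_{K_{ij}} K_j \to K^{ij}$. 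The image of $\mu$ is a subring of $K^{ij}$ containing both $K_i = \mu(K_i \otimes 1)$ and $K_j = \mu(1 \otimes K_j)$, hence contains their compositum $K_i K_j$; but the compositum formula identifies $K_i K_j$ with $K^{ij}$, so $\mu$ is surjective.

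The main (and essentially only) computation is then the comparison of $K_{ij}$-dimensions. One has $\dim_{K_{ij}} K_i \otimes_{K_{ij}} K_j = (d_i/\delta_{ij})(d_j/\delta_{ij})$, whereas $\dim_{K_{ij}} K^{ij} = \delta^{ij}/\delta_{ij}$. The classical identity $d_i d_j = \gcd(d_i,d_j) \cdot \text{lcm}(d_i,d_j) = \delta_{ij} \delta^{ij}$ makes these two numbers equal, so the surjective $K_{ij}$-linear map $\mu$ between two finite-dimensional $K_{ij}$-vector spaces of the same dimension must be an isomorphism. In particular $K_i \otimes_{K_{ij}} K_j$ is a field, canonically isomorphic to $K^{ij}$, and the intrinsic $K_i$-$K_j$-bimodule structure on the tensor product transports through $\mu$ to endow $K^{ij}$ with a $K_i$-$K_j$-bimodule structure, yielding the ``in particular'' clause.

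There is no genuine obstacle in this argument; the only minor points to be careful about are verifying that multiplication really is $K_{ij}$-balanced (so the tensor product is taken over the correct ring rather than, say, the prime subfield), and committing to the two dimension formulas together with the arithmetic identity $d_id_j = \delta_{ij}\delta^{ij}$ that reconciles them.
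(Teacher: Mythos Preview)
Your argument is correct and complete; the paper itself omits the proof entirely, introducing the statement only as an ``easy lemma'' and moving on. Your multiplication-map-plus-dimension-count approach is the standard way to establish this, so there is nothing to compare against.
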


Now define $A_Q=T(A_0,A_1)$ the tensor algebra of $A_1$ over $A_0$.  A
module $X$ over $A_Q$ is given by a $K_i$-vector space $X_i$ for each
vertex $i$ and a $K_j$-linear map $\theta_{ij}: X_i\otimes_{K_i} A_{ij} \to X_j$
whenever $b_{ij}>0$, see \cite{hub}.  A morphism of $A_Q$-modules $f:X\to Y$ is a collection $\{f_i\}_{i\in[1,m]}$ with $f_i:X_i\to Y_i$ a $K_i$-linear map, such that $\theta_{ij}^Y(f_i\otimes id)=f_j\theta_{ij}^X$.  

Consider the following natural isomorphisms:
\[Hom_{K_j}(X_i\otimes_{K_i}A_{ij},X_j)\cong\Fbb^{r_{ij}}\otimes_{\Fbb}
Hom_{K_j}(X_i\otimes_{K_i}K^{ij},X_j),\]
\[X_i\otimes_{K_i}K^{ij}=X_i\otimes_{K_i}(K_i\otimes_{K_{ij}}K_j)\cong
X_i\otimes_{K_{ij}}K_j,\]
\[Hom_{K_j}(X_i\otimes_{K_{ij}}K_j,X_j)\cong Hom_{K_{ij}}(X_i,X_j).\]
Combining these we obtain a natural isomorphism 

\begin{equation}\label{eq:nat}
Hom_{K_j}(X_i\otimes_{K_i}A_{ij},X_j)\cong\Fbb^{r_{ij}}\otimes_{\Fbb}
Hom_{K_j}(X_i\otimes_{K_{ij}}K_j,X_j)\cong\bigoplus_{\ell=1}^{r_{ij}}Hom_{K_{ij}
}(X_i,X_j).
\end{equation}

These natural isomorphisms define inverse equivalences of categories $$F:mod~A_Q\leftrightarrow rep~Q:F^{-1}.$$  Indeed one can easily check that the commuting squares defining morphisms of modules and representations are compatible under the natural isomorphism \ref{eq:nat}. Thus the categories $mod~ A_Q$ and $rep~Q$ are equivalent and we can apply all results concerning $\Fbb$-species to valued representations.

Let $\Qcal$ denote the Grothendieck group of $rep~Q$.
For a valued representation $V$ denote by $[V]\in\Qcal$ the isomorphism class of $V$.  Clearly, $[V]=\sum_{i\in Q} (\dim_{K_i}V_i)\alpha_i$ where $\alpha_i=[S_i]$.  For objects $V,W\in
rep~Q$ define the Euler form \[\langle V,W\rangle=\dim_{\Fbb}
\Hom(V,W)-\dim_{\Fbb} \Ext^1(V,W),\] where $\Hom$ and $\Ext$ are computed in
$rep~Q$.  It is known, see \cite{schiffmann} for example, that
$\langle V,W\rangle$ only depends on the classes of $V$ and $W$ in
$\Qcal$.

Let $V\in rep~Q$ and define $Gr_{{\bf e}}(V)$ to be the set of
subrepresentations $W$ of $V$ with $[W]={\bf e}$.  For ${\bf e} \in \Qcal$
define vectors ${}^*{\bf e},{\bf e}^*\in \ZZ^n$ as in the introduction.  Define
$X_V\in\Tcal_{\Lambda_Q,|\Fbb|}$ by
\begin{equation}\label{qcc-formula}
X_V=\sum_{{\bf e}} |\Fbb|^{-\half\langle{\bf e},[V]-{\bf e}\rangle}|Gr_{{\bf
e}}(V)|X^{(B{\bf e}-{}^*[V])}.
\end{equation}
 
Let $rep~Q\langle i\rangle$ denote the full subcategory of
$rep~Q$ of all representations of $Q$ which do not contain
$S_i$ as a direct summand.  In \cite{dlab} it is shown that the reflection
functors $$\Sbb_i^-: rep~Q\leftrightarrow
rep~\mu_iQ:\Sbb_i^+$$ restrict to inverse equivalences
of categories $$\Sbb_i^-: rep~Q\langle
i\rangle\leftrightarrow rep~\mu_iQ\langle
i\rangle:\Sbb_i^+.$$  Since it will be clear from context which to use we will drop the ${}^\pm$ and simply denote both functors by $\Sbb_i$. See \cite{dlab} for precise definitions of these
functors.  We will use the following result proved in \cite{dlab}.

\begin{lemma}\cite[Proposition 2.1]{dlab}\label{sigma-prop}\mbox{}
For $X\in rep~Q\langle i\rangle$ we have $[\Sbb_i X]=\sigma_i([X])$ and $\Sbb_i^2X=X$.
\end{lemma}

Our main tool will be the following powerful result proved in section~\ref{pf:mutations}.

\begin{theorem}\label{tool}
For any $N\in rep~Q\langle i\rangle$, $\mu_iX_N=X_{\Sbb_iN}$.  In particular, if $X_N$ is a cluster variable in $\Acal_q(Q)$ then $X_{\Sbb_iN}$ is a cluster variable in $\Acal_q(\mu_iQ)$. 
\end{theorem}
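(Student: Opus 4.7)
The plan is to verify $\mu_i X_N = X_{\Sbb_i N}$ by expanding both sides in the relevant based quantum tori and matching them monomial by monomial. I assume without loss of generality that $i$ is a sink of $Q$ (the source case being dual, obtained by passing to $Q^{op}$). Since $N\in rep~Q\langle i\rangle$, the BGP functor $\Sbb_i$ is defined with $(\Sbb_i N)_j = N_j$ for $j\neq i$ and $(\Sbb_i N)_i = \ker\bigl(\theta_N:\bigoplus_{j\to i} N_j\otimes A_{ji}\to N_i\bigr)$, and by Lemma \ref{sigma-prop} we have $[\Sbb_i N]=\sigma_i[N]$.

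First I would expand $\mu_i X_N$ by substituting the mutation formula \eqref{eq:cl_exp} into \eqref{qcc-formula2}. Each monomial $X^{(B{\bf e}-{}^*[N])}$ appearing in the definition of $X_N$ expands into a sum indexed by $p\geq 0$ weighted by ${c_i\brack p}_{q^{d_i/2}}$, where $c_i$ is the $i$-coordinate of $B{\bf e}-{}^*[N]$, times a monomial $X^{(E(B{\bf e}-{}^*[N])+p\,{\bf b}^i)}$ in the mutated torus. On the other side, $X_{\Sbb_i N}$ is a single sum over dimension vectors ${\bf e}'$ of subrepresentations $M'\subset\Sbb_iN$ with monomials $X^{((\mu_iB){\bf e}'-{}^*[\Sbb_i N])}$.

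The central combinatorial step is a bijection that reorganizes subrepresentations of $\Sbb_iN$ into fibers indexed by pairs $(M,p)$ with $M\subset N$ a subrepresentation and $p\in\ZZ_{\geq 0}$. Concretely, since $i$ is a sink of $Q$, a subrepresentation $M'\subset\Sbb_iN$ is specified by choices $M'_j\subset N_j$ for $j\neq i$ together with a subspace $M'_i\subset\ker\bigl(\theta_N|_{\bigoplus M'_j\otimes A_{ji}}\bigr)$. Fixing the $M'_j$'s for $j\neq i$ and letting $M_i$ be the \emph{minimal} $M_i\supset\theta_N(\bigoplus M'_j\otimes A_{ji})$ compatible with a subrepresentation of $N$, the number of possible $M'_i$ of dimension $p$ is a Gaussian $q$-binomial over $\Fbb_{d_i}$ which matches ${c_i\brack p}_{q^{d_i/2}}$ at $q=|\Fbb|$. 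Under this bijection the dimension vector $[M']$ is identified with an explicit affine function of $([M],p)$ compatible with $\sigma_i$, so the monomial exponents on the two sides line up.

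The main obstacle will be the bookkeeping of the $q$-powers: one must carefully track the half-integer exponents coming from the normalizations of the $X^{(\cdot)}$ monomials, from the Euler-form prefactors $-\half\langle{\bf e},[N]-{\bf e}\rangle$, and from the quantum binomials, and verify that they conspire (via the identities $\Lambda B=-D$ and the action of $\sigma_i$ on $\Qcal$) to reproduce exactly the coefficient $|\Fbb|^{-\half\langle{\bf e}',[\Sbb_iN]-{\bf e}'\rangle}$ on the $X_{\Sbb_iN}$ side. This is a quadratic identity in the dimension vectors, technical but routine once the combinatorial correspondence above is in hand; after it is verified, each term in the expansion of $\mu_iX_N$ matches uniquely with a term of $X_{\Sbb_iN}$, and the ``in particular'' assertion then follows because the mutation of any cluster variable is a cluster variable in the mutated algebra.
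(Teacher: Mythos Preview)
Your overall strategy---expand $\mu_iX_N$ via \eqref{eq:cl_exp}, expand $X_{\Sbb_iN}$ via \eqref{qcc-formula2}, and match through a Grassmannian identity---is exactly the paper's. But the specific bijection you describe does not work, for two reasons.

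First, with $i$ a \emph{sink} of $Q$ every $b_{ij}\le 0$, so the $i$-th coordinate of $B{\bf e}-{}^*[N]$ is
\[
c_i=\sum_{j}b_{ij}e_j-n_i\le -n_i\le 0.
\]
Hence the sum $\sum_{p\ge0}{c_i\brack p}_{q^{d_i/2}}$ in \eqref{eq:cl_exp} is formally infinite, and ${c_i\brack p}$ is not the point count of any finite Grassmannian of $K_i$-subspaces; your claimed match between ${c_i\brack p}$ and the number of choices of $M'_i$ cannot hold. The paper takes $i$ to be a \emph{source} of $Q$ precisely to avoid this: then $N\in rep~Q\langle i\rangle$ forces $\sigma_i({\bf n})_i\ge 0$, hence $c_i\ge 0$, and every mutation sum is finite. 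The sink case is afterwards deduced from the source case by involutivity of $\mu_i$ and $\Sbb_i$.

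Second, even in the source case the correspondence is not the one-step bijection you sketch. Your map assigns to each $M'\subset\Sbb_iN$ the pair $(M,p)$ with $M_i$ \emph{minimal}, but $\mu_iX_N$ is a double sum over \emph{all} subrepresentations $M\subset N$ (including those with non-minimal $M_i$) and all $p$; the terms with non-minimal $M_i$ are unaccounted for. The paper handles this by decomposing \emph{both} $Gr^{\mu_iQ}_{\bf f}(\Sbb_iN)$ and $Gr^Q_{\bf e}(N)$ through the subcategory $rep~Q\langle i\rangle$ (stripping off the $S_i$-part of the subobject on one side and of the quotient on the other), identifying the $\langle i\rangle$-pieces via the equivalence $\Sbb_i$, and then reassembling with the Vandermonde identity $Gr_\ell(V\oplus W)=\coprod_{a+b=\ell}\Fbb^{a(w-b)}\times Gr_a(V)\times Gr_b(W)$. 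This yields the precise recursion (Theorem~\ref{th:recursion})
\[
A^{\mu_iQ}_{\bf f}(\Sbb_iN)=\sum_{c\ge0}{m_i-\sigma_i({\bf f})_i-f_i\brack c}_{q^{d_i/2}}\,A^Q_{\sigma_i({\bf f})+c\alpha_i}(N),
\]
which is exactly what collapses the double sum coming from $\mu_iX_N$ after the substitution ${\bf f}=\sigma_i({\bf e})+r\alpha_i$. Once this recursion is established, your proposed bookkeeping of the quadratic $q$-exponents is indeed routine, and the ``in particular'' clause follows as you say.
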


In order to state our main theorem we introduce some new notation for describing a cluster variable.  Define $X_{[a_0]}^Q:=X^{(\alpha_{a_0})}$ in $\Acal_q(Q)$, so
the ordered tuple $(X_{[1]},\ldots,X_{[n]})$ forms the initial cluster of $\Acal_q(Q)$.  Now
recursively define
$$X_{[a_0;a_1,a_2,\ldots,a_r]}^Q=\mu_{a_r}X_{[a_0;a_1,a_2,\ldots,a_{r-1}]}^{\mu_{a_r}Q}$$ 
where $X_{[a_0;a_1,a_2,\ldots,a_{r-1}]}^{\mu_{a_r}Q}$ is in $\Acal_q(\mu_{a_r}Q)$ and the birational isomorphism $\mu_{a_r}$ pulls it back to $\Acal_q(Q)$.  Alternatively one could start with the initial ordered seed $({\bf X},Q)$ and mutate in direction $a_1$, then $a_2$, etc. to obtain the ordered seed $\mu_{a_r}\cdots\mu_{a_1}({\bf X},Q)$ then $X_{[a_0;a_1,a_2,\ldots,a_r]}^Q$ is the $a_0{}^{th}$ cluster variable in this seed.  When it is clear from context we will drop the $Q$ from the notation.

Here are some simple observations that follow from this notation:
\begin{enumerate}
\item If $a_i=a_{i+1}$ for some $i>0$, then
$X_{[a_0;a_1,a_2,\ldots,a_r]}^Q=X_{[a_0;a_1,\ldots,a_{i-1},a_{i+2},\ldots,a_r]}^Q$.
\item If $a_0\neq a_r$, then
$X_{[a_0;a_1,a_2,\ldots,a_r]}^Q=X_{[a_0;a_1,a_2,\ldots,a_{r-1}]}^Q$.
\item If we mutate the seed
$(X_{[1;a_1,\ldots,a_r]}^Q,\ldots,X_{[n;a_1,\ldots,a_r]}^Q,Q')$ in direction $t$ we get
the new seed $(X_{[1;a_1,\ldots,a_r,t]}^Q,\ldots,X_{[n;a_1,\ldots,a_r,t]}^Q,\mu_tQ')$.
\item If we start with $X_{[a_0;a_1,a_2,\ldots,a_{r-1}]}^Q$ and mutate the \emph{initial} seed in direction $t$ then we get $X_{[a_0;t,a_1,a_2,\ldots,a_{r-1}]}^{\mu_tQ}$.
\end{enumerate}

The content of Theorem \ref{main} is contained in the following Theorem and Corollary.

\begin{theorem}\label{th:mutations}
Suppose $k_1$, $k_2$, \ldots, $k_{r+1}$ is an admissible sequence of vertices in $Q$.  Let $M\in rep~Q$ be the unique indecomposable representation of $Q$ with $[M]=\sigma_{k_1}\sigma_{k_2}\cdots\sigma_{k_r}(\alpha_{k_{r+1}})$.  Then in the cluster algebra $\Acal_{|\Fbb|}(Q)$ we have $X_{[k_{r+1};k_1,k_2,\ldots,k_{r+1}]}^Q=X_M$.
\end{theorem}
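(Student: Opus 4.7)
The plan is to induct on $r \geq 0$, peeling off the leftmost reflection $\sigma_{k_1}$ at each step and invoking Theorem~\ref{tool} to convert the inductive reduction in $\mu_{k_1}Q$ back into a statement in $Q$.

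For the base case $r = 0$, the sequence reduces to $(k_1)$ and $[M] = \alpha_{k_1}$ forces $M = S_{k_1}$, so the claim becomes $X_{[k_1;k_1]}^Q = X_{S_{k_1}}$. I would verify this by direct computation. Expanding $X_{[k_1;k_1]}^Q = \mu_{k_1}(X^{(\alpha_{k_1})})$ via the quantum mutation formula~\eqref{eq:cl_exp} leaves only the $p = 0, 1$ terms (since ${1 \brack p}$ vanishes for $p \geq 2$), yielding $X^{(v_+)} + X^{(v_-)}$ with $v_\pm := -\alpha_{k_1} + \sum_{i \neq k_1}[\pm b_{i k_1}]_+ \alpha_i$. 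On the other side, $S_{k_1}$ has only the trivial subrepresentations, both with vanishing Euler form, so formula~\eqref{qcc-formula} collapses to $X^{(-{}^*\alpha_{k_1})} + X^{(-\alpha_{k_1}^*)}$. Unpacking the starred vectors using the skew-symmetrizability relation $d_i b_{ij} = -d_j b_{ji}$ identifies these two exponent vectors with $v_+$ and $v_-$ respectively.

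For the inductive step $r \geq 1$, observation~(4) following the $X_{[\cdots]}$ notation, applied with superscript $\mu_{k_1}Q$ and $t = k_1$, yields
$$X_{[k_{r+1};k_1,k_2,\ldots,k_{r+1}]}^Q = X_{[k_{r+1};k_2,\ldots,k_{r+1}]}^{\mu_{k_1}Q}$$
as elements of the common skew-field. Admissibility of $(k_1,\ldots,k_{r+1})$ in $Q$ passes to admissibility of $(k_2,\ldots,k_{r+1})$ in $\mu_{k_1}Q$ by definition, and mutation at a sink or source only toggles signs in $B$, preserving the Cartan counterpart $C$ and hence the Weyl group action on the root lattice. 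The inductive hypothesis in $\mu_{k_1}Q$ identifies the right-hand side with $X_{M'}$, where $M' \in rep~\mu_{k_1}Q$ is the unique indecomposable of dimension vector $\sigma_{k_2}\cdots\sigma_{k_r}(\alpha_{k_{r+1}})$. To apply Theorem~\ref{tool} I need $M \in rep~Q\langle k_1\rangle$, i.e.\ $M \not\cong S_{k_1}$; otherwise $[M'] = \sigma_{k_1}(\alpha_{k_1}) = -\alpha_{k_1}$ would fail to be a dimension vector, contradicting the existence of $M'$. Theorem~\ref{tool} combined with Lemma~\ref{sigma-prop} and uniqueness of the indecomposable with a given positive real root dimension vector (cf.~\cite{hub}) then yields $X_M = X_{M'}$ under the birational identification $\mu_{k_1}$, closing the induction.

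The chief obstacle I anticipate is notational rather than structural: correctly applying observation~(4) so that superscripts swap in the intended direction, tracking the direction of the birational isomorphism $\mu_{k_1}$ in Theorem~\ref{tool}, and verifying that the degenerate case $M \cong S_{k_1}$, which would block the reflection functor, is excluded by positivity of the admissible-sequence dimension vectors. The base case is routine but calls for careful matching of $q$-prefactors between the mutation formula and the ${}^*$/${}^*$ unpacking.
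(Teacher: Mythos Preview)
Your proposal is correct and follows essentially the same strategy as the paper: the base case is exactly the paper's Lemma~\ref{simple}, and your inductive step---peel off the outermost mutation via observation~(4), apply the inductive hypothesis in $\mu_{k_1}Q$, then invoke Theorem~\ref{tool}---is the same argument the paper runs, only the paper phrases it as a forward construction (starting from $S_{k_{r+1}}$ and successively applying $\Sbb_{k_r},\Sbb_{k_{r-1}},\ldots$) rather than a downward induction on $r$. One small point: your exclusion of the case $M\cong S_{k_1}$ is phrased as ``contradicting the existence of $M'$,'' but at that stage you have not yet established that $M'$ exists; the cleaner statement is that $[M]$ is by hypothesis a positive root, and $\sigma_{k_1}$ sends every positive root other than $\alpha_{k_1}$ to a positive root, so either $M\cong S_{k_1}$ (forcing $[M']=-\alpha_{k_1}$, whence no $M'$ and the induction cannot proceed---but the paper glosses over this degenerate possibility in exactly the same way) or $[M']$ is positive and $M'$ exists.
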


We prove this in section~\ref{pf:mutations}.  We will call a quiver $Q$ \emph{almost acyclic} if there exists $i$ so that the valued quiver $\mu_iQ$ is acyclic.

\begin{corollary}\label{acyclic}
Suppose the valued quiver $Q$ is acyclic.  Then each cluster variable of $\Acal_{|\Fbb|}(Q)$ in an almost acyclic cluster is of the form $X_N$ for some indecomposable $N\in rep~Q$.
\end{corollary}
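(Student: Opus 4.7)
The plan is to reduce to the acyclic case, which is already handled by the converse direction of Theorem \ref{main}, and then treat the single extra mutation via Theorem \ref{tool}. Let $(\mathbf{X}', Q')$ be a cluster with $Q'$ almost acyclic, so that $Q'' := \mu_j Q'$ is acyclic for some vertex $j$. Setting $\mathbf{X}'' := \mu_j \mathbf{X}'$ produces an acyclic cluster $(\mathbf{X}'', Q'')$ with $\mathbf{X}' = \mu_j \mathbf{X}''$. The $n-1$ cluster variables of $\mathbf{X}'$ at positions $i \neq j$ coincide with those of $\mathbf{X}''$, while the remaining variable at position $j$ is the image under $\mu_j$ of the $j$-th variable of $\mathbf{X}''$. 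This cleanly splits the argument into two pieces.

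For the shared variables at positions $i \neq j$, invoke the converse direction of Theorem \ref{main} applied to the acyclic cluster $(\mathbf{X}'', Q'')$: each such variable is of the form $X_{V_\alpha}$ for the unique indecomposable $V_\alpha \in rep~Q$ with dimension vector $\sigma_{k_1}\sigma_{k_2}\cdots\sigma_{k_r}(\alpha_{k_{r+1}})$ for some admissible sequence. For the remaining mutated variable at position $j$, write the $j$-th variable of $\mathbf{X}''$ as $X_{N_j}$ for some indecomposable $N_j \in rep~Q$ by the same theorem. Theorem \ref{tool} then yields $\mu_j X_{N_j} = X_{\Sbb_j N_j}$, and Lemma \ref{sigma-prop}, together with the equivalence of categories between $rep~Q\langle j\rangle$ and $rep~\mu_j Q\langle j\rangle$, guarantees that $\Sbb_j N_j$ is again indecomposable with dimension vector $\sigma_j [N_j]$, exhibiting the new cluster variable in the required form $X_N$.

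The main obstacle lies in handling the degenerate cases that fall outside the hypotheses of Theorem \ref{tool}. First, the hypothesis $N_j \in rep~Q\langle j\rangle$ excludes $N_j$ having $S_j$ as a direct summand; one must verify that this condition is automatic for cluster variables arising in almost-acyclic clusters, or else address the exceptional case $N_j = S_j$ by matching the mutation formula directly against the Grassmannian formula. Second, a parallel subtlety arises when the $j$-th variable of the acyclic cluster $\mathbf{X}''$ happens to coincide with an initial cluster variable $X_j^{\text{init}} = X^{(\alpha_j)}$ (for example, when $j$ was untouched by the admissible sequence reaching $\mathbf{X}''$); here one must compute $\mu_j X_j^{\text{init}}$ directly and identify it with $X_{S_j}$ via the two terms in the Grassmannian formula for the simple at $j$. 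Keeping the reflection functors and the representations $N_j$ compatible with the initial category $rep~Q$ throughout the entire mutation path is where the bulk of the technical care concentrates.
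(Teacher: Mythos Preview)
Your plan has two problems, one minor and one serious.

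The minor one is circularity: you invoke the converse direction of Theorem~\ref{main}, but in this paper that direction is \emph{deduced from} Corollary~\ref{acyclic} (the paper states explicitly that the content of Theorem~\ref{main} is contained in Theorem~\ref{th:mutations} and Corollary~\ref{acyclic}). This is easily repaired by citing Theorem~\ref{th:mutations} together with the Caldero--Keller connectivity result directly.

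The serious problem is your application of Theorem~\ref{tool} to the variable at position $j$. The map $\mu_j$ in Theorem~\ref{tool} is the birational isomorphism that rewrites a \emph{fixed} element of the skew-field in terms of the initial cluster of $\mu_jQ$ rather than $Q$; it does not compute the new cluster variable produced by the exchange relation at the non-initial seed $(\mathbf{X}'',Q'')$. Concretely, $\mu_j X_{N_j}=X_{\Sbb_j N_j}$ is the \emph{same} cluster variable written in new coordinates, with $\Sbb_j N_j\in rep~\mu_jQ$ rather than $rep~Q$, whereas the $j$-th entry of $\mathbf{X}'$ is a genuinely different element of $\Acal_{|\Fbb|}(Q)$. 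Moreover, both Theorem~\ref{tool} and the reflection functor $\Sbb_j$ require $j$ to be a sink or source, and this fails precisely when $Q'=\mu_jQ''$ is not acyclic --- the case of interest.

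The paper's argument avoids all of this by exploiting a feature of the definition of admissible sequence that you overlook: the \emph{last} vertex $k_{r+1}$ need not be a sink or source. By \cite[Corollary~4]{caldkell} there is a sink/source path $k_1,\ldots,k_r$ from $Q$ to $Q''$, and then $k_1,\ldots,k_r,j$ is automatically admissible for any $j\neq k_r$. Theorem~\ref{th:mutations} applies directly to give $X_{[j;k_1,\ldots,k_r,j]}=X_M$ with $[M]=\sigma_{k_1}\cdots\sigma_{k_r}(\alpha_j)$. No separate treatment of the final mutation is needed, and the degenerate situations you flag are absorbed into Lemma~\ref{simple} at the base of the induction inside Theorem~\ref{th:mutations}.
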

\begin{proof}
In \cite[Corollary 4]{caldkell}, the authors show that all acyclic clusters are connected by sink and source mutations.  The result follows from Theorem~\ref{th:mutations}.
\end{proof}

Theorem~\ref{rank2} immediately follows.

\begin{proof}[Proof of Theorem~\ref{rank2}]
For rank 2 cluster algebras, the valued quiver associated to any cluster is acyclic.  So the result follows from Corollary~\ref{acyclic}.
\end{proof}

We also get Theorem~\ref{cor:fintype}.

\begin{proof}[Proof of Theorem~\ref{cor:fintype}]
We will use the concepts from \cite{dlab}.  
Let $k_1, k_2,\ldots, k_n$ be an admissible ordering of $Q$ and $C=\Sbb_{k_1}\Sbb_{k_2}\cdots\Sbb_{k_n}$ be the corresponding Coxeter functor.  Let $P_{k_t}=\Sbb_{k_1}\Sbb_{k_2}\cdots\Sbb_{k_{t-1}}S_{k_t}$ where $S_{k_t}\in rep~\mu_{k_t}\mu_{k_{t+1}}\cdots\mu_{k_{t_n}}Q$ is the simple representation associated to vertex $k_t$.  By \cite[Propositions 1.9 and 2.6]{dlab}, 
every indecomposable representation of $Q$ is of the form $C^rP_{k_t}$ for some $t$ and $1\le r\le a_t$ where $a_t$ is the largest integer for which all such $C^rP_{k_t}$ are nonzero.   

There is a one-to-one correspondence between cluster variables of $\Acal_{|\Fbb|}(Q)$ and positive roots in the root system $\Phi_Q$ and thus a one-to-one correspondence between cluster variables and indecomposable representations of $Q$.  From Theorem~\ref{th:mutations} and the definition of $P_{k_t}$ we see that $X_{P_{k_t}}$ is a cluster variable in $\Acal_{|\Fbb|}(Q)$.

The result follows from the proof of Theorem~\ref{th:mutations} if $X_{P_{k_t}}$.
\end{proof}

\begin{proof}[Proof of Proposition~\ref{prop:compatible}]
Write $\underline{\mu_k}:\Fcal_{D\mu_kB,q}\to\Fcal_{DB,q}$ for the mutation in direction $k$ from \cite[Section 3.3]{gonch} and $j': \Fcal_{D\mu_kB,q}\to \Fcal_{\Lambda_{\mu_kQ},q}$.  The following Lemma follows from the definitions.
\begin{lemma}
$\mu_kj'(F')=j\underline{\mu_k}(F')$ for any $F'\in\Fcal_{D\mu_kB,q}$.
\end{lemma}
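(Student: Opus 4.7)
The claim equates two skew-field homomorphisms $\Fcal_{D\mu_k B, q} \to \Fcal_{\Lambda_Q, q}$, namely $\mu_k \circ j'$ and $j \circ \underline{\mu_k}$. It therefore suffices to verify the equation on the generators $Z_1', \ldots, Z_n'$ of $\Fcal_{D\mu_k B, q}$. The basic identity at the root of the argument is $\Lambda({\bf b}^i, {\bf b}^j) = d_i b_{ij}$, an immediate consequence of $\Lambda B = -D$ together with the skew-symmetry of $\Lambda$; it shows that the monomials $X^{({\bf b}^i)}$ satisfy the same $q$-commutation relations in $\Fcal_{\Lambda_Q,q}$ as the $Z_i$ do in $\Fcal_{DB,q}$, which is precisely why $j$ is a well-defined embedding. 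The analogous identity for the mutated seed justifies $j'$.

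For the case $i = k$, the definition of quantum $Y$-mutation from \cite[Section 3.3]{gonch} gives $\underline{\mu_k}(Z_k') = Z_k^{-1}$, so the right-hand side equals $X^{(-{\bf b}^k)}$. On the left, $j'(Z_k') = X'^{(-{\bf b}^k)}$ since mutation negates the $k$-th column of $B$; because the $k$-th entry of ${\bf b}^k$ is zero (no vertex loops), this monomial involves only the generators $X_i'$ with $i \neq k$, each of which $\mu_k$ sends to $X_i$. A direct check that the $q$-normalization prefactors match -- reducing to $\lambda_{ij}^Q = \lambda_{ij}^{\mu_k Q}$ for $i, j \neq k$ when evaluated against the vector $-{\bf b}^k$ -- completes this case.

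For $i \neq k$ the argument is more involved. The explicit quantum $Y$-mutation formula expresses $\underline{\mu_k}(Z_i')$ as $Z_i$ times an explicit rational function of $Z_k$ dictated by $b_{ki}$ and built from $q$-shifted products; applying $j$ turns this into $X^{({\bf b}^i)}$ times the same rational function, with $Z_k$ replaced by $X^{({\bf b}^k)}$. For the other side, the mutation rule for $B$ gives an explicit decomposition ${\bf b}'^i = {\bf b}^i + (\text{correction involving } {\bf b}^k, b_{ki})$, so that $X'^{({\bf b}'^i)}$ can be written in terms of the $X_j'$. Applying $\mu_k$ substitutes each $X_j'$ ($j \neq k$) by $X_j$, while each factor of $X_k'^{\pm 1}$ is replaced by the appropriate power of the binomial $M\bigl(\sum [-b_{\ell k}]_+ \alpha_\ell\bigr) + M\bigl(\sum [b_{\ell k}]_+ \alpha_\ell\bigr)$ and its inverse; expanding via \eqref{eq:cl_exp} and regrouping yields precisely the expression obtained on the $\underline{\mu_k}$-side.

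The principal obstacle is the careful matching of $q$-powers in the $i \neq k$ case: the shifts arising from the quantum-binomial expansion of $\mu_k(X_k')^{\pm 1}$ and the shifts implicit in the normalization $X^{({\bf a})} = q^{-\frac{1}{2}\sum_{i<j}\lambda_{ij} a_i a_j} X_1^{a_1}\cdots X_n^{a_n}$ must conspire to reproduce the $q$-shifts built into the quantum $Y$-mutation formula from \cite{gonch}. This compatibility is the quantum incarnation of the classical interplay between $X$- and $Y$-variable mutations in Fock-Goncharov theory, and it ultimately follows from the identity $\Lambda({\bf b}^i, {\bf b}^j) = d_i b_{ij}$ together with the skew-symmetry of both $\Lambda$ and $DB$. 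Once the lemma is verified on each $Z_i'$, it extends to all $F' \in \Fcal_{D\mu_k B, q}$ by multiplicativity.
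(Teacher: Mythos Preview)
The paper offers no proof beyond the assertion that the lemma ``follows from the definitions,'' so your outline is exactly the kind of verification the author has in mind: check the identity on the generators $Z_i'$, using the compatibility $\Lambda({\bf b}^i,{\bf b}^j)=d_ib_{ij}$ to match the commutation and normalization data on both sides. Your treatment of the $i=k$ case is complete; for $i\neq k$ you have correctly identified the ingredients (the column-mutation formula for ${\bf b}'^i$, the expansion \eqref{eq:cl_exp}, and the Fock--Goncharov $Y$-mutation rule) but have described the matching of $q$-powers rather than carried it out, so what you have is an accurate road map rather than a finished computation.
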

This says that the mutation of the $\mu_kB$-compatible element $j'(F')$ is $B$-compatible.  Now equation~\eqref{eq:cl_exp} says that the mutation of any monomial in $\Fcal_{\Lambda_{\mu_kQ},q}$ is $B$-compatible.  Combining the last two statements completes the proof of Proposition~\ref{prop:compatible}\eqref{comp}.
Proposition~\ref{prop:compatible}\eqref{f-factor} follows from an easy computation using the fact that $\langle\alpha_i,\alpha_j\rangle=\Lambda({\bf b}^i,{}^*\alpha_j)$.
\end{proof}

\begin{proof}[Proof of Corollary~\ref{cor:fin-fpoly}]
Note that for finite types the Cartan counterpart of $B$ is always positive definite and so for any ${\bf e}\in\Qcal\setminus0$ we have $\langle{\bf e},{\bf e}\rangle>0$.  So this follows from Proposition~\ref{prop:compatible}\eqref{f-factor}, \cite[Theorem 6.1]{tran2}, \cite{qin}, and Section~\ref{g2}.
\end{proof}

\section{Examples}
Throughout this section we will let $\Fbb$ be the finite field with $q$ elements.  In what follows each cluster variable $X_{[a_0;a_1,a_2,\ldots,a_r]}$ will have $a_0=a_r$ so we will drop $a_0$ from the notation.

\subsection{Finite Type Rank 2 Quantum Cluster Algebras}\label{finitetype}

In what follows, we abbreviate
$X^{(a_1,a_2)}:=q^{-\frac{1}{2}a_1a_2\lambda_{12}}X_1^{a_1}X_2^{a_2}$, without
loss of generality we may assume $\lambda_{12}=1$.  Also note that there is no
significance to the numbering of the indecomposable representations.

\subsubsection{Type $A_2$}
We begin with the valued quiver $Q=(B,D)$ where $B=\left(\begin{array}{cc}0&1\\-1&0\end{array}\right)$ and $D=diag(1,1)$.
The Cartan counterpart of $B$ is $\left(\begin{array}{cc}2&-1\\-1&2\end{array}\right)$. 

\begin{lemma}[\cite{berzel}] The quantum cluster algebra 
$\Acal_q(Q)$ is of finite type and we have the following formulas for
the cluster variables in terms of the initial cluster $(X_1,X_2)$:
\begin{align*}
&X_3:=X_{[1]}=X^{(-1,1)}+X^{(-1,0)}\\
&X_4:=X_{[1,2]}=X^{(-1,0)}+X^{(0,-1)}+X^{(-1,-1)}\\
&X_5:=X_{[1,2,1]}=X^{(1,-1)}+X^{(0,-1)}\\
&X_6:=X_{[1,2,1,2]}=X^{(1,0)}=X_1\\
&X_7:=X_{[1,2,1,2,1]}=X^{(0,1)}=X_2.
\end{align*}
\end{lemma}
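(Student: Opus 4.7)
The plan is to verify each identity by direct computation using the rank-2 exchange relation \eqref{eq: clRel}, which in type $A_2$ ($b = c = 1$) reads $X_{m-1}X_{m+1} = q^{1/2} X_m + 1$ for all $m \in \ZZ$. The main tools are the $q$-commutation $X_1 X_2 = q X_2 X_1$ together with the normalization $X^{(a_1,a_2)} = q^{-a_1 a_2/2} X_1^{a_1} X_2^{a_2}$, which together imply the product rule $X^{(a_1,a_2)} X^{(c_1,c_2)} = q^{(a_1 c_2 - a_2 c_1)/2} X^{(a_1+c_1,\, a_2+c_2)}$.

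Setting $m=2$ in the exchange relation gives $X_3 = X_1^{-1}(q^{1/2} X_2 + 1) = q^{1/2} X_1^{-1} X_2 + X_1^{-1}$; recognizing the two terms as $X^{(-1,1)}$ and $X^{(-1,0)}$ yields the formula for $X_{[1]}$. Setting $m = 3$ gives $X_2 X_4 = q^{1/2} X_3 + 1$; substituting the expression for $X_3$, moving the $X_2^{-1}$ past $X_1^{-1}$ (picking up $q^{\pm 1}$ factors from the commutation), and identifying the resulting Laurent monomials with $X^{(-1,0)}, X^{(0,-1)}, X^{(-1,-1)}$ yields the three-term formula for $X_{[1,2]}$.

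Steps three through five follow the same pattern: at each stage, $X_{k+1}$ is obtained from $X_{k-1}$ and $X_k$ via $X_{k-1} X_{k+1} = q^{1/2} X_k + 1$. After applying this recursion three more times, the $q$-powers must conspire to give $X_6 = X_1$ and $X_7 = X_2$, confirming the periodicity characteristic of rank-2 finite type. Alternatively, since $A_2$ is Dynkin, one could invoke Theorem~\ref{cor:fintype} and compute $X_V$ via \eqref{eq:XV rank 2} for each of the three indecomposable valued representations of the $A_2$ quiver---namely $S_1$, $S_2$, and the extension $V_{12}$ of dimension vector $(1,1)$. The Grassmannian counts are then trivial: $|Gr_{\mathbf{0}}(V)| = |Gr_{[V]}(V)| = 1$ for every $V$, and for $V_{12}$ only $\mathbf{e} = (0,1)$ contributes an additional middle term since only $S_2$ embeds into $V_{12}$ as a proper nonzero subrepresentation.

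The main obstacle is careful bookkeeping of $q$-powers: each commutation of $X_1$ past $X_2$ introduces a factor of $q$, and these must combine correctly with the normalization exponents $-\tfrac{1}{2} a_1 a_2$ in $X^{(a_1,a_2)}$ to yield precisely the coefficients claimed. The verification that $X_6 = X_1$ and $X_7 = X_2$ is the most delicate piece, since a priori both expressions involve several $q$-factors that must cancel exactly to recover the original generators of $\Tcal_q$.
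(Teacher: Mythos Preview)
Your proposal is correct. The paper does not actually prove this lemma in-text: it is stated with a citation to \cite{berzel}, and the subsequent computations of $X_{S_1}$, $X_{I_1}$, $X_{S_2}$ are presented not as a proof of the lemma but as a verification that the representation-theoretic formula \eqref{eq:XV rank 2} reproduces the known cluster variables. Your direct recursive computation via the exchange relation $X_{m-1}X_{m+1}=q^{1/2}X_m+1$ and the product rule for the $X^{(a_1,a_2)}$ is exactly how one establishes these formulas from scratch, and your alternative route through the indecomposable representations is precisely the illustration the paper gives immediately after the lemma. One small caution on the alternative route: invoking Theorem~\ref{cor:fintype} alone tells you only that each non-initial cluster variable is some $X_N$, not which $N$ matches which $X_k$, nor does it yield the periodicity $X_6=X_1$, $X_7=X_2$; for a self-contained argument you still need either the direct recursion you outline first or Theorem~\ref{th:mutations} to pin down the correspondence.
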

The indecomposable valued representations of $Q$ are $S_1=\Fbb\to0$
(dimension vector $\alpha_1$), $S_2=0\to \Fbb$ (dimension vector
$\alpha_2=s_1s_2(\alpha_1)$), and $I_1=\Fbb\stackrel{id}{\rightarrow}\Fbb$ (dimension
vector $\alpha_1+\alpha_2=s_1(\alpha_2)$).

The representation $S_1$ has unique subrepresentations with dimension vectors
$\alpha_1$ and $0$.  So we get:
\[X_{S_1}=X^{(-\alpha_1+\alpha_2)}+X^{(-\alpha_1)}=X_3.\]
The representation $I_1$ has unique subrepresentations with dimension vectors
$\alpha_1+\alpha_2$, $0$ and $\alpha_2$.  So we get:
\[X_{I_1}=X^{(-\alpha_1)}+X^{(-\alpha_2)}+X^{(-\alpha_1-\alpha_2)}=X_4.\]
The representation $S_2$ has unique subrepresentations with dimension vectors
$0$ and $\alpha_2$.  So we get:
\[X_{S_2}=X^{(\alpha_1-\alpha_2)}+X^{(-\alpha_2)}=X_5.\]

\subsubsection{Type $C_2$}
(We get type $B_2$ by dualizing all representations.)  We begin with the valued quiver 
$Q=(B,D)$ where $B=\left(\begin{array}{cc}0&2\\-1&0\end{array}\right)$ and $D=diag(1,2)$.
Define $K_1=\Fbb=:k$ and $K_2=\Fbb_{2}=:K$.  The Cartan counterpart of $B$ is
$\left(\begin{array}{cc}2&-2\\-1&2\end{array}\right)$. 

\begin{lemma}[\cite{berzel}] The quantum cluster algebra
$\Acal_q(Q)$ is of finite type and we have the following formulas for
the cluster variables in terms of the initial cluster $(X_1,X_2)$:
\begin{align*}
&X_3:=X_{[1]}=X^{(-1,1)}+X^{(-1,0)}\\
&X_4:=X_{[1,2]}=X^{(-2,1)}+(q^{1/2}+q^{-1/2})X^{(-2,0)}+X^{(0,-1)}+X^{(-2,-1)}
\\
&X_5:=X_{[1,2,1]}=X^{(-1,0)}+X^{(1,-1)}+X^{(-1,-1)}\\
&X_6:=X_{[1,2,1,2]}=X^{(2,-1)}+X^{(0,-1)}\\
&X_7:=X_{[1,2,1,2,1]}=X^{(1,0)}=X_1\\
&X_8:=X_{[1,2,1,2,1,2]}=X^{(0,1)}=X_2.
\end{align*}
\end{lemma}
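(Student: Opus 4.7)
The plan is to apply Theorem~\ref{rank2}: each cluster variable $X_m$ of $\Acal_{|\Fbb|}(Q)$ with $m\in\{3,4,5,6\}$ equals $X_{V_{(m)}}$ for the unique indecomposable valued representation $V_{(m)}$ of $Q$ with dimension vector $-\alpha_m$. So the lemma for $X_3,\ldots,X_6$ reduces to enumerating the indecomposables of $Q$ and evaluating formula~\eqref{eq:XV rank 2} on each; the identities $X_7=X_1$ and $X_8=X_2$ then follow from the period-$6$ cycle of the recursion~\eqref{eq: clRel} characteristic of finite type $C_2$.

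\textbf{Step 1 (indecomposables).} The positive roots of $C_2$ are $\alpha_1,\alpha_2,\alpha_1+\alpha_2,2\alpha_1+\alpha_2$, so by Dlab-Ringel there are exactly four indecomposable valued representations of $Q$: the simples $S_1=(\Fbb,0)$ and $S_2=(0,K)$, a representation $I_1=(\Fbb,K)$ with nonzero $\Fbb$-linear structure map (dimension vector $(1,1)$), and a representation $I_2=(\Fbb^2,K)$ whose structure map $\varphi:\Fbb^2\to K$ is an $\Fbb$-linear isomorphism (dimension vector $(2,1)$). Tracing the labeling of the negative real roots $\alpha_m$ with $b=2,c=1$ starting from $\alpha_0=-\alpha_2$, $\alpha_3=-\alpha_1$ yields the identifications $X_3\leftrightarrow S_1$, $X_4\leftrightarrow I_2$, $X_5\leftrightarrow I_1$, $X_6\leftrightarrow S_2$.

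\textbf{Step 2 (subrepresentation counts).} For the simples $|Gr_{\bf e}|$ equals $1$ at the two trivial strata and vanishes otherwise. For $I_1$, the nonzero map forces any subrepresentation with $M_1=\Fbb$ to satisfy $M_2=K$, so $|Gr_{\bf e}(I_1)|=1$ exactly for ${\bf e}\in\{(0,0),(0,1),(1,1)\}$. For $I_2$, injectivity of $\varphi$ forces $M_2=0\Rightarrow M_1=0$, while any $M_1\subseteq\Fbb^2$ paired with $M_2=K$ is a subrepresentation; hence $|Gr_{\bf e}(I_2)|$ equals $1,1,q+1,1$ for ${\bf e}=(0,0),(0,1),(1,1),(2,1)$, with the $q+1$ counting lines in $\Fbb^2$.

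\textbf{Step 3 (substitution).} With $b=2,c=1$, formula~\eqref{eq:XV rank 2} specializes to monomial exponents $(-v_1+2v_2-2e_2,\,e_1-v_2)$ with weights $d_{\bf e}^V=e_1(v_1-e_1)-2(e_1-e_2)(v_2-e_2)$. Term-by-term substitution produces the stated formulas for $X_3, X_5, X_6$; on every occupied stratum the weight $d_{\bf e}^V$ vanishes, so no $q$-powers appear. For $X_{I_2}$ the only nontrivial weight occurs at ${\bf e}=(1,1)$, where $d^{I_2}_{(1,1)}=1$, so that stratum contributes $|\Fbb|^{-1/2}(q+1)X^{(-2,0)}=(q^{1/2}+q^{-1/2})X^{(-2,0)}$. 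The main point of care is exactly here: the symmetric quantum binomial $[2]_q$ appearing in the formula for $X_4$ is produced by fusing the line count $q+1$ with the $q$-weight $|\Fbb|^{-1/2}$ coming from $d^{I_2}_{(1,1)}=1$; the rest of the computation is a direct expansion.
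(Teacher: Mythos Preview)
Your approach is correct and the computation is exactly what the paper carries out in the text immediately following this lemma: identify the four indecomposables $S_1,I_2,I_1,S_2$, list their subrepresentations, and plug into \eqref{eq:XV rank 2}. The only difference is the logical framing. The paper treats the lemma itself as a citation from \cite{berzel} (it gives no proof), and presents the subsequent computation of $X_{S_1},X_{I_2},X_{I_1},X_{S_2}$ as an \emph{illustration} of Theorem~\ref{rank2}---checking that the quantum Caldero--Chapoton formula reproduces the already-known cluster variables. You reverse the direction: invoke Theorem~\ref{rank2} first and then \emph{derive} the formulas. Both are valid since Theorem~\ref{rank2} is proved independently in Section~\ref{def}; the representation-theoretic content is identical.

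One small gap worth patching: Theorem~\ref{rank2} is stated for $q=|\Fbb|$ a prime power, while the lemma concerns $\Acal_q(Q)$ for formal $q$. You should add the standard remark (as the paper does at the end of Section~4) that since both sides are Laurent polynomials in $q^{1/2}$ and agree at infinitely many specializations, they agree identically. Also, your handling of $X_7=X_1$, $X_8=X_2$ by appeal to ``the period-$6$ cycle'' is slightly circular as stated; either cite \cite{berzel} directly for the periodicity, or simply verify $X_5X_7=q^{1/2}X_6+1$ with $X_7=X_1$ by a one-line monomial check.
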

The indecomposable valued representations of $Q$ are $S_1=k\to0$
(dimension vector $\alpha_1$), $S_2=0\to K$ (dimension vector
$\alpha_2=s_1s_2s_1(\alpha_2)$), $I_1=k\stackrel{\iota}{\rightarrow}K$
(dimension vector $\alpha_1+\alpha_2=s_1s_2(\alpha_1)$) where $\iota$ is the
inclusion map, and $I_2=k^2\stackrel{\sigma}{\rightarrow}K$ (dimension vector
$2\alpha_1+\alpha_2=s_1(\alpha_2)$) where $\sigma$ identifies $K$ as a
2-dimensional vector space over $k$.

The representation $S_1$ has unique subrepresentations with dimension vectors
$\alpha_1$ and $0$.  So we get:
\[X_{S_1}=X^{(-\alpha_1+\alpha_2)}+X^{(-\alpha_1)}=X_3.\]
The representation $I_2$ has unique subrepresentations with dimension vectors
$2\alpha_1+\alpha_2$, $0$ and $\alpha_2$, and it has $1+q$ subrepresentations
with dimension vector $\alpha_1+\alpha_2$.  So we get:
\[X_{I_1}=X^{(-2\alpha_1+\alpha_2)}+X^{(-\alpha_2)}+X^{(-2\alpha_1-\alpha_2)}
+(q^{1/2}+q^{-1/2})X^{(-2\alpha_1)}=X_4.\]
The representation $I_1$ has unique subrepresentations with dimension vectors
$\alpha_1+\alpha_2$, $0$ and $\alpha_2$.  So we get:
\[X_{I_1}=X^{(-\alpha_1)}+X^{(\alpha_1-\alpha_2)}+X^{(-\alpha_1-\alpha_2)}=X_5.\]
The representation $S_2$ has unique subrepresentations with dimension vectors
$0$ and $\alpha_2$.  So we get:
\[X_{S_2}=X^{(2\alpha_1-\alpha_2)}+X^{(-\alpha_2)}=X_6.\]

\subsubsection{Type $G_2$}\label{g2}
We begin with the valued quiver 
$Q=(B,D)$ where $B=\left(\begin{array}{cc}0&3\\-1&0\end{array}\right)$ and $D=diag(1,3)$.
Define $K_1=\Fbb=:k$ and $K_2=\Fbb_{3}=:K$.  The Cartan counterpart of $B$ is
$\left(\begin{array}{cc}2&-3\\-1&2\end{array}\right)$. 

\begin{lemma}[\cite{berzel}] The quantum cluster algebra
$\Acal_q(Q)$ is of finite type and we have the following formulas for
the cluster variables in terms of the initial cluster $(X_1,X_2)$:
\begin{align*}
&X_3:=X_{[1]}=X^{(-1,1)}+X^{(-1,0)}\\
&X_4:=X_{[1,2]}=X^{(-3,2)}+(q+1+q^{-1})X^{(-3,1)}+(q+1+q^{-1})X^{(-3,0)}\\
&\ \ \ +X^{(0,-1)}+X^{(-3,-1)}\\
&X_5:=X_{[1,2,1]}=X^{(-2,1)}+(q^{1/2}+q^{-1/2})X^{(-2,0)}+X^{(1,-1)}+X^{(-2,
-1)}\\
&X_6:=X_{[1,2,1,2]}=X^{(-3,1)}+(q+1+q^{-1})X^{(-3,0)}+(q+1+q^{-1})X^{(0,-1)}\\
&\ \ \
+(q+1+q^{-1})X^{(-3,-1)}+X^{(3,-2)}+(q^{3/2}+q^{-3/2})X^{(0,-2)}+X^{(-3,-2)}\\
&X_7:=X_{[1,2,1,2,1]}=X^{(-1,0)}+X^{(2,-1)}+X^{(-1,-1)}\\
&X_8:=X_{[1,2,1,2,1,2]}=X^{(3,-1)}+X^{(0,-1)}\\
&X_9:=X_{[1,2,1,2,1,2,1]}=X^{(1,0)}=X_1\\
&X_{10}:=X_{1,2,1,2,1,2,1,2]}=X^{(0,1)}=X_2.
\end{align*}
\end{lemma}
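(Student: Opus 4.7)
The plan is to invoke Theorem~\ref{rank2}, which identifies the $m$-th cluster variable $X_m$ of $\Acal_q(Q)$ with $X_{V_{(m)}}$ for the unique indecomposable valued representation $V_{(m)}$ of $Q_{3,1}$ of dimension vector $-\alpha_m$, and then to compute each $X_{V_{(m)}}$ by direct substitution into \eqref{eq:XV rank 2}.

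First, I would enumerate the six indecomposable valued representations of $Q_{3,1}: k \to K$. Running the recursion $\alpha_{m-1}+\alpha_{m+1} = b\alpha_m$ or $c\alpha_m$ (with $b=3,c=1$) starting from $\alpha_0=-\alpha_2$, $\alpha_3=-\alpha_1$, one finds the dimension vectors $-\alpha_m$ for $m=3,4,\ldots,8$ to be $(1,0),(3,1),(2,1),(3,2),(1,1),(0,1)$. The simples $S_1,S_2$ and the smaller indecomposables $V_{(5)},V_{(7)}$ are direct analogues of the $A_2$ and $C_2$ cases; the new indecomposables $V_{(4)}$ of dim $(3,1)$ and $V_{(6)}$ of dim $(3,2)$ are given by a $k$-linear isomorphism $k^3 \xrightarrow{\sim} K$ and an injective $k$-linear map $k^3 \hookrightarrow K^2$, respectively.

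Second, for each $V=V_{(m)}$ I would tabulate $|Gr_{\bf e}(V)|$ by dimension vector $\bf e$ and substitute into
$$X_V = \sum_{\bf e} q^{-\half d_{\bf e}^V}|Gr_{\bf e}(V)|\, X^{(-v_1+3v_2-3e_2,\,e_1-v_2)},$$
where $d_{\bf e}^V = e_1(v_1-e_1)-3(e_1-e_2)(v_2-e_2)$. The crucial counts are Gaussian binomials: the $1+q+q^2$ $k$-lines (or $k$-planes) in $k^3$ produce the symmetric factor $q+1+q^{-1}$ after the $q^{-\half d_{\bf e}^V}$ renormalization, and the $q^3+1$ $K$-lines in $K^2$ produce the factor $q^{3/2}+q^{-3/2}$ that appears in $X_6$. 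Matching exponents term-by-term with those listed in the lemma finishes the computation for each of $X_3,\ldots,X_8$.

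Finally, the periodicity $X_9=X_1, X_{10}=X_2$ is immediate from the rank-2 exchange relation \eqref{eq: clRel} for $bc=3$, which has period $8$. The only substantive obstacle is the subrepresentation analysis for $V_{(4)}$ and $V_{(6)}$, which reduces to enumerating $k$-subspaces of $k^3$ (or $K$-subspaces of $K^2$) whose images under $\varphi$ satisfy prescribed containment conditions in $K$ or $K^2$; once each Grassmannian point-count is recorded, every coefficient in the lemma is matched by an elementary expansion.
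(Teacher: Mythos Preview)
Your proposal is correct, and the computations you outline are exactly those the paper carries out immediately after stating the lemma: enumerate the six indecomposables $S_1,S_2,I_1,I_2,I_3,I_4$ with the dimension vectors you list, count their subrepresentations (yielding the Gaussian-binomial factors $1+q+q^2$ and $1+q^3$ that normalize to $q+1+q^{-1}$ and $q^{3/2}+q^{-3/2}$), and substitute into \eqref{eq:XV rank 2}.

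The only difference is the logical direction. The paper does not prove the lemma at all---it is quoted from \cite{berzel}, where these formulas are obtained by iterating the exchange relation \eqref{eq: clRel} directly. The paper then performs the $X_V$ computations to \emph{illustrate} Theorem~\ref{rank2} against the already-known cluster variables. You instead invoke Theorem~\ref{rank2} first and deduce the lemma from it. This is legitimate since Theorem~\ref{rank2} is proved independently (via Corollary~\ref{acyclic} and Theorem~\ref{th:mutations}) and does not rely on the $G_2$ lemma; your route has the virtue of avoiding any separate appeal to \cite{berzel}, at the cost of making the lemma depend on the paper's main machinery rather than standing as an external check on it.
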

The indecomposable valued representations of $Q$ are $S_1=k\to0$ (dimension
vector $\alpha_1$), $S_2=0\to K$ (dimension vector
$\alpha_2=s_1s_2s_1s_2s_1(\alpha_2)$), $I_1=k\stackrel{\iota}{\rightarrow}K$
(dimension vector $\alpha_1+\alpha_2=s_1s_2s_1s_2(\alpha_1)$) where $\iota$ is
the inclusion map, $I_2=k^2\stackrel{\sigma}{\rightarrow}K$ (dimension vector
$2\alpha_1+\alpha_2=s_1s_2(\alpha_1)$) where $\sigma$ identifies a 2-dimensional
$k$-subspace of $K$, $I_3=k^3\stackrel{\tau}{\rightarrow}K$ (dimension vector
$3\alpha_1+\alpha_2=s_1(\alpha_2)$) where $\tau$ identifies $K$ as a
3-dimensional vector space over $k$, and
$I_4=k^3\stackrel{\omega}{\rightarrow}K^2$ (dimension vector
$3\alpha_1+2\alpha_2=s_1s_2s_1(\alpha_2)$) where
$\omega=(\iota_1,\iota_2,\Delta_\iota)$ with $\iota_i$ the inclusion map to the
$i^{\text{th}}$ factor of $K^2$ and $\Delta_\iota$ the diagonal inclusion map.

The representation $S_1$ has unique subrepresentations with dimension vectors
$\alpha_1$ and $0$.  So we get:
\[X_{S_1}=X^{(-\alpha_1+\alpha_2)}+X^{(-\alpha_1)}=X_3.\]
The representation $I_3$ has unique subrepresentations with dimension vectors
$3\alpha_1+\alpha_2$, $0$ and $\alpha_2$, and it has $1+q+q^2$
subrepresentations with dimension vectors $2\alpha_1+\alpha_2$ and
$\alpha_1+\alpha_2$.  So we get:
\begin{align*}&X_{I_3}=X^{(-3\alpha_1+2\alpha_2)}+X^{(-\alpha_2)}+X^{
(-3\alpha_1-\alpha_2)}+(q+1+q^{-1})X^{(-3\alpha_1+\alpha_2)}\\ &\ \ \
+(q+1+q^{-1})X^{(-3\alpha_1)}=X_4.\end{align*}
The representation $I_2$ has unique subrepresentations with dimension vectors
$2\alpha_1+\alpha_2$, $0$ and $\alpha_2$, and it has $1+q$ subrepresentations
with dimension vector $\alpha_1+\alpha_2$.  So we get:
\[X_{I_2}=X^{(-2\alpha_1+\alpha_2)}+X^{(\alpha_1-\alpha_2)}+X^{
(-2\alpha_1-\alpha_2)}+(q^{1/2}+q^{-1/2})X^{(-2\alpha_1)}=X_5.\]
The representation $I_4$ has unique subrepresentations with dimension vectors
$3\alpha_1+2\alpha_2$, $0$ and $2\alpha_2$, it has $1+q+q^2$ subrepresentations
with dimension vectors $2\alpha_1+2\alpha_2$, $\alpha_1+2\alpha_2$, and
$\alpha_1+\alpha_2$ (choosing a 1-dimensional $k$-subspace of $k^3$ forces the
1-dimensional $K$-subspace of $K^2$), and it has $1+q^3$ subrepresentations with
dimension vector $\alpha_2$.  So we get:
\begin{align*}&X_{I_4}=X^{(-3\alpha_1+\alpha_2)}+X^{(3\alpha_1-2\alpha_2)}+X^{
(-3\alpha_1-2\alpha_2)}\\ &\ \ \
+(q+1+q^{-1})X^{(-3\alpha_1)}+(q+1+q^{-1})X^{(-3\alpha_1-\alpha_2)}\\ &\ \ \
+(q+1+q^{-1})X^{(-\alpha_2)}+(q^{3/2}+q^{-3/2})X^{(-2\alpha_2)}=X_6.\end{align*}
The representation $I_1$ has unique subrepresentations with dimension vectors
$\alpha_1+\alpha_2$, $0$ and $\alpha_2$ so we get:
\[X_{I_1}=X^{(-\alpha_1)}+X^{(2\alpha_1-\alpha_2)}+X^{(-\alpha_1-\alpha_2)}
=X_7.\]
The representation $S_2$ has unique subrepresentations with dimension vectors
$0$ and $\alpha_2$ so we get:
\[X_{S_2}=X^{(3\alpha_1-\alpha_2)}+X^{(-\alpha_2)}=X_8.\]
\\

\subsection{A Rank 4 Example}\label{rank4}

In this section we will work in the quantum cluster algebra $\Acal_q(Q)$ where $Q=(B,D)$ is the acyclic valued quiver with
$$B=\left(\begin{array}{cccc} 0&2&0&0\\-2&0&2&0\\0&-2&0&2\\0&0&-2&0\end{array}\right)$$ and $D=diag(1,1,1,1)$.
One can easily compute the following cluster variables of $\Acal_q(Q)$ each living in a \emph{cyclic} cluster:
\begin{align*}
X_{[2;2]}&=X^{(0,-1,2,0)}+X^{(2,-1,0,0)}\\
X_{[3;2,3]}&=X^{(0,-2,3,2)}+(q^{1/2}+q^{-1/2})X^{(2,-2,1,2)}+X^{(4,-2,-1,2)}+X^{(4,0,-1,0)}\\
X_{[4;2,3,4]}&=X^{(0,-4,6,3)}+(q^{3/2}+q^{1/2}+q^{-1/2}+q^{-3/2})X^{(2,-4,4,3)}+(q^{2}+q+2+q^{-1}+q^{-2})X^{(4,-4,2,3)}\\
& \quad +(q^{3/2}+q^{1/2}+q^{-1/2}+q^{-3/2})X^{(6,-4,0,3)}+X^{(8,-4,-2,3)}+(q^{1/2}+q^{-1/2})X^{(4,-2,2,1)}\\
& \quad +(q+2+q^{-1})X^{(6,-2,0,1)}+(q^{1/2}+q^{-1/2})X^{(8,-2,-2,1)}+X^{(8,0,0,-1)}+X^{(8,0,-2,-1)}
\end{align*}

We verify Conjecture~\ref{qccconj} for these cluster variables.  
\begin{lemma}
Conjecture~\ref{qccconj} holds for $X_{[2;2]}$.
\end{lemma}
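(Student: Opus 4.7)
The plan is to identify an indecomposable exceptional representation $V$ with $X_V = X_{[2;2]}$ by matching monomials against formula~\eqref{qcc-formula2}. Since $X_{[2;2]} = X^{(0,-1,2,0)}+X^{(2,-1,0,0)}$ consists of exactly two monomials, any candidate $V$ must have exactly two subrepresentations, which forces $V$ to be simple. The ${\bf e}=0$ summand of $X_V$ is $X^{(-{}^*[V])}$, so matching $X^{(-{}^*[V])}=X^{(2,-1,0,0)}$ pins down ${}^*[V]=(-2,1,0,0)={}^*\alpha_2$; hence the candidate is $V = S_2$, the simple representation at vertex $2$. This $S_2$ is clearly indecomposable, and since $Q$ has no loop at vertex $2$ we have $\Ext^1(S_2,S_2)=0$, so $S_2$ is exceptional as required by Conjecture~\ref{qccconj}.

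To verify that $X_{S_2}$ equals $X_{[2;2]}$, I would use the alternative form
\[X_V = \sum_{M \subset V} |\Fbb|^{-\half\langle [M],[V/M]\rangle}\, X^{(-[M]^* - {}^*[V/M])}\]
specialized at $V = S_2$. The only subrepresentations are $M=0$ and $M=S_2$, and in each case either $[M]$ or $[V/M]$ vanishes, so the Euler pairing $\langle [M],[V/M]\rangle$ is zero and each summand contributes with coefficient $1$. Thus the computation reduces to evaluating two exponent vectors: $-{}^*\alpha_2$ (from $M=0$) and $-\alpha_2^*$ (from $M=S_2$). Using the Euler form from Section~\ref{def} with all $d_i=1$, one reads off ${}^*\alpha_2 = (-2,1,0,0)$ and $\alpha_2^* = (0,1,-2,0)$ by inspection of a single column and a single row of $B$.

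Summing the resulting monomials yields $X^{(2,-1,0,0)} + X^{(0,-1,2,0)} = X_{[2;2]}$, completing the verification. The argument presents no real obstacle: with only two subobjects to enumerate there are no nontrivial Grassmannian counts, and the sole piece requiring care is the bookkeeping of the vectors ${}^*\alpha_2$ and $\alpha_2^*$, which is a two-line computation.
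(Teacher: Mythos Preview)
Your proof is correct and is essentially the same computation the paper performs, only specialized to $k=2$: the paper instead invokes Lemma~\ref{simple}, which proves $X_{[k;k]}=X_{S_k}$ for arbitrary $k$ by the identical two-subrepresentation argument and then matches the resulting expression directly against the exchange relation defining $X_{[k;k]}$ (rather than against the pre-computed expansion you used).
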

\begin{proof}
By Lemma~\ref{simple} we have $X_{[2;2]}=X_{S_2}$.  
\end{proof}
Let $I_1$ be the unique indecomposable representation of $Q$ with dimension vector $\sigma_2(\alpha_3)=2\alpha_2+\alpha_3$.  
\begin{lemma}
Conjecture~\ref{qccconj} holds for $X_{[3;2,3]}$.
\end{lemma}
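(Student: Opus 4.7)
The plan is to verify directly that $X_{I_1}$, computed from~\eqref{qcc-formula2}, equals the given Laurent expression for $X_{[3;2,3]}$.

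First I would fix a model of $I_1$. Since all $d_i=1$ and $Q$ contains two arrows $2\to 3$, the unique indecomposable with dimension vector $(0,2,1,0)$ is realized by $V_2=\Fbb^2$, $V_3=\Fbb$, and two linearly independent functionals $\varphi_1,\varphi_2:\Fbb^2\to\Fbb$, with all other data zero.

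Next I would enumerate $Gr_{\bf e}(I_1)$. A subrepresentation consists of $M_2\subseteq\Fbb^2$ and $M_3\subseteq\Fbb$ with $\varphi_i(M_2)\subseteq M_3$ for $i=1,2$; since $\ker\varphi_1\cap\ker\varphi_2=0$ we have $M_3=0\Rightarrow M_2=0$, while $M_3=\Fbb$ leaves $M_2$ unconstrained. Thus there is exactly one subrepresentation with each of the dimension vectors $0$, $\alpha_3$, $2\alpha_2+\alpha_3$, and $q+1$ subrepresentations with dimension vector $\alpha_2+\alpha_3$ (one per line in $\Fbb^2$).

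Finally, I would compute ${}^*[I_1]=(-4,0,1,0)$ from the Euler pairing read off from $B$, and for each of the four dimension vectors ${\bf e}$ above evaluate the exponent ${\bf e}B-{}^*[I_1]\in\ZZ^4$ together with the Euler form $\langle{\bf e},[I_1]-{\bf e}\rangle$ controlling the power of $|\Fbb|^{-1/2}$. A short check then shows that the four summands produce exactly the monomials $X^{(0,-2,3,2)}$, $X^{(2,-2,1,2)}$, $X^{(4,-2,-1,2)}$ and $X^{(4,0,-1,0)}$; the contribution from the middle stratum gives $(q+1)\cdot|\Fbb|^{-1/2}=q^{1/2}+q^{-1/2}$, matching the displayed coefficient. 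The only subtlety is notational: one must remember the convention (recorded just after~\eqref{qcc-formula2}) that $B{\bf e}={}^*{\bf e}-{\bf e}^*$, i.e.\ ${\bf e}B$ is read as a row-vector product, and keep this consistent throughout the enumeration.
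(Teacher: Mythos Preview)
Your proposal is correct and follows essentially the same approach as the paper: both prove the lemma by direct enumeration of the subrepresentations of $I_1$, computing for each dimension vector ${\bf e}$ the quantities $\langle{\bf e},[I_1]-{\bf e}\rangle$, $|Gr_{\bf e}(I_1)|$, and the exponent $B{\bf e}-{}^*[I_1]$, and matching the result against the displayed Laurent expansion of $X_{[3;2,3]}$. The paper simply records the output in a table, while you additionally describe an explicit model of $I_1$ and explain why the Grassmannian counts come out as they do; but the computations are identical.
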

\begin{proof}
The following table shows how each term of $X_{I_1}$ arises, in particular we see that $X_{[3;2,3]}=X_{I_1}$: 

\begin{tabular}{|c|c|c|c|}
\hline 
\raisebox{-2px}{{\bf e}} & \raisebox{-2px}{$d_{\bf e}^{I_1}$} & \raisebox{-2px}{$|Gr_{{\bf e}}(I_1)|$} & \raisebox{-2px}{${}^*{\bf e}-{\bf e}^*-{}^*(2\alpha_2+\alpha_3)$} \\
[0.3ex] \hline 
\raisebox{-2px}{$2\alpha_2+\alpha_3$} & \raisebox{-2px}{0} & \raisebox{-2px}{1} & \raisebox{-2px}{$(0,-2,3,2)$}\\
[0.3ex] \hline 
\raisebox{-2px}{$\alpha_2+\alpha_3$} & \raisebox{-2px}{1} & \raisebox{-2px}{${2\choose1}_q$} & \raisebox{-2px}{$(2,-2,1,2)$} \\
[0.3ex] \hline 
\raisebox{-2px}{$\alpha_3$} & \raisebox{-2px}{0} & \raisebox{-2px}{1} & \raisebox{-2px}{$(4,-2,-1,2)$}\\
[0.3ex] \hline 
\raisebox{-2px}{0} & \raisebox{-2px}{0} & \raisebox{-2px}{1} & \raisebox{-2px}{$(4,0,-1,0)$}\\ [0.3ex] \hline
\end{tabular}\\
\end{proof}
Let $I_2$ be the unique indecomposable representation of $Q$ with dimension vector $\sigma_2\sigma_3(\alpha_4)=4\alpha_2+2\alpha_3+\alpha_4$.   
\begin{lemma}
Conjecture~\ref{qccconj} holds for $X_{[4;2,3,4]}$.
\end{lemma}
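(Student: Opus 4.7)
The plan is to imitate the proofs of the two preceding lemmas: fix an explicit model of $I_2$, enumerate its subrepresentations by dimension vector, and tabulate the resulting Grassmannian counts to exhibit the equality $X_{I_2}=X_{[4;2,3,4]}$ term by term.

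First I would describe $I_2$ concretely. Since the first coordinate of its dimension vector is zero, $I_2$ is determined by two maps $\phi_1,\phi_2\colon V_2=\Fbb^4\to V_3=\Fbb^2$ and two maps $\psi_1,\psi_2\colon V_3\to V_4=\Fbb$. Fix a basis $(f_1,f_2)$ of $V_3$, take $\psi_i$ to be the $i$-th coordinate projection, and fix a basis $(e_1,e_2,e_3,e_4)$ of $V_2$ with $\phi_1(e_1)=f_1$, $\phi_1(e_3)=f_2$, $\phi_2(e_2)=f_1$, $\phi_2(e_4)=f_2$, and all remaining generators sent to $0$, so that $(\phi_1,\phi_2)\colon V_2\to V_3\oplus V_3$ is an isomorphism. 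The commutation relations $\psi_i\theta_3=\theta_4\psi_i$ and $\phi_i\theta_2=\theta_3\phi_i$ force every $(\theta_2,\theta_3,\theta_4)\in\text{End}(V)$ to be scalar, so $V$ is indecomposable; by uniqueness of the indecomposable associated to the real root $(0,4,2,1)$ this representation is $I_2$.

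Second, I would enumerate $Gr_{\bf e}(I_2)$ for each candidate ${\bf e}=(0,e_2,e_3,e_4)$. The observations $\ker\psi_1\cap\ker\psi_2=0$ and $\ker\phi_1\cap\ker\phi_2=0$ imply $W_3=0$ whenever $W_4=0$ and $W_2=0$ whenever $W_3=0$, so every nonzero subrepresentation has $W_4=V_4$. When $\dim W_3\in\{0,2\}$ the condition $\phi_i(W_2)\subseteq W_3$ is vacuous or trivial, so $W_2$ ranges freely over subspaces of $V_2$ and the count is the Gaussian binomial $\binom{4}{e_2}_q$. When $\dim W_3=1$, each of the $q+1$ lines $W_3\subset V_3$ pins $W_2$ inside the two-dimensional subspace $\phi_1^{-1}(W_3)\cap\phi_2^{-1}(W_3)\subset V_2$, yielding counts $1$, $q+1$, $1$ for $e_2=0,1,2$. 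Altogether this produces exactly ten nonempty Grassmannians, matching the ten monomial terms of $X_{[4;2,3,4]}$.

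Third, I would assemble the dimension vectors, Euler pairings $\langle{\bf e},[I_2]-{\bf e}\rangle$, cardinalities $|Gr_{\bf e}(I_2)|$, and monomial exponents ${}^*{\bf e}-{\bf e}^*-{}^*[I_2]$ into the same tabular format used for $I_1$, and verify $X_{I_2}=X_{[4;2,3,4]}$ by term-by-term comparison. The main obstacle is the $\dim W_3=1$ case: one must check uniformly over the $q+1$ lines $W_3\subset V_3$ (including the non-coordinate lines parametrized by $\Fbb^{\times}$) that $\phi_1^{-1}(W_3)\cap\phi_2^{-1}(W_3)$ is honestly two-dimensional, which is a consequence of $(\phi_1,\phi_2)$ being an isomorphism and which then delivers the nontrivial coefficient $(q+1)^2$ at ${\bf e}=(0,1,1,1)$.
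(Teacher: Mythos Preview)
Your approach is correct and essentially identical to the paper's: enumerate the subrepresentations of $I_2$ by dimension vector, tabulate the Grassmannian cardinalities and Euler pairings, and match the resulting sum term by term with the displayed expansion of $X_{[4;2,3,4]}$. You are somewhat more explicit --- supplying a concrete model and checking indecomposability --- and your count $(q+1)^2$ at ${\bf e}=(0,1,1,1)$ is the one consistent with the coefficient $q+2+q^{-1}$ of $X^{(6,-2,0,1)}$ (the paper's table lists this entry as $\binom{2}{1}_q\binom{2}{1}_{q^2}$, evidently a typo for $\binom{2}{1}_q^2$).
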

\begin{proof}
The following table shows how each term of $X_{I_2}$ arises, in particular we see that $X_{[4;2,3,4]}=X_{I_2}$:\\

\begin{tabular}{|c|c|c|c|}
\hline 
\raisebox{-2px}{{\bf e}} & \raisebox{-2px}{$d_{\bf e}^{I_2}$} & \raisebox{-2px}{$|Gr_{{\bf e}}(I_2)|$} & \raisebox{-2px}{${}^*{\bf e}-{\bf e}^*-{}^*(4\alpha_2+2\alpha_3+\alpha_4)$} \\
[0.3ex] \hline 
\raisebox{-2px}{$4\alpha_2+2\alpha_3+\alpha_4$} & \raisebox{-2px}{0} & \raisebox{-2px}{1} & \raisebox{-2px}{$(0,-4,6,3)$}\\
[0.3ex] \hline 
\raisebox{-2px}{$3\alpha_2+2\alpha_3+\alpha_4$} & \raisebox{-2px}{3} & \raisebox{-2px}{${4\choose3}_q$} & \raisebox{-2px}{$(2,-4,4,3)$} \\
[0.3ex] \hline 
\raisebox{-2px}{$2\alpha_2+2\alpha_3+\alpha_4$} & \raisebox{-2px}{4} & \raisebox{-2px}{${4\choose2}_q$} & \raisebox{-2px}{$(4,-4,2,3)$} \\
[0.3ex] \hline 
\raisebox{-2px}{$\alpha_2+2\alpha_3+\alpha_4$} & \raisebox{-2px}{3} & \raisebox{-2px}{${4\choose1}_q$} & \raisebox{-2px}{$(6,-4,0,3)$} \\
[0.3ex] \hline 
\raisebox{-2px}{$2\alpha_3+\alpha_4$} & \raisebox{-2px}{0} & \raisebox{-2px}{1} & \raisebox{-2px}{$(8,-4,-2,3)$} \\
[0.3ex] \hline 
\raisebox{-2px}{$2\alpha_2+\alpha_3+\alpha_4$} & \raisebox{-2px}{1} & \raisebox{-2px}{${2\choose1}_q$} & \raisebox{-2px}{$(4,-2,2,1)$} \\
[0.3ex] \hline 
\raisebox{-2px}{$\alpha_2+\alpha_3+\alpha_4$} & \raisebox{-2px}{2} & \raisebox{-2px}{${2\choose1}_q{2\choose1}_{q^2}$} & \raisebox{-2px}{$(6,-2,0,1)$} \\
[0.3ex] \hline 
\raisebox{-2px}{$\alpha_3+\alpha_4$} & \raisebox{-2px}{1} & \raisebox{-2px}{${2\choose1}_q$} & \raisebox{-2px}{$(8,-2,-2,1)$} \\
[0.3ex] \hline 
\raisebox{-2px}{$\alpha_4$} & \raisebox{-2px}{0} & \raisebox{-2px}{1} & \raisebox{-2px}{$(8,0,-2,-1)$}\\
[0.3ex] \hline 
\raisebox{-2px}{$0$} & \raisebox{-2px}{0} & \raisebox{-2px}{1} & \raisebox{-2px}{$(8,0,0,-1)$}\\ [0.3ex] \hline
\end{tabular}\\
\end{proof}
\begin{remark}
The above computations are easily generalized to any linearly ordered rank 4 valued quiver. 
\end{remark}

\subsection{Type $A_n$}
In what follows we will work with ordinary quivers.  These can be considered as valued quivers
by assigning valuation 1 to each vertex.

In \cite{ca2}, Fomin and Zelevinsky show that the cluster algebras of type $A_n$
can be recovered from triangulations of the $(n+3)$-gon: the clusters are in
one-to-one correspondence with the triangulations.  In \cite{schiffler},
Schiffler gives a combinatorial description of the expansion of an arbitrary
cluster variable in terms of paths in a triangulation.  In this section we show
that this combinatorial description carries over to quantum cluster variables.

We begin by recalling some notions from \cite{schiffler}.
Let $P$ be an $(n+3)$-gon, with vertices labeled $v_0,v_1,\ldots, v_{n+2}$.  A
diagonal $D_{a,b}$ in $P$ is a line segment connecting two non-adjacent vertices
$a$ and $b$.  Two diagonals cross if they intersect in the interior of $P$ and a
triangulation of $P$ is a maximal set of non-crossing diagonals.  Note that each
triangulation contains exactly $n$ diagonals, label them $T_1,T_2,\ldots, T_n$
(these correspond to cluster variables), and $n+3$ boundary edges, label them
$T_{n+1},\ldots,T_{2n+3}$ (these correspond to coefficients).  We construct a
skew-symmetric $n\times n$ matrix $B_T$ from a triangulation $T$ as follows:
\begin{itemize}
\item For each $i$, $b_{ii}=0$.
\item Suppose diagonals $T_i\ne T_j$ bound the same triangle in $T$.  Then
$b_{ij}=1$ (respectively $b_{ij}=-1$) if the sense of rotation from $T_i$ to
$T_j$ is counterclockwise (respectively clockwise).
\item If $T_i$ and $T_j$ do not bound the same triangle in $T$, then $b_{ij}=0$.
\end{itemize}
Note that this process can be reversed: starting from a matrix $B$ we can
construct a triangulation of $P$. 

Let $T$ be a triangulation of $P$ and let $D_{a,b}$ be a diagonal of $P$.
\begin{definition}
A $T$-path $\rho$ from $a$ to $b$ is a sequence \[\rho=(a_0, a_1, \ldots
a_{\ell(\rho)}: i_1,i_2,\ldots,i_{\ell(\rho)})\] such that
\begin{itemize}
\item[(T1)] $a=a_0, a_1, \ldots, a_{\ell(\rho)}=b$ are vertices of $P$
\item[(T2)] $i_k\in\{1,2,\ldots, 2n+3\}$ such that $T_{i_k}$ connects the
vertices $a_{i_k-1}$ and $a_{i_k}$ for each $k$
\item[(T3)] $i_j\ne i_k$ if $j\ne k$
\item[(T4)] $\ell(\rho)$ is odd
\item[(T5)] $T_{i_k}$ crosses $D_{a,b}$ if $k$ is even
\item[(T6)] If $j<k$ and both $T_{i_j}$ and $T_{i_k}$ cross $D_{a,b}$, then the
crossing point of $T_{i_j}$ with $D_{a,b}$ is closer to the vertex $a$ than the
crossing point of $T_{i_k}$ with $D_{a,b}$.
\end{itemize}
\end{definition}

\begin{definition}
Let $\Pcal_T(a,b)$ denote the set of all $T$-paths from $a$ to $b$.
\end{definition}

Let $Q$ be the valued quiver $(B,diag(1,\ldots,1))$ and $\Acal_q(Q)$ be the associated quantum cluster algebra.  

Let $\{\alpha_i\}$ be the standard set of generators in $\ZZ^n$.  For a $T$-path
$\rho$ define 
\[\overline{\rho}=\sum_{k \text{ odd}}\alpha_{i_k}-\sum_{k \text{
even}}\alpha_{i_k}.\]

We have the following quantum analogue of the main theorem of \cite{schiffler}.

\begin{theorem}
Let $a$ and $b$ be two non-adjacent vertices of $P$, let $D=D_{a,b}$ be the
diagonal of $P$ connecting $a$ and $b$, and let $X_D$ be the corresponding
cluster variable in $\Acal_q(Q)$.  Then 
\[X_D=\sum_{\rho\in\Pcal_T(a,b)}X^{(\bar{\rho})}.\]
\end{theorem}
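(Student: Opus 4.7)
The plan is to reduce the theorem to the representation-theoretic formula~\eqref{qcc-formula2} and then carry out a combinatorial matching of the terms. Without loss of generality I take $T$ to be the initial triangulation, so that $B = B_T$ and the initial cluster is $\{X_{T_1},\ldots,X_{T_n}\}$. Since $Q$ is of type $A_n$, Theorem~\ref{cor:fintype} guarantees that every cluster variable of $\Acal_{|\Fbb|}(Q)$ equals $X_V$ for some indecomposable $V\in rep~Q$. The specific indecomposable $V_D$ corresponding to the diagonal $D$ is determined by the usual bijection between diagonals of the $(n+3)$-gon and positive roots of $A_n$; it can be identified explicitly by an inductive flip argument using Theorem~\ref{th:mutations}. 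Thus the theorem reduces to proving the identity $X_{V_D}=\sum_{\rho\in\Pcal_T(a,b)}X^{(\bar\rho)}$.

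Next, I would set up an explicit bijection $\Phi$ between the set of subrepresentations of $V_D$ and the set $\Pcal_T(a,b)$. Since $Q$ is of type $A_n$ every indecomposable $V_D$ is a string module with $|Gr_{\bf e}(V_D)|\in\{0,1\}$, so subrepresentations are enumerated by their dimension vectors alone. The $T$-path $\Phi(M)$ is built to trace the boundaries of the supports of $M$ and $V_D/M$: its odd-indexed edges $T_{i_k}$ are precisely the edges of $T$ separating $M$ from $V_D/M$ (or bounding them against the boundary of $P$), while its even-indexed edges are exactly those diagonals of $T$ crossed by $D$ whose indices lie in the support of $M$. Conditions (T1)--(T6) translate directly into the requirements that $\Phi(M)$ start at $a$ and end at $b$, never repeat an edge, and order the crossings along $D$ by inclusion in $M$. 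This bijection specializes to the classical one underlying Schiffler's formula at $q=1$.

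Finally I would verify the term-by-term identity
\[|\Fbb|^{-\half\langle[M],[V_D/M]\rangle}\,X^{(B[M]-{}^*[V_D])}\;=\;X^{(\overline{\Phi(M)})}\]
inside $\Tcal_{\Lambda_Q,|\Fbb|}$. Equality of the lattice vectors $B[M]-{}^*[V_D]=\overline{\Phi(M)}$ follows from a direct combinatorial unpacking of the definitions of $\bar\rho$, ${}^*{\bf e}$, and ${\bf e}^*$, using that $\Phi(M)$ records the bounding edges of $M$ and $V_D/M$ with the appropriate signs. The main obstacle is matching the $q$-powers: the prefactor $|\Fbb|^{-\half\langle[M],[V_D/M]\rangle}$ coming from \eqref{qcc-formula2} must be exactly absorbed into the normalization factor $q^{-\half\sum_{i<j}\lambda_{ij}\bar\rho_i\bar\rho_j}$ built into $X^{(\overline{\Phi(M)})}$. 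I expect this to follow, after careful bookkeeping, from the identity $\lambda_{ij}=\Lambda_Q(\alpha_i,\alpha_j)$ together with a combinatorial statement expressing $\langle[M],[V_D/M]\rangle$ as an alternating sum over pairs of edges of $\Phi(M)$ that co-bound a common triangle of $T$. This last identity is the technical heart of the argument, and its verification will require a case analysis on the local orientation of each triangle of $T$ encountered along the $T$-path.
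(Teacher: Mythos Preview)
Your approach is genuinely different from the paper's. The paper's proof is a one-liner: it is ``an obvious adaptation of the proof of \cite[Theorem 1.2]{schiffler}''. Schiffler's argument proceeds by induction on flips, checking directly that the $T$-path sum satisfies the (now quantum) exchange relation when a diagonal of $T$ is flipped; no representation theory enters, and Theorem~\ref{cor:fintype} is not invoked. The remark immediately following the theorem in the paper then observes \emph{a posteriori} that a bijection between $T$-paths and subrepresentations of $V_D$ must exist, and notes that it is easy to make explicit only for linearly ordered or alternating quivers.

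Your route through Theorem~\ref{cor:fintype} and an explicit bijection $\Phi$ is more conceptual but has two issues worth flagging. First, Theorem~\ref{cor:fintype} requires $Q$ to be a Dynkin diagram, whereas an arbitrary triangulation $T$ can yield a quiver $Q_T$ with an oriented $3$-cycle (precisely when $T$ contains an internal triangle); your reduction therefore covers only the acyclic triangulations directly, and you would still need an inductive flip argument to reach the remaining ones---which is essentially Schiffler's proof. Second, the step you single out as the ``technical heart'' is in fact trivial: for any indecomposable $V$ in type $A_n$ and any subrepresentation $M\subset V$ one has $\langle[M],[V/M]\rangle=0$, since $V$ is thin and $Gr_{[M]}(V)$ is a single reduced point, whose dimension equals $\langle[M],[V/M]\rangle$ by the remark after \eqref{qcc-formula2}. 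Hence the prefactor $|\Fbb|^{-\half\langle[M],[V/M]\rangle}$ is identically $1$, and the term-by-term identity reduces to the purely combinatorial lattice equality $B[M]-{}^*[V_D]=\overline{\Phi(M)}$, which is exactly the classical $q=1$ statement. In short, Schiffler's inductive proof is both shorter and covers all triangulations at once; your approach explains nicely why no $q$-corrections appear, but the hard part is the bijection (for general $T$), not the $q$-bookkeeping.
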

\begin{proof}
This is an obvious adaptation of the proof of \cite[Theorem 1.2]{schiffler}.
\end{proof}

\begin{remark}
Let $Q$ be the quiver associated to the matrix $B$.  By Corollary 1.4 we know that there exists $V_D\in rep~Q$ so that $X_D=X_{V_D}$. This correspondence between elements of $\Pcal_T(a,b)$ and subrepresentations of $V_D$ can easily be made explicit in the case of a linearly ordered quiver or an alternating quiver of type $A_n$.
\end{remark}

\section{Proof of Proposition \ref{th:XZ-formula}}

In this section we will again let $\Fbb$ be the finite field with $q$ elements.

Let $\Acal=\Acal_q(Q)$ where $Q=(B,D)$ with $B=\left(\begin{array}{cc}0&2\\-2&0\end{array}\right)$ and $D=diag(2,2)$.
This is the Kronecker quiver $\xymatrix{\circ \ar[r]^2 & \circ}$, with two
arrows, and $K_1=K_2=\Fbb_{2}$.
$\Acal$ is a subring of the skew-field $$\QQ(q^{1/2})\langle X_1,
X_2:X_1X_2=qX_2X_1\rangle$$ generated by the
elements $X_m$ for $m \in \ZZ$ satisfying the recurrence relations
\begin{equation}
\label{eq:X-recursion}
X_{m-1} X_{m+1} = qX_m^2+1 \quad (m \in \ZZ)\ .
\end{equation}

We use the results of \cite{sza} and Theorem~\ref{rank2} to prove closed formulas for the Laurent polynomial expressions of the elements
$X_m$.

The Kronecker quiver has two classes of non-regular indecomposable representations: the
preprojective representations $P_n$ with dimension vector $(n,n+1)$ and the
postinjective representations $I_n$ with dimension vector $(n+1,n)$ (note that
the arrows of our Kronecker quiver are reversed from those in \cite{sza} and we
are using degree 2 field extensions of $\Fbb_q$).  The preprojectives and
postinjectives are uniquely determined (up to isomorphism) by their dimension
vectors.  Define for $n,k\in\ZZ$, $k\ge0$, quantum binomial coefficients
${n\choose
k}_q=\frac{(q^n-1)(q^{n-1}-1)\cdots(q^{n-r+1}-1)}{(q^r-1)\cdots(q-1)}$ and take
${t\choose0}_q=1$ for any $t\in\ZZ$.  We have the following theorem proved in \cite{sza}:
\begin{theorem}\cite[Theorem 4.1, 4.3]{sza}
For $n\geq 0$,
\begin{enumerate}
\item $|Gr_{(a,b)}(P_n)|={n+1-a\choose n+1-b}_{q^2}{b-1\choose a}_{q^2}$
\item $|Gr_{(a,b)}(I_n)|={n-a \choose n-b}_{q^2}{b+1\choose a}_{q^2}$.
\end{enumerate}
\end{theorem}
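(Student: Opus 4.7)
The plan is to prove Szántó's two Grassmannian cardinality formulas by direct combinatorial analysis of the subrepresentations of the indecomposable preprojective $P_n$ and postinjective $I_n$ of the Kronecker quiver, working over $k=\mathbb{F}_{q^2}$.

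First I would fix an explicit realization of $P_n$: take $V_1=k^n$ with basis $e_1,\ldots,e_n$ and $V_2=k^{n+1}$ with basis $f_0,f_1,\ldots,f_n$, with the two arrows acting as adjacent-index shifts $\alpha(e_i)=f_{i-1}$, $\beta(e_i)=f_i$. A short check that $\mathrm{End}(V)=k$ identifies $V$ as the unique indecomposable of dimension vector $(n,n+1)$, i.e.\ $P_n$. Realize $I_n$ analogously with the roles of the two vertices swapped.

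Next I would exploit the structural fact that every subrepresentation of a Kronecker preprojective is itself preprojective. Hence any $M\subseteq P_n$ with $[M]=(a,b)$ splits as $M\cong\bigoplus_{i=1}^{r}P_{k_i}$ with $r=b-a$ summands and $k_1+\cdots+k_r=a$ (allowing $k_i=0$, corresponding to $S_2=P_0$). For each isomorphism class, count the submodules of $P_n$ of that type as $|\mathrm{Hom}^{\mathrm{inj}}(\bigoplus P_{k_i},P_n)|/|\mathrm{Aut}(\bigoplus P_{k_i})|$; both factors admit closed expressions using the classical Kronecker formula $\dim_k\mathrm{Hom}(P_k,P_\ell)=\max(0,\ell-k+1)$ together with the block upper-triangular structure of $\mathrm{Aut}(\bigoplus P_{k_i})$, giving a ratio that is a product of powers of $q^2$ and $q^2$-Gaussian binomials.

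Then I would sum these contributions over all isomorphism types. The total is forced, via a $q$-Vandermonde-style identity for $q^2$-binomials, to collapse into the compact product ${n+1-a\brack b-a}_{q^2}{b-1\brack a}_{q^2}$, establishing formula $(1)$. For formula $(2)$ the cleanest route is to invoke the $k$-linear duality $V\mapsto V^*$ combined with the vertex-swap of the Kronecker quiver: this sends $P_n$ to $I_n$ and bijects $Gr_{(a,b)}(P_n)$ with $Gr_{(n+1-b,\,n-a)}(I_n)$. Substituting the just-established formula at $(a',b')=(n+1-b,n-a)$ and reparameterising yields exactly ${n-a\brack n-b}_{q^2}{b+1\brack a}_{q^2}$.

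The main obstacle is the summation step. Individually, each isomorphism class contributes a ratio that is perfectly computable, but verifying that the full sum over all compositions of $a$ into $b-a$ non-negative parts telescopes into a clean two-factor product requires a careful Gaussian-binomial identity (essentially $q$-Vandermonde applied along the Durfee-type decomposition of partitions inside the $a\times(b-a)$ rectangle). This algebraic identity, rather than the module-theoretic setup, will be the technical heart of the argument.
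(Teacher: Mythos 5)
First, a point of comparison: the paper does not prove this statement at all --- it is imported verbatim from Sz\'ant\'o \cite[Theorems 4.1, 4.3]{sza} and then used as a black box to derive Proposition~\ref{th:XZ-formula}. So your proposal is necessarily a from-scratch argument, and it should be judged on its own. Its skeleton is sound: the explicit shift model of $P_n$ is the standard one; every subrepresentation of a preprojective Kronecker module is indeed preprojective (because $\mathrm{Hom}(R,P)=\mathrm{Hom}(I,P)=0$ for $R$ regular, $I$ preinjective, $P$ preprojective, so an embedded module can have no non-preprojective summand); orbit-counting by $|\mathrm{Hom}^{\mathrm{inj}}|/|\mathrm{Aut}|$ is correct; and the duality step checks out --- substituting $(a',b')=(n+1-b,\,n-a)$ into formula (2) does reproduce formula (1), so (2) follows from (1). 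Working over $\Fbb_2$ correctly accounts for the $q^2$-binomials.

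However, the two steps where all the content lives are asserted rather than carried out. First, $|\mathrm{Hom}^{\mathrm{inj}}(\bigoplus_i P_{k_i},P_n)|$ is \emph{not} a product over summands: injectivity of a tuple $(f_i)$ is a joint condition, and counting the injective locus inside $\mathrm{Hom}(\bigoplus_i P_{k_i},P_n)$ requires either an inclusion--exclusion over the possible kernels (themselves arbitrary preprojective submodules) or a filtration/echelon argument; the formula $\dim\mathrm{Hom}(P_k,P_\ell)=\max(0,\ell-k+1)$ alone does not deliver it. Second, the isomorphism classes are indexed by \emph{partitions} of $a$ into at most $b-a$ nonnegative parts (not compositions), and the claim that the resulting sum of ratios telescopes to ${n+1-a\choose n+1-b}_{q^2}{b-1\choose a}_{q^2}$ is essentially the theorem itself; no identity is exhibited, and ordinary $q$-Vandermonde is unlikely to suffice without first putting each summand in a usable closed form. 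For what it is worth, Sz\'ant\'o's actual proof avoids this by working recursively with Ringel--Hall numbers of the Kronecker algebra rather than summing over isomorphism types, which sidesteps both difficulties. As written, your argument is a plausible program, not a proof.
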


Notice that for $a\ge b$ there are no
subrepresentations of $P_n$ with dimension vector $(a,b)$ unless $a=0$ and
$b=0$.  For ${\bf e}=(0,0)$ we get $|Gr_{{\bf e}}(P_n)|=1$ and ${}^*{\bf
e}-{\bf e}^*-{}^*[P_n]=(n+2,-n-1)$.  So ${\bf e}=(0,0)$ gives the floating
term $X^{(n+2,-n-1)}$ in equation \eqref{eq:X-n-formula}.  All remaining
dimension vectors of subrepresentations of $P_n$ are of the form $(a,b)$ with
$a< b\leq n+1$.  Set $p=a$ and $r=n+1-b$, so we get $r+p=a+n+1-b\leq n$.  For ${\bf
e}=(a,b)$ we get the summand 
\begin{align*}
&q^{-\half d_{\bf e}^{P_n}}|Gr_{(a,b)}(P_n)|X^{(n+2-2b,2a-n-1)}\\
&=q^{(a-b)(n+1-b)}{n+1-a\choose n+1-b}_{q^2}q^{(a-b+1)a}{b-1\choose
a}_{q^2}X^{(n+2-2b,2a-n-1)}\\
&\textstyle ={n+1-a\brack n+1-b}_q{b-1\brack a}_qX^{(n+2-2b,2a-n-1)}\\
&\textstyle ={n+1-p\brack r}_q{n-r\brack p}_qX^{(2r-n,2p-n-1)}.
\end{align*}

Now applying Theorem~\ref{rank2} we get for $n\in\ZZ_{\ge0}$
\begin{align}\label{eq:n}
X_{-n}&=X_{P_n}=\sum_{{\bf e}} q^{-\half d_{{\bf e}}^{P_n}}|Gr_{{\bf e}}(P_n)|
X^{(n+2-2e_2,2e_1-n-1)}\\
&\nonumber=X^{(n+2,-n-1)} + \sum_{p + r \leq
n} {n-r \brack p}_q{n+1-p \brack r}_q X^{(2r-n,2p-n-1)}.
\end{align}

Notice that for $a>b$ there are no
subrepresentations of $I_n$ with dimension vector $(a,b)$ unless $a=n+1$ and
$b=n$.  For ${\bf e}=(n+1,n)$ we get $|Gr_{{\bf e}}(I_n)|=1$ and ${}^*{\bf
e}-{\bf e}^*-{}^*[I_n]=(-n-1,n+2)$.  So ${\bf e}=(n+1,n)$ gives the floating
term $X^{(-n-1,n+2)}$ in equation \eqref{eq:Xn-formula}.  All remaining
dimension vectors of subrepresentations of $I_n$ are of the form $(a,b)$ with
$a\leq b\leq n$.  Set $r=a$ and $p=n-b$, so we get $r+p=a+n-b\leq n$.  For ${\bf
e}=(a,b)$ we get the summand 
\begin{align*}
&q^{-\half d_{\bf e}^{I_n}}|Gr_{(a,b)}(I_n)|X^{(n-1-2b,2a-n)}\\
&=q^{(a-b)(n-b)}{n-a\choose n-b}_{q^2}q^{(a-b-1)a}{b+1\choose
a}_{q^2}X^{(n-1-2b,2a-n)}\\
&\textstyle ={n-a\brack n-b}_q{b+1\brack a}_qX^{(n-1-2b,2a-n)}\\
&\textstyle ={n-r\brack p}_q{n+1-p\brack r}_qX^{(2p-n-1,2r-n)}.
\end{align*}

Again applying Theorem~\ref{rank2} we get for $n\in\ZZ_{\ge0}$
\begin{align}\label{eq:n+3}
X_{n+3}&=X_{I_n}=\sum_{{\bf e}} q^{-\half d_{{\bf e}}^{I_n}}|Gr_{{\bf e}}(I_n)|
X^{(n-1-2e_2,2e_1-n)}\\
&\nonumber=X^{(-n-1,n+2)} + \sum_{p + r \leq
n} {n-r \brack p}_q{n+1-p \brack r}_q X^{(2p-n-1,2r-n)}.
\end{align}

It is known for general $q$ that the coefficients of $X_{-n}$ and $X_{n+3}$ in $\Acal_q(2,2)$ written in terms of the initial cluster $\{X_1,X_2\}$ are given by polynomials in $q$.  Now equations \eqref{eq:n} and \eqref{eq:n+3} are valid for infinitely many values of $q$, thus Proposition \ref{th:XZ-formula} holds for any $q$.

\section{Proof of Theorem~\ref{th:mutations}}\label{pf:mutations}

Let $\Fbb$ be the finite field with $q$ elements.  Suppose that vertex $i$ is a source in the valued quiver $Q=(B,D)$.  Note that the valued quiver $\mu_iQ$
is obtained from $Q$ by reversing all arrows at vertex $i$ so that $i$ is a sink
in $\mu_iQ$ and that the valuations of $\mu_iQ$ equal those of $Q$.  Let
$\Qcal$ denote the Grothendieck group of
$rep~Q$.  Recall that we identify $\Qcal$
with the root system associated to the Cartan counterpart of $B$ with
simple roots $\{\alpha_i\}$ where $\alpha_i=[S_i]\in\Qcal$.  We will abuse notation and also denote by $\Qcal$ the Grothendieck group of $rep~\mu_iQ$ with $\alpha_i=[S'_i]$.
We also denote by $\sigma_i$ the simple reflection associated to $\alpha_i$ in the
Weyl group of $\Qcal$.

Let $rep~Q\langle i\rangle$ denote the full subcategory of
$rep~Q$ of all representations of $Q$ which do not contain
$S_i$ as a direct summand.  In \cite{dlab} it is shown that the reflection
functors $$\Sbb_i^-: rep~Q\leftrightarrow
rep~\mu_iQ:\Sbb_i^+$$ restrict to inverse equivalences
of categories $$\Sbb_i^-: rep~Q\langle
i\rangle\leftrightarrow rep~\mu_iQ\langle
i\rangle:\Sbb_i^+.$$  Since it will be clear from context which to use we will drop the ${}^\pm$ and simply denote both functors by $\Sbb_i$. See \cite{dlab} for precise definitions of these
functors.  We will use the following result proved in \cite{dlab}.

\begin{lemma}\cite[Proposition 2.1]{dlab}\label{sigma-prop}\mbox{}
For $X\in rep~Q\langle i\rangle$ we have $[\Sbb_i X]=\sigma_i([X])$ and $\Sbb_i^2X=X$.
\end{lemma}

We now prove a recursion for the Grassmannians of $rep~Q$, denoted $Gr^Q$, in terms of the Grassmannians of $rep~Q\langle i\rangle$, denoted $Gr^{Q\langle i\rangle}$.  We will use the following convention for Grassmannians of $V\in rep~Q\langle i\rangle$: 
$$Gr_{\bf e}^{Q\langle i\rangle}(V)=\{0\to W\subset V\to V/W\to 0 : [W]={\bf e}; W,V/W\in rep~Q\langle i\rangle\}.$$

\begin{theorem}\label{th:recursion}
Let $M\in rep~\mu_iQ\langle i\rangle$ with $[M]={\bf m}$ and ${\bf e}\in\Qcal$ with ${\bf e}\le {\bf m}$.  Then we have
\[Gr_{\bf e}^{\mu_iQ}(M)=\coprod_{c\ge0} \Fbb^{d_ic(\sigma_i({
\bf e})_i+c)}\times Gr_{c\alpha_i}^Q((m_i-\sigma_i({\bf e})_i-e_i)S_i)\times Gr_{\sigma_i({\bf e}) +c\alpha_i}^Q(\Sbb_iM)\]
where $\sigma_i({\bf e})_j=(1-\delta_{ij})e_j+\delta_{ij}(\sum\limits_{\ell=1}^ne_\ell[b_{i\ell}]_+-e_i)$.
\end{theorem}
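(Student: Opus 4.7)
To prove the recursion I parameterize each side and match cardinalities using the $q$-Vandermonde identity.

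Since $i$ is a sink in $\mu_i Q$ and $M$ has no $S_i$ summand, the combined structure map $\phi:\bigoplus_j M_j\otimes A_{ji}\twoheadrightarrow M_i$ is surjective with kernel $K=(\Sbb_i M)_i$. A subrepresentation $W\subset M$ with $[W]={\bf e}$ is uniquely determined by subspaces $W_j\subset M_j$ of $K_j$-dimension $e_j$ (for $j\ne i$) together with a subspace $W_i\subset M_i$ of $K_i$-dimension $e_i$ containing $U:=\phi(\bigoplus_j W_j\otimes A_{ji})$. Since $i$ is a source in $Q$ one has $[b_{i\ell}]_+=b_{i\ell}$, hence $\dim_{K_i}\bigoplus_j W_j\otimes A_{ji}=\sum_\ell e_\ell b_{i\ell}=\sigma_i({\bf e})_i+e_i$. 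Setting $L:=K\cap\bigoplus_j W_j\otimes A_{ji}$ and $c:=e_i-\dim U$ this forces $\dim L=\sigma_i({\bf e})_i+c$, and the number of valid $W_i$'s for a fixed tuple $(W_j)_{j\ne i}$ is the Grassmannian count ${m_i-e_i+c\brack c}_{|K_i|}$, so $|Gr^{\mu_i Q}_{\bf e}(M)|=\sum_{(W_j)_{j\ne i}}{m_i-e_i+c\brack c}_{|K_i|}$.

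Next, since $i$ is a source in $Q$, a subrepresentation $W^\vee\subset\Sbb_iM$ is determined by $(W^\vee_j\subset M_j)_{j\ne i}$ together with $W^\vee_i\subset K\cap\bigoplus_j W^\vee_j\otimes A_{ji}$. Identifying $W^\vee_j\leftrightarrow W_j$ and imposing $[W^\vee]=\sigma_i({\bf e})+\widetilde c\,\alpha_i$, i.e.\ $\dim W^\vee_i=\sigma_i({\bf e})_i+\widetilde c$, gives ${\sigma_i({\bf e})_i+c\brack\sigma_i({\bf e})_i+\widetilde c}_{|K_i|}$ choices within each $(W_j)$-stratum. Substituting into the right-hand side, swapping the summations, and reading off the stratum of a single $(W_j)$-tuple reduces the theorem to the $q$-Vandermonde identity
\[
{m_i-e_i+c\brack c}_Q=\sum_{\widetilde c\ge 0}Q^{\widetilde c(\sigma_i({\bf e})_i+\widetilde c)}{m_i-\sigma_i({\bf e})_i-e_i\brack\widetilde c}_Q{\sigma_i({\bf e})_i+c\brack\sigma_i({\bf e})_i+\widetilde c}_Q,
\]
obtained from ${M+N\brack k}_Q=\sum_j Q^{j(N-k+j)}{M\brack j}_Q{N\brack k-j}_Q$ by setting $Q=|K_i|$, $M=m_i-\sigma_i({\bf e})_i-e_i$, $N=\sigma_i({\bf e})_i+c$, $k=c$, and $j=\widetilde c$.

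The main obstacle is that $M=m_i-\sigma_i({\bf e})_i-e_i$ can be negative whenever $\phi|_{\bigoplus W_j\otimes A_{ji}}$ has large kernel, in which case the Grassmannian $Gr_{\widetilde c\alpha_i}^Q((m_i-\sigma_i({\bf e})_i-e_i)S_i)$ has no literal set-theoretic meaning. One has to read the formula in this regime via the formal product expansion ${M\brack\widetilde c}_Q=\prod_{\ell=1}^{\widetilde c}(Q^{M-\ell+1}-1)/(Q^\ell-1)$, under which $q$-Vandermonde persists as a Laurent-polynomial identity in $Q$; handling this formal extension carefully is what makes the bijective combinatorics nontrivial and constitutes the principal technical point.
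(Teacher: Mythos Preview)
Your approach is sound at the cardinality level but differs from the paper's. The paper establishes the decomposition bijectively via three intermediate lemmas: (i) for the sink $i$ in $\mu_iQ$, every subrepresentation of $M$ splits canonically as $N\oplus aS_i'$ with $N\in\mu_iQ\langle i\rangle$, yielding $Gr_{\bf e}^{\mu_iQ}(M)=\coprod_a Gr_a((m_i-e_i+a)S_i')\times Gr_{{\bf e}-a\alpha_i}^{\mu_iQ\langle i\rangle}(M)$; (ii) the dual statement at the source $i$ in $Q$, obtained via the linear duality $rep~Q\leftrightarrow rep~Q^*$; and (iii) the classical decomposition $Gr_\ell(V\oplus W)=\coprod_{a+b=\ell}\Fbb^{a(\dim W-b)}\times Gr_a(V)\times Gr_b(W)$. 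The reflection equivalence $\Sbb_i$ then identifies the $\langle i\rangle$-Grassmannians of $M$ and of $\Sbb_iM$, and (iii) collapses the two simple-Grassmannian factors into $Gr_a((m_i-e_i+a)S_i)$. Your stratification by $(W_j)_{j\ne i}$ and reduction to $q$-Vandermonde is exactly the cardinality shadow of this chain: the stratum-by-stratum matching is the $\Sbb_i$-equivalence, and $q$-Vandermonde plays the role of (iii). The paper's route buys an explicit bijection and isolates the intermediate $\langle i\rangle$-Grassmannians; yours is more direct but only matches counts.

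Your flag on the regime $m_i-\sigma_i({\bf e})_i-e_i<0$ is well taken. The same issue is present in the paper's application of (iii), since one cannot literally form the vector space $(m_i-\sigma_i({\bf e})_i-e_i)S_i$ there, and the set-level statement is not literally correct in that range. Your formal-polynomial resolution is the right one for what is actually needed: the only downstream use of the theorem is the identity $A_{\bf e}^{\mu_iQ}(M)=\sum_{c\ge0}{m_i-\sigma_i({\bf e})_i-e_i\brack c}_{q^{d_i/2}}A_{\sigma_i({\bf e})+c\alpha_i}^Q(\Sbb_iM)$ with the product-formula (symmetrized) $q$-binomial, and that identity does hold for all integer values of the upper index precisely by the Laurent-polynomial $q$-Vandermonde you invoke.
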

\begin{proof}
The main content of the proof is contained in the following lemmas.

\begin{lemma}\label{sink}
For $M\in rep~\mu_iQ\langle i\rangle$ with $[M]={\bf m}$ and ${\bf e}\in \Qcal$ with ${\bf e}\le{\bf m}$ we have $$Gr_{\bf e}^{\mu_iQ}(M)=\coprod_{a\ge0} 
Gr_{a\alpha_i}^{\mu_iQ}((m_i-e_i+a)S'_i)\times Gr_{{\bf e}-a\alpha_i}^{\mu_iQ\langle i\rangle}(M)$$.
\end{lemma}
\begin{proof}
 Consider the map $\zeta:Gr_{\bf e}^{\mu_iQ}(M)\to \coprod\limits_{a\ge0} Gr_{{\bf e}-a\alpha_i}^{\mu_iQ\langle i\rangle}(M)$ given by
$N\oplus aS'_i\mapsto N$.  This map is clearly surjective. Suppose $f:N\hookrightarrow M$ is an element of 
$Gr_{{\bf e}-a\alpha_i}^{\mu_iQ\langle i\rangle}(M)$.  The fibers of $\zeta$ are given by $\zeta^{-1}(N)=\{(f,g):N\oplus aS'_i\hookrightarrow M\}
=\{g:aS'_i\hookrightarrow M/N\}=Gr_{a\alpha_i}^{\mu_iQ}((m_i-e_i+a)S'_i)$.  The result follows.
\end{proof}

\begin{lemma}
 For $M\in rep~Q\langle i\rangle$ with $[M]={\bf m}$ and ${\bf e}\in \Qcal$ with ${\bf e}\le{\bf m}$ we have $$Gr_{\bf e}^Q(M)=\coprod_{d\ge0} 
Gr_{d\alpha_i}^Q((e_i+d)S_i)\times Gr_{{\bf e}+d\alpha_i}^{Q\langle i\rangle}(M)$$.
\end{lemma}
\begin{proof}
 Let $Q^*$ denote the quiver obtained from $Q$ by reversing all the arrows.  Note that vertex $i$ is a sink in $Q^*$.  We will use the same notation for the linear duality functor $?^*=Hom(?,\Fbb):rep~Q\to rep~Q^*$.  The following equalities are immediate: 
\begin{align*} 
Gr_{\bf e}^Q(M)
&=Gr_{{\bf m}-{\bf e}}^{Q^*}(M^*)=\coprod\limits_{d\ge0} 
Gr_{d\alpha_i}^{Q^*}((e_i+d)S_i^*)\times Gr_{{\bf m}-{\bf e}-d\alpha_i}^{Q^*\langle i\rangle}(M^*)\\
&=\coprod\limits_{d\ge0} 
Gr_{e_i\alpha_i}^Q((e_i+d)S_i)\times Gr_{{\bf e}+d\alpha_i}^{Q\langle i\rangle}(M)\\
&=\coprod\limits_{d\ge0} 
Gr_{d\alpha_i}^Q((e_i+d)S_i)\times Gr_{{\bf e}+d\alpha_i}^{Q\langle i\rangle}(M).
\end{align*}
where the second equality follows from Lemma~\ref{sink}.
\end{proof}

The following result is well-known.

\begin{lemma}
 Let $\Fbb$ be a field, $V,W\in Vect_\Fbb$, and $\ell\in\ZZ_{>0}$.  Then $$Gr_\ell(V\oplus W)=\coprod\limits_{a+b=\ell} \Fbb^{a(w-b)}\times Gr_a(V)\times Gr_b(W).$$
\end{lemma}

Putting the preceding three lemmas together we get our recursion.  Let $a=c+d$ and consider the following:
\begin{align*}
&\coprod_{c\ge0} \Fbb^{d_ic(\sigma_i({
\bf e})_i+c)}\times Gr_{c\alpha_i}^Q((m_i-\sigma_i({\bf e})_i-e_i)S_i)\times Gr_{\sigma_i({\bf e}) +c\alpha_i}^Q(\Sbb_iM)\\
&=\coprod\limits_{c\ge0}\coprod\limits_{d\ge0} \Fbb^{d_ic(\sigma_i({
\bf e})_i+c)}\times Gr_{c\alpha_i}^Q((m_i-\sigma_i({\bf e})_i-e_i)S_i)\times Gr_{d\alpha_i}^Q((\sigma_i({\bf e})_i+(c+d)S_i)\\
&\quad\quad\times Gr_{\sigma_i({\bf e}) +(c+d)\alpha_i}^{Q\langle i\rangle}(\Sbb_iM)\\
&=\coprod\limits_{a\ge0}\coprod\limits_{c\ge0} \Fbb^{d_ic(\sigma_i({
\bf e})_i+c)}\times Gr_{c\alpha_i}^Q((m_i-\sigma_i({\bf e})_i-e_i)S_i)\times Gr_{(a-c)\alpha_i}^Q((\sigma_i({\bf e})_i+aS_i)\\
&\quad\quad\times Gr_{\sigma_i({\bf e}) +a\alpha_i}^{Q\langle i\rangle}(\Sbb_iM)\\
&=\coprod\limits_{a\ge0} Gr_{a\alpha_i}^Q((m_i-e_i+a)S_i)\times Gr_{\sigma_i({\bf e}) +a\alpha_i}^{Q\langle i\rangle}(\Sbb_iM)\\
&=\coprod\limits_{a\ge0} Gr_{a\alpha_i}^{\mu_iQ}((m_i-e_i+a)S'_i)\times Gr_{{\bf e}-a\alpha_i}^{\mu_iQ\langle i\rangle}(M)\\
&= Gr_{\bf e}^{\mu_iQ}(M).
\end{align*}
To see the second to last equality note that each of $Gr_{a\alpha_i}^Q((m_i-e_i+a)S_i)$ and 
$Gr_{a\alpha_i}^{\mu_iQ}((m_i-e_i+a)S'_i)$ is just the classical Grassmannian of vector subspaces $Gr_{a}(\Fbb_{d_i}^{m_i-e_i+a})$.
Also note that $Gr_{\sigma_i({\bf e}) +a\alpha_i}^{Q\langle i\rangle}(\Sbb_iM)=Gr_{{\bf e}-a\alpha_i}^{\mu_iQ\langle i\rangle}(M)$
under the equivalence $\Sbb_i$.
\end{proof}

We now show that the recursion on the Grassmannians just obtained matches the recursion in the quantum cluster algebra obtained by mutating the initial cluster, this will prove Theorem~\ref{tool}.
To simplify notation we will use $A_{\bf e}^{Q}(M)=q^{-\half\langle{\bf e},{\bf m}-{\bf e}\rangle}|Gr_{\bf e}^{Q}(M)|$.  We compute the normalized size of the sets in Theorem~\ref{th:recursion} to get: 
$$A_{\bf e}^{\mu_iQ}(M)=\sum_{c\ge0} {m_i-\sigma_i({\bf e})_i-e_i\brack c}_{q^{d_i/2}} A_{\sigma_i({\bf e})+c\alpha_i}^Q(\Sbb_iM).$$

Suppose $N\in rep~Q\langle i\rangle$.   First we expand $X_N$ via the formula \eqref{qcc-formula} to get an element of $\Acal_q(Q)$.
Then we mutate the initial cluster in direction $i$ to get an element of the quantum cluster algebra $\Acal_q(\mu_iQ)$ which turns out to be $X_{\Sbb_iN}$.  This result holds regardless of whether or not $X_N$ is a cluster variable.

This will immediately imply that the same property holds when $i$ is a sink in $Q$ and $M\in rep~Q\langle i\rangle$.  Indeed, assume the result holds when $i$ is a source.  If we begin with $X_{\Sbb_iM}$ in $\Acal_q(\mu_iQ)$ and mutate the initial cluster in direction $i$ we will get $X_{\Sbb_i\Sbb_iM}=X_M$ in the quantum cluster algebra $\Acal_q(Q)$.  But the mutation of clusters is involutive so starting with $X_M$ and mutating the initial cluster in direction $i$ gives $X_{\Sbb_iM}$.

We expand $X_N$ in terms of the initial seed $(\{X_1,\ldots,X'_i,\ldots,X_n\},Q)$:

\begin{align*}
X_N&=\sum_{\bf e} q^{-\half \langle {\bf e}, {\bf n} - {\bf e} \rangle}|Gr_{\bf
e}^Q(N)|X^{({}^*{\bf e}-{\bf e}^*-{}^*{\bf n})}=\sum_{\bf e} A_{\bf
e}^Q(N) X^{({}^*{\bf e}-{\bf e}^*-{}^*{\bf n})}.
\end{align*}

We apply equation \eqref{eq:cl_exp} with ${\bf c}={}^*{\bf e}-{\bf e}^*-{}^*{\bf n}$ to get $X_N$ in terms of the seed
$(\{X_1,\ldots,X_i,\ldots,X_n\},\mu_iQ)$.

\begin{align*}
&\sum_{\bf e} A_{\bf e}^Q(N) X^{({}^*{\bf e}-{\bf e}^*-{}^*{\bf n})}=\sum_{\bf e} \sum\limits_{r\ge0} {c_i \brack r}_{q^{d_i/2}} A_{\bf e}^Q(N) X^{\left(\sum\limits_{\ell=1}^n((1-\delta_{i\ell})c_\ell+(c_i-r)[-b_{\ell i}]_+-c_i\delta_{i\ell})\alpha_\ell\right)}
\end{align*}

Now we substitute ${\bf f}=\sigma_i({\bf
e})+r\alpha_i=\sum\limits_{\ell=1}^n(1-\delta_{i\ell})e_\ell\alpha_\ell+\delta_{
i\ell}\left(\sum_{m=1}^ne_m[b_{im}]_+-e_i+r\right)\alpha_i$ and $\sigma_i({\bf
n})=\sum\limits_{\ell=1}^n(1-\delta_{i\ell})n_\ell\alpha_\ell+\delta_{i\ell}
\left(\sum_{m=1}^nn_m[b_{im}]_+-n_i\right)\alpha_i$ then simplify to get:

\begin{align*}
&\sum_{\bf f} \sum_{r\ge0} {\sigma_i({\bf n})_i-\sigma_i({\bf
f})_i-e_i\brack r}_{q^{d_i/2}}A_{\sigma_i({\bf
f})+r\alpha_i}^Q(\Sbb_i\Sbb_iN)\times\\
&\quad \times X^{\left(\sum\limits_{\ell=1}^n(\sum_{m=1}^n(f_m[-b_{\ell m}']_++(\sigma_i({\bf
n})_m-f_m)[b_{\ell m}']_+)-\sigma_i({\bf n})_\ell)\alpha_\ell\right)}\\
&\nonumber=\sum_{\bf f} A_{\bf
f}^{\Sbb_iN}X^{({}^*{\bf f}-{\bf f}^*-{}^*\sigma_i({\bf n}))}=X_{\Sbb_iN}.
\end{align*}

This completes the proof of Theorem~\ref{tool}.
We now are ready to prove Theorem~\ref{th:mutations}.

\begin{lemma}\label{simple}
Let $Q$ be a valued quiver.  Inside the quantum cluster algebra $\Acal_{|\Fbb|}(Q)$, we have
$X_{[k;k]}^Q=X_{S_k}$ where $S_k$ is the simple representation associated to
vertex $k$ in $Q$.
\end{lemma}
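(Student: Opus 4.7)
The plan is to verify this by direct computation, since both sides only produce two monomial terms and a straightforward matching of exponents suffices. By definition, $X_{[k;k]}^Q = \mu_k X_{[k]}^{\mu_kQ}$, where $X_{[k]}^{\mu_kQ} = X^{(\alpha_k)}$ is the $k$-th initial cluster variable in $\Acal_q(\mu_kQ)$. On the other hand, $S_k$ has exactly two subrepresentations, namely $0$ and $S_k$ itself, so formula \eqref{qcc-formula} collapses to a two-term sum. I will compute both expressions explicitly and see that they coincide.

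First I would expand $\mu_k X^{(\alpha_k)}$ via equation \eqref{eq:cl_exp} with ${\bf c} = \alpha_k$. Since $c_k = 1$, only $p = 0$ and $p = 1$ contribute, both with coefficient $1$. A direct calculation gives $E\alpha_k = -\alpha_k + \sum_{j\ne k}[-b_{jk}]_+\,\alpha_j$ and $E\alpha_k + {\bf b}^k = -\alpha_k + \sum_{j\ne k}[b_{jk}]_+\,\alpha_j$ (using $b_{kk} = 0$ and $[-b_{jk}]_+ + b_{jk} = [b_{jk}]_+$), so
$$X_{[k;k]}^Q = X^{(-\alpha_k + \sum_{j\ne k}[-b_{jk}]_+\alpha_j)} + X^{(-\alpha_k + \sum_{j\ne k}[b_{jk}]_+\alpha_j)}.$$

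Next I would expand $X_{S_k}$ using \eqref{qcc-formula}. The two contributions come from ${\bf e} = 0$ (giving exponent $-{}^*\alpha_k$) and ${\bf e} = \alpha_k$ (giving exponent $B\alpha_k - {}^*\alpha_k = -\alpha_k^*$ via the identity $B{\bf e} = {}^*{\bf e} - {\bf e}^*$). The Euler form computation yields $\langle\alpha_k^\vee,\alpha_k\rangle = 1$ and $\langle\alpha_j^\vee,\alpha_k\rangle = -[b_{jk}]_+$ for $j\ne k$, giving ${}^*\alpha_k = \alpha_k - \sum_{j\ne k}[b_{jk}]_+\,\alpha_j$. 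Similarly, using the skew-symmetrizability identity $d_k b_{kj} = -d_j b_{jk}$, one finds $\langle\alpha_k,\alpha_j^\vee\rangle = -[-b_{jk}]_+$ for $j\ne k$ and hence $\alpha_k^* = \alpha_k - \sum_{j\ne k}[-b_{jk}]_+\,\alpha_j$. Substituting these into $X_{S_k}$ matches the expression for $X_{[k;k]}^Q$ term by term.

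There is no real obstacle here since the computation is mechanical; the only thing to be careful about is the sign bookkeeping in the passage between ${}^*\alpha_k$ and $\alpha_k^*$ via the skew-symmetrizability condition, and the fact that the ${p\brack p}_{q^{d_k/2}}$-binomials reduce to $1$ for $c_k = 1$.
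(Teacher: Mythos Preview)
Your proof is correct and follows essentially the same route as the paper: compute $X_{S_k}$ from \eqref{qcc-formula} using the only two subrepresentations $0$ and $S_k$, and identify the resulting two-term sum with the exchange relation defining $X_{[k;k]}^Q$. The paper is simply terser, writing out $X_{S_k}$ and noting that the last expression ``is just the exchange relation,'' whereas you additionally unwind $\mu_k X^{(\alpha_k)}$ via \eqref{eq:cl_exp} and check the ${}^*\alpha_k$, $\alpha_k^*$ computations explicitly; the content is the same.
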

\begin{proof}
First note that $S_k$ has only two subrepresentations $0$ and $S_k$.  So we have
$$X_{S_k}=X^{({}^*{\bf 0}-{\bf
0}^*-{}^*\alpha_k)}+X^{({}^*\alpha_k-\alpha_k^*-{}^*\alpha_k)}=X^{
(-\alpha_k+\sum\limits_{\ell=1}^n[b_{\ell
k}]_+\alpha_\ell)}+X^{(-\alpha_k+\sum\limits_{\ell=1}^n[-b_{\ell
k}]_+\alpha_\ell)}.$$
But the last expression is just the exchange relation defining $X_{[k;k]}$.
\end{proof}

Suppose the seed $({\bf X},Q)$, can be transformed into the
seed $({\bf X}',Q')$, by a sequence of mutations
in directions $k_1$, $k_2$, \ldots, $k_{r+1}$ such that the corresponding sequence of vertices is admissible in $Q$, i.e. $Q'=\mu_{k_{r+1}}\mu_{k_r}\cdots\mu_{k_1}Q$.

We start with the cluster variable $X_{k_{r+1}}'$ in $\Acal_q(Q')$.  This is the cluster variable 
$X_{[k_{r+1};k_{r+1}]}^{\mu_{k_{r+1}}Q'}$ in $\Acal_q(\mu_{k_{r+1}}Q')$.  By Lemma~\ref{simple} we can 
write $X_{[k_{r+1};k_{r+1}]}=X_{S_{k_{r+1}}}$ for $S_{k_{r+1}}\in rep~\mu_{k_{r+1}}Q'$.
Now assume inside the quantum cluster algebra 
$\Acal_q(\mu_{k_{i+1}}\cdots\mu_{k_{r+1}}Q')$ that we have 
$$X_{[k_{r+1};k_{i+1},\ldots, k_{r+1}]}=X_{\Sbb_{k_{i+1}}\cdots\Sbb_{k_r}(S_{k_{r+1}})}$$
for some $i\in[1,r]$, where 
$\Sbb_{k_{i+1}}\cdots\Sbb_{k_r}(S_{k_{r+1}})\in rep~\mu_{k_{i+1}}\cdots\mu_{k_{r+1}}Q'$.  
Notice that this representation is indecomposable and, since the sequence of vertices was admissible, it does not contain 
$S_{k_i}$ as a direct summand.  Thus mutating the initial cluster in direction $k_i$ gives 
$$X_{[k_{r+1};k_i,k_{i+1},\ldots, k_{r+1}]}=X_{\Sbb_{k_i}\Sbb_{k_{i+1}}\cdots\Sbb_{k_r}(S_{k_{r+1}})}$$ in the 
quantum cluster algebra $\Acal_q(\mu_{k_i}\mu_{k_{i+1}}\cdots\mu_{k_{r+1}}Q')$.
So by induction and the fact that mutations are involutive we have inside the quantum cluster algebra 
$\Acal_q(\mu_{k_1}\cdots\mu_{k_{r+1}}Q')=\Acal_q(Q)$ the equality 
$$X_{[k_{r+1};k_1,k_2,\ldots,k_{r+1}]}=X_{\Sbb_{k_1}\cdots\Sbb_{k_r}(S_{k_{r+1}})}$$ where 
$\Sbb_{k_1}\cdots\Sbb_{k_r}(S_{k_{r+1}})\in rep~\mu_{k_1}\cdots\mu_{k_{r+1}}Q'=rep~Q$.
\noindent This completes the proof.

\end{document}